\documentclass[reqno,10pt]{amsart}
\usepackage{tikz}
\usepackage{cite}
\usepackage{amsmath,mathrsfs,amssymb,amsthm}
\usepackage{graphicx}
\usepackage{caption}
\usepackage{appendix}
\usepackage{geometry}
\usepackage{listings}
\usepackage{pifont}
\usepackage{float}
\usepackage{hyperref}
\usepackage{enumitem}
\usepackage{graphics}
\usepackage{cleveref}
\numberwithin{equation}{section}
\usepackage{verbatim}
\let\xi\xi
\allowdisplaybreaks[1]

\newtheorem{thm}{Theorem}[section]
\newtheorem{lem}[thm]{Lemma}

\newtheorem{remark}[thm]{Remark}

\begin{document}
	
\title[ASYMPTOTIC  STABILITY OF PRANDTL EQUATIONS WITH SUCTION]{ASYMPTOTIC STABILITY OF STEADY PRANDTL EQUATIONS WITH SUCTION}

\author[Y. H. Li]{Yonghao Li}
\address[Y. H. Li]{Academy of Mathematics and Systems Science, Chinese Academy of Sciences, Beijing 100190, China; School of Mathematical Sciences, University of Chinese Academy of Sciences, Beijing 100049, China.}
\email{liyonghao@amss.ac.cn}
	
\author[Y. Wang]{Yong Wang}
\address[Y. Wang]{Academy of Mathematics and Systems Science, Chinese Academy of Sciences, Beijing 100190, China; School of Mathematical Sciences, University of Chinese Academy of Sciences, Beijing 100049, China.}
\email{yongwang@amss.ac.cn}

\begin{abstract}
Despite its physical importance, the stability of Prandtl boundary layers under wall suction remains relatively unexplored in rigorous PDE analysis. In this paper, we establish the global-in-$x$ asymptotic stability of a suction profile. For any prescribed algebraic decay rate in $x$, we prove that sufficiently small inflow perturbations with sufficiently rapid spatial decay yield downstream decay at that rate. The von Mises transformation reduces the Prandtl system to a degenerate parabolic equation with a favorably directed convection term. We first identify the downstream decay mechanism in an associated uniformly parabolic problem, then control the boundary degeneracy using a sharp Hardy-type estimate. Together, these arguments yield stability and decay for the original Prandtl system.
\end{abstract}
	
	
	\date{\today}
	\maketitle
	
	\setcounter{tocdepth}{1}
	\tableofcontents
	
	\thispagestyle{empty}

\section{Introduction}

\subsection{Background}
Boundary-layer separation and hydrodynamic instability are major concerns in viscous wall-bounded flows. These phenomena increase drag, reduce aerodynamic efficiency, and can even lead to catastrophic flow breakdown in aerospace and mechanical systems. As discussed in Schlichting-Gersten \cite{Schlichting} (see page~48), wall suction is an effective means of suppressing boundary-layer separation. It removes low-momentum fluid from the boundary layer
through narrow slits in the wall. Sufficiently strong suction can eliminate separation entirely. Wall suction was first employed by L.~Prandtl in his seminal 1904 work on boundary layers over a circular cylinder.

This paper focuses on boundary layer flows on the domain $\Omega = \mathbb{R}^+ \times \mathbb{R}^+$ governed by the  Prandtl system:
\begin{equation}\label{generalprandtl}
	\begin{cases}
		uu_{x}+vu_{y}-u_{yy}+p'(x)=0,\\
		u_{x}+v_{y}=0,\\
		u(0,y)=u_{0}(y),\quad u|_{y=0}=0,\quad v|_{y=0}=-a,\quad \lim\limits_{y\to\infty} u=U(x),
 	\end{cases}
\end{equation}
where the constant $a>0$ characterizes the strength of wall suction, $(u,v)$ denotes the two-dimensional velocity field, $p(x)$ represents the streamwise pressure distribution, and $U(x)$ is the outer inviscid free-stream velocity.

The pressure and free-stream velocity satisfy the Bernoulli relation
\begin{equation*}
	2p(x)+U^{2}(x)=\text{constant}.
\end{equation*}
To simplify the analysis without losing essential structural features of boundary layers, we set the far-field speed identically equal to unity, $U\equiv 1$, which forces $p'(x)\equiv 0$. Under this normalization, the original system \eqref{generalprandtl} reduces to the homogeneous pressureless Prandtl equation
\begin{equation}\label{prandtl}
	\begin{cases}
		uu_{x}+vu_{y}-u_{yy}=0,\\
		u_{x}+v_{y}=0,\\
		u(0,y)=u_{0}(y),\quad u|_{y=0}=0, \quad v|_{y=0}=-a,\quad \lim\limits_{y\to\infty}u=1.
	\end{cases}
\end{equation}

A standard and powerful tool for analyzing the Prandtl system is
the von Mises transformation, which reduces the system to a
degenerate parabolic equation in the streamwise coordinate $X$
and the stream-function coordinate $\xi$. The transformation is
defined by
\begin{equation}\label{von}
	X=x,\qquad
	\xi=\xi(x,y;u)=\int_0^y u(x,s)\,ds.
\end{equation}
Using the incompressibility condition and the wall condition
$v(x,0)=-a$, we obtain
\[
\frac{\partial \xi}{\partial y}=u,\qquad
\frac{\partial \xi}{\partial x}=-(v+a),\qquad
\xi(x,0;u)=0.
\]
Since $u>0$ for $y>0$ and $u(x,y)\to 1$ as $y\to\infty$,
the mapping $(x,y)\mapsto(X,\xi)$ is a global diffeomorphism
from $D$ onto $G$, where
\[
D=\{0<x<L,\ 0<y<\infty\}\quad \mbox{and}\quad  G=\{0<X<L,\ 0<\xi<\infty\}.
\]
The inverse transformation is written as
\begin{equation}\label{invervon}
x=X,\qquad y=y(X,\xi;u).
\end{equation}

We introduce the squared streamwise velocity as the new unknown:
\begin{equation}\label{unknown}
	w(X,\xi;u)=u^{2}\bigl(X,y(X,\xi;u)\bigr).
\end{equation}
Then the system \eqref{prandtl} reduces to the following
quasilinear degenerate parabolic problem:
\begin{equation}\label{vonprandtl}
	\begin{cases}
		w_X-a w_\xi-\sqrt{w}\,w_{\xi\xi}=0,\\
		w(X,0)=0,\quad
		\displaystyle\lim_{\xi\to\infty}w(X,\xi)=1,\\
		w(0,\xi)=w_0(\xi),\quad
		\displaystyle
		w_0\left(\int_0^y u_0(s)\,ds\right)
		=\bigl(u_0(y)\bigr)^2.
	\end{cases}
\end{equation}
Oleinik \cite{Oleinik} pioneered the existence and uniqueness
theory for the Prandtl equations using the von Mises
transformation. The global existence and uniqueness result
for classical solutions that we use, together with its precise
hypotheses, is stated in Theorem~\ref{oleinik}.

\begin{thm}\label{oleinik}\textup{(}\textbf{Oleinik}\textup{)}
	Assume that the initial data $ u_{0}(y) $ satisfies for some $\alpha\in(0,1):$
	\begin{equation}\label{monotony}
		\begin{aligned}
			&u_{0}(y)\in C^{2,\alpha}([0,\infty)),\quad u_{0}(0)=0,\quad u_{0}(y)>0 \text{ for all } y>0,\\
			&u_{0}'(0)>0,\quad u_{0}''(y)+au_{0}'(y)=O(y^{2}) \text{ as y $\to$ }0,\quad \lim_{y\to\infty}u_{0}(y)=1.
		\end{aligned}
	\end{equation}
	Then the Prandtl equation \eqref{prandtl} admits a unique global classical velocity field $ (u,v) $ obeying
	\begin{equation*}
		u(x,y)>0\ \text{for}\ y>0,\quad u\in C^{1}(\mathbb{R}^{+}\times \mathbb{R}^{+}),\quad v, u_{yy}\in C^{0}(\mathbb{R}^{+}\times\mathbb{R}^{+}),
	\end{equation*}
	and for any finite  $L>0$, there exists a constant $y_0>0$ such that,
	\begin{itemize}
		\item[(1)] $ u, u_{y}, u_{yy} $ are uniformly bounded and continuous in $ [0,L]\times \mathbb{R}^{+},$
		\item[(2)] $ u_{x}, v, v_{y} $ are locally bounded  and continuous in $ [0,L]\times\mathbb{R}^{+},$
		\item[(3)] $ \frac{\partial u}{\partial y}>m>0$ in $[0,L]\times [0,y_0]$, where $m$ is a positive  constant depending on $y_0$. 
	\end{itemize}
\end{thm}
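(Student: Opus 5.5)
We follow Oleinik's method and carry out the whole argument in von Mises variables, where \eqref{prandtl} becomes the degenerate quasilinear problem \eqref{vonprandtl}. The plan is to solve \eqref{vonprandtl} on $G_L=\{0<X<L,\ \xi>0\}$ for an arbitrary $L$ and then transform back through \eqref{invervon}. Since $L$ is arbitrary and uniqueness makes the solutions on nested strips agree, this yields a global-in-$x$ solution.

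First we translate the hypotheses \eqref{monotony} into statements about $w_0$. Because $u_0'(0)>0$, we have $\xi\sim u_0'(0)y^2/2$ near the wall, so $w_0(\xi)\sim 2u_0'(0)\xi$. In particular $w_0$ is Lipschitz near $\xi=0$ with $\partial_\xi w_0(0)>0$, $w_0>0$ for $\xi>0$, and $w_0\to1$ as $\xi\to\infty$.

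The compatibility condition $u_0''+au_0'=O(y^2)$ is exactly what makes the equation $w_X=aw_\xi+\sqrt w\,w_{\xi\xi}$ compatible at the corner $(0,0)$. To see this, note that $w_\xi=2u_y$ and $\sqrt w\,w_{\xi\xi}=2u_{yy}$, so the right-hand side at $X=0$ equals $2(u_0''+au_0')=O(y^2)=O(\xi)$. This gives a bound $|w_X(0,\xi)|\le C\xi$ near the wall.

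Next we regularize the degeneracy. We replace $\sqrt w$ by $\sqrt{w}+\varepsilon$, or equivalently we solve on $\{\xi>\varepsilon\}$ with boundary value $w=\varepsilon$, truncating at a large $\xi=N$. Each approximate problem is uniformly parabolic, so classical quasilinear theory (Schauder estimates and the Leray--Schauder fixed point) gives a solution $w^\varepsilon$. The main work is a set of estimates uniform in $\varepsilon$ and $N$, all obtained from the maximum principle:
\begin{enumerate}
\item $0\le w^\varepsilon\le C$.
\item Barrier bounds near the wall of the form $c\,\xi\le w^\varepsilon\le C\xi$ on $[0,L]\times[0,\xi_0]$. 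We build these from sub- and supersolutions such as $k_1\xi-k_2\xi^2$ modified by $e^{MX}$. The suction term $-a w_\xi$ has a favorable sign in these constructions.
\item A bound $|w^\varepsilon_X|\le C\xi$. We obtain it by differentiating the equation in $X$ and applying the comparison principle to $z=w_X$. This function satisfies $z_X=az_\xi+\sqrt w\,z_{\xi\xi}+\frac{z}{2\sqrt w}w_{\xi\xi}$, and we substitute $w_{\xi\xi}=(z-aw_\xi)/\sqrt w$ to avoid second derivatives. We then use the initial bound from the compatibility condition.
\item Bounds on $w_\xi$ and $\sqrt w\,w_{\xi\xi}$, which follow from the equation once $w_X$ is controlled.
\end{enumerate}

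We expect step (3) to be the main obstacle. The reason is the singular zeroth-order coefficient $w_{\xi\xi}/(2\sqrt w)\sim \xi^{-1/2}\cdot(\cdot)$. To handle it we would first try Oleinik's substitution $z=w_X\,e^{-MX}/\varphi(\xi)$ with a weight $\varphi\sim\xi$ near $0$, and check that the singular terms acquire a good sign. This check uses the lower barrier $w\ge c\xi$ and the favorable suction term.

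With these estimates in hand, we use Arzel\`a--Ascoli and interior Schauder estimates away from $\xi=0$ to pass to the limit $\varepsilon\to0$, $N\to\infty$. The limit $w$ solves \eqref{vonprandtl}, satisfies $c\xi\le w\le C\xi$ near the wall, and has $w_\xi$ bounded below there.

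Finally we convert back to the original variables. We set $u=\sqrt{w}$ and recover $y=\int_0^\xi w^{-1/2}\,d\eta$; this integral converges because $w\sim\xi$ near the wall. The vertical velocity is $v=-a-\partial_x\xi$. The near-wall bound $w_\xi\ge c$ gives $u_y=w_\xi/2\ge m>0$, which is property (3). The bounds on $w_\xi$ and $\sqrt w\,w_{\xi\xi}$ give (1). The bound $w_X=O(\xi)$ gives the local bounds on $u_x$, $v$ and $v_y$ in (2).

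Uniqueness follows from an $L^1$ or maximum-principle contraction for the difference of two solutions of \eqref{vonprandtl}. We use the monotonicity of $w\mapsto\sqrt w$ and the near-wall bounds, which make the ratio $(w_1-w_2)/\xi$ well behaved.
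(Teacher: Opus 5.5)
The paper gives no proof of this theorem; it quotes it from Oleinik--Samokhin~\cite{Oleinik}. Your outline follows the classical strategy behind that result. However, the step you single out as the crux rests on a false sign claim, so as written it does not go through.

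With suction the solution is strongly concave at the wall. The equation and $w_X=O(\xi)$ give $w_{\xi\xi}=(w_X-aw_\xi)/\sqrt w\approx-a\sqrt\kappa\,\xi^{-1/2}$, i.e.\ $w\approx\kappa\xi-\tfrac43a\sqrt\kappa\,\xi^{3/2}$ with $\kappa=w_\xi(X,0)$. In the equation for $z=w_X$, the coefficient $\frac{z-aw_\xi}{2w}\approx-\frac{a}{2\xi}$ does have the good sign. But for your weighted quantity $\zeta=e^{-MX}z/\varphi$ the zeroth-order coefficient is
\[
a\frac{\varphi'}{\varphi}+\sqrt w\,\frac{\varphi''}{\varphi}+\frac{z-aw_\xi}{2w}-M .
\]
Take $\varphi=\xi$, or any $\varphi$ with $\varphi(0)=0$, $\varphi'(0)>0$ and $\varphi''$ bounded. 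Then this coefficient is $\approx\frac a\xi-\frac a{2\xi}=\frac a{2\xi}\to+\infty$ at the wall. Dividing by the weight turns the transport term $az_\xi$ into a singular term of the wrong sign, and it outweighs the favorable one. Equivalently, $K\xi e^{MX}$ is not a supersolution for the linear operator $\partial_X-a\partial_\xi-\sqrt w\,\partial_{\xi\xi}-\frac{z-aw_\xi}{2w}$, since its residual tends to $-\frac a2Ke^{MX}$ as $\xi\to0$.

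Only the diffusion term rescues the estimate, and only if the weight is at least as concave as the solution. For $\varphi=\xi-k\xi^{3/2}$ one has $-\sqrt w\,\varphi''=\tfrac34k\sqrt w\,\xi^{-1/2}\ge\tfrac34k\sqrt c$ by the lower barrier $w\ge c\xi$. The residual at the wall then tends to $(\tfrac34k\sqrt\kappa-\tfrac a2)Ke^{MX}>0$ once $k>2a/(3\sqrt c)$. This weight is positive only for $\xi\lesssim k^{-2}$, so $w_X$ on an interior line $\xi=\xi_1$ must be supplied separately. So the lower barrier is what makes step (3) work; suction is the obstacle there, not an ally.

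The same obstruction affects the upper barrier in step (2). If $W(X,0)=0$ and $W_{\xi\xi}$ is bounded, then $W_X-aW_\xi-\sqrt w\,W_{\xi\xi}=-aW_\xi(X,0)<0$ on the wall. Hence $e^{MX}(k_1\xi-k_2\xi^2)$ cannot be a supersolution, and your sign remark holds only for the lower barrier. You have two options:
\begin{itemize}
\item use a $-k\xi^{3/2}$ correction there as well, once the lower bound is known; or
\item obtain $w\le C\xi$ a posteriori in step (4), by integrating $w_{\xi\xi}=(w_X-aw_\xi)/\sqrt w$ in $\xi$, where $a/\sqrt w\lesssim\xi^{-1/2}$ is integrable.
\end{itemize}
The resulting bound $|w_{\xi\xi}|\lesssim\xi^{-1/2}$ is also what gives $w_\xi\ge c/2$ near the wall, which property (3) needs. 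This lower bound does not follow from $w\ge c\xi$ alone.

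Finally, the regularization $\sqrt w\mapsto\sqrt w+\varepsilon$ with the original data is incompatible at the corner, again because of suction. The compatibility condition forces $u_0''(0)=-au_0'(0)\ne0$, so $w_{0\xi\xi}=2u_0''/u_0\sim-\xi^{-1/2}$. The initial value of $w^\varepsilon_X$ then contains the term $\varepsilon w_{0\xi\xi}$, which already violates $|w^\varepsilon_X|\le C\xi$ at $X=0$. The data therefore have to be modified near $\xi=0$ for each $\varepsilon$. Also, the two regularizations you list are not equivalent, and the boundary value $w=\varepsilon$ on $\xi=\varepsilon$ does not match $w_0(\varepsilon)\approx2u_0'(0)\varepsilon$.
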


\subsection{Related works}
The study of the Prandtl boundary layer equations dates back to
Prandtl's pioneering work \cite{Prandtl}; see also
\cite{E,Oleinik,Schlichting} for systematic treatments of the
theory. For monotone velocity profiles,
Oleinik-Samokhin~\cite{Oleinik} developed a classical
well-posedness theory for the steady Prandtl equations using
the von Mises transformation and the maximum principle.
In particular, their theory yields global-in-$x$ solutions under a
favorable pressure gradient, subject to the requisite assumptions
on the data. Higher regularity estimates for local solutions
were subsequently established in \cite{Guo-Iyer-CMP}, while
Wang-Zhang~\cite{WangZhang2021} proved global $C^\infty$
regularity for steady Prandtl solutions under a favorable
pressure gradient.

An important problem in the theory of the steady Prandtl equation is the $x$-asymptotic behavior of its solutions. In the
zero-pressure-gradient case, subject to the impermeable-wall condition
\[
u|_{y=0}=v|_{y=0}=0,
\]
the most distinguished solution is the self-similar Blasius profile
\[
\tilde{u}(x,y)
=
f'\left(\frac{y}{\sqrt{x+1}}\right),
\]
where $f$ solves
\[
\begin{cases}
	f''' + \dfrac12 f f'' =0,
	& \text{in }\mathbb{R}^{+},\\[2mm]
	f(0)=f'(0)=0,
	\qquad& f'(\infty)=1.
\end{cases}\]

Serrin~\cite{Serrin} first proved that the Blasius profile is
a self-similar attractor for Oleinik solutions. More precisely,
using the von Mises transformation and a maximum-principle
argument, he showed that
\[
\lim_{x\to\infty}
\|u(x,\cdot)-\tilde{u}(x,\cdot)\|_{L^\infty_y}=0.
\]
This result established asymptotic convergence without an
explicit convergence rate.

The first quantitative global-in-$x$ stability result for the
Blasius profile was obtained by Iyer~\cite{Iyer}. For sufficiently
small  perturbations in suitable weighted Sobolev
spaces, he derived weighted energy estimates in von Mises
coordinates and obtained, up to a small loss in the decay exponent,
\[
\|u(x,\cdot)-\tilde{u}(x,\cdot)\|_{L^\infty_y}
\lesssim (1+x)^{-\frac12+\delta},
\qquad \delta>0.
\]
His argument also yields decay estimates for higher-order
derivatives of the transformed perturbation. Wang-Zhang~\cite{Z. Zhang}
subsequently removed the smallness assumption under a stronger
localization condition at spatial infinity, using the comparison
principle and carefully constructed barrier functions to treat
general Oleinik solutions subject to that condition.

More recently, Jia-Lei-Yuan~\cite{Lei} identified a mechanism
leading to faster decay and established the sharp estimate
\[
\|u(x,\cdot)-\tilde{u}(x,\cdot)\|_{L^\infty_y}
\lesssim (1+x)^{-1}
\]
under suitable weighted assumptions on the initial perturbation.
Their proof combines weighted $L^1$ estimates, refined energy
inequalities, and a finite iteration scheme that successively
improves the decay rate. The analysis uses weighted $L^1$ control
of the perturbation and accounts for the boundary degeneracy
of the transformed equation. They also constructed a family of exact solutions showing that
the rate $(1+x)^{-1}$ cannot, in general, be improved. Thus,
this rate is optimal within the class considered in their work.
Their result also covers certain non-small initial data satisfying
polynomial localization conditions.

The $x$-asymptotic stability theory in the absence of wall suction has recently been extended beyond the
zero-pressure-gradient Blasius profile. Iyer\cite{Iyer2026FalknerSkan} studied the family of
favorable Falkner--Skan profiles and established quantitative
stability and scattering estimates for the stationary Prandtl
equations. These results clarify how a
favorable pressure gradient influences the downstream stabilization
of Prandtl solutions. 

In the opposite direction, an adverse pressure
gradient may lead to boundary-layer separation. The formation and
local structure of separation for the steady Prandtl equation were
studied by Dalibard-Masmoudi\cite{DM} and Shen-Wang-Zhang\cite{ShenWangZhang2021}. Together
with the favorable-pressure regularity result
\cite{WangZhang2021}, these works illustrate the fundamentally
different behaviors produced by favorable and adverse pressure
gradients.

\subsection{Formulation and main results} In the present paper, we consider the classical suction profile (see \cite{Schlichting})
\begin{align}
\bar{u}(y)=1-e^{-ay},\quad \bar{v}\equiv -a
\end{align}
which indeed solves the  Prandtl system \eqref{prandtl} exactly. Following \cite{Iyer}, we introduce a scaled perturbation variable $\phi$ via
\begin{align}
	\epsilon \phi := u^2(X,y(X,\xi;u))-\bar{u}^2(X,y(X,\xi;\bar{u})),
\end{align}
then  $\phi$ satisfies a  degenerate parabolic equation
\begin{equation}\label{eqphi}
	\begin{cases}
		\phi_{X}-a\phi_\xi-u\phi_{\xi\xi}+A\phi=0,\quad \text{in}\ \mathbb{R}^{+}\times\mathbb{R}^{+},\\
		\phi(0,\xi)=\phi_{0}(\xi),\quad \phi(X,0)=\phi(X,\infty)=0,\\
		A(X,\xi)=\displaystyle\frac{-2\bar{u}_{yy}}{\bar{u}(u+\bar{u})}\bigg|_{(X,\xi)},
	\end{cases}
\end{equation}
where the initial perturbation is given by
\begin{align}\label{phi_0}
	\phi_{0}(\xi)=\frac{u_{0}^2\big(y(\xi;u_0)\big)-\bar{u}^2\big(y(\xi;\bar{u})\big)}{\epsilon}.
\end{align}
We also denote
\begin{align}
	\epsilon\rho(X,\xi) :=u\big(X,y(X,\xi;u)\big)-\bar{u}\big(y(\xi;\bar{u})\big).
\end{align}

For later use, we define
\begin{equation}\label{Y0,Y1}
	\begin{aligned}
		\|\phi\|_{\mathbf{Y}_0}^2
		&= 
		\left\|
		\frac{\phi_\xi}{u^{\lambda-\frac{1}{2}}}
		\right\|_{L^2_{X, \xi }}^2+ \ \sup_{X \ge 0}
		\left\|
		\frac{\phi}{u^{\lambda}}
		\right\|_{L^2_{\xi}}^2, \\[1ex]
		\|\phi\|_{\mathbf{Y}_1}^2
		&= 
		\left\|
		\frac{\phi_X}{\sqrt{u}}
		\right\|_{L^2_{X , \xi }}^2 + \ \sup_{X \ge 0}
		\left\|
		\phi_\xi
		\right\|_{L^2_{\xi }}^2 ,
	\end{aligned}
\end{equation}
and
\begin{align}\label{Y}
	\|\phi\|_{\mathbf{Y}}^2 = \|\phi\|_{\mathbf{Y}_0}^2 + \|\phi\|_{\mathbf{Y}_1}^2,
\end{align}
where $\lambda$ $\in$ $[1,\frac{3}{2})$.

For initial data, we introduce the corresponding initial norms:
\begin{equation}\label{F0,0}
	\begin{aligned}
		\|\phi_{0}\|_{\mathbf{Y}_{0,0}}^{2}
		&= \int \frac{|\phi_{0}(\xi)|^{2}}{u_{0}^{2\lambda}} \mathrm{d}\xi,\\
		\|\phi_{0}\|_{\mathbf{Y}_{1,0}}^{2}
		&= \int \big|\phi_{0}'(\xi)\big|^{2} \mathrm{d}\xi,\\
		\|\phi_{0}\|_{\mathbf{Y}_{2,0}}^{2}&=\int_{0}^{+\infty}\frac{|\phi_{X}(0,\xi)|^2}{u_0} d\xi,\\
		\|\phi_{0}\|_{\mathbf{Y}_{in}}^{2}
		&=\|\phi_{0}\|_{\mathbf{Y}_{0,0}}^{2}+	\|\phi_{0}\|_{\mathbf{Y}_{1,0}}^{2}+
		\|\phi_{0}\|_{\mathbf{Y}_{2,0}}^{2}.
	\end{aligned}
\end{equation}
We remark that $\phi_{0}'(\xi)$ is given by
\begin{equation}
		\begin{aligned}
		&	\epsilon \phi_{0}'(\xi)
			= 2(\partial_{y}u_0)\big(y(\xi;u_0)\big) - 2(\partial_{y}\bar{u})\big(y(\xi;\bar{u})\big).
		\end{aligned}
\end{equation}
And by the compatibility condition, $\phi_{X}(0,\xi)$ is given by
\begin{equation}
	\begin{aligned}
		\epsilon\phi_{X}(0,\xi)
	= \big(2a u_{0y}(y(\xi;u_0)) + 2u_{0yy}(y(\xi;u_0))\big)
	- \big(2a \bar{u}_{y}(y(\xi;\bar{u})) + 2\bar{u}_{yy}(y(\xi;\bar{u}))\big).
	\end{aligned}
\end{equation}

Our main result is:
\begin{thm}\label{Main}
Let $\phi$ be the solution of \eqref{eqphi}. Assume \eqref{monotony}. There exists a sufficiently small
	constant $\epsilon>0$ such that, for every $a>0$, if the
	initial datum $u_0(y)$ in physical coordinates satisfies
	\begin{align}\label{condition}
		\|u_0-\bar{u}\|_{L_y^1}
		+ \|u_0-\bar{u}\|_{H_y^1}
		+ \|\partial_y(u_0-\bar{u})\|_{L_y^\infty}
		\leq \frac{a}{(1+a)^2}\epsilon,
	\end{align}
then the following estimates hold:
	\begin{align}
		\|\phi\|_{\mathbf{Y}_0}
		&\lesssim
		\|\phi_0\|_{\mathbf{Y}_{0,0}},
		\label{esimate in Y0}\\
		\|\phi\|_{\mathbf{Y}_1}
		&\lesssim
		a\|\phi_0\|_{\mathbf{Y}_{0,0}}
		+ \|\phi_0\|_{\mathbf{Y}_{1,0}}.
		\label{estimate in Y1}
	\end{align}
\end{thm}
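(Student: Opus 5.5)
We plan a bootstrap built on two weighted energy estimates for \eqref{eqphi}, with the constant $\epsilon$ in \eqref{condition} closing the continuity argument. First, from \eqref{condition} and Theorem~\ref{oleinik} we will show that $u$ stays comparable to $\bar u$ in von Mises variables, i.e.
\begin{equation*}
\tfrac12\bar u\le u\le 2\bar u
\end{equation*}
at equal $\xi$, as long as $\|\phi\|_{\mathbf{Y}}$ is small. This uses $\epsilon\rho=u-\bar u$ and $u^2-\bar u^2=\epsilon\phi$, together with the Hardy-type bound $|\phi|\lesssim \xi^{1/2}\|\phi_\xi\|_{L^2_\xi}$. Near $\xi=0$ we have $\bar u\sim\sqrt{2a\xi}$, since $\bar u^2=1-e^{-2a\xi}$ up to normalization by $a$ (in fact $\xi=\int\bar u\,dy$ gives $\bar u_y=a(1-\bar u)$, and then $\bar u^2\approx 2a\xi$ near $0$). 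This comparability makes $A=-2\bar u_{yy}/(\bar u(u+\bar u))=2a^2(1-\bar u)/(\bar u(u+\bar u))\ge 0$, with $A\sim a^{3/2}\xi^{-1}$ near the wall. The factor $a/(1+a)^2$ in \eqref{condition} is exactly what is needed to convert the physical-coordinate smallness into smallness of $\|\phi_0\|_{\mathbf{Y}_{in}}$ and of the perturbation in these weights, uniformly in $a$.

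\textbf{Step 1 ($\mathbf{Y}_0$ estimate).} We multiply \eqref{eqphi} by $\phi/u^{2\lambda}$ and integrate in $\xi$. The diffusion term gives
\begin{equation*}
\int \frac{\phi_\xi^2}{u^{2\lambda-1}} + (\text{commutator}) \int \frac{\phi\,\phi_\xi\, u_\xi}{u^{2\lambda}}.
\end{equation*}
Since $u_\xi=u_y/u$, the commutator is of size $\int \phi\phi_\xi/u^{2\lambda+1}$. We control it by the sharp Hardy inequality
\begin{equation*}
\int \frac{\phi^2}{\xi^{\lambda+1/2}}\le C_\lambda\int\frac{\phi_\xi^2}{\xi^{\lambda-1/2}},
\end{equation*}
which is valid because $\lambda<3/2$ and $\phi(X,0)=0$; with $u^2\sim\xi$ this weight pair is exactly right. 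Part of the commutator has a good sign and is combined with $A\phi^2/u^{2\lambda}\ge0$. The $X$-derivative term produces $\frac{d}{dX}\int\phi^2/u^{2\lambda}$ plus a term $\lambda\int\phi^2u_X/u^{2\lambda+1}$. From the equation, $u_X$ is small, of order $\epsilon$ times $\phi_X/u$, so this term is absorbed via the bootstrap. The convection term gives $-a\int\phi\phi_\xi u^{-2\lambda} = -\lambda a\int\phi^2 u_\xi u^{-2\lambda-1}$ after integration by parts, which has the favorable sign because $u_\xi>0$. We check that the constants add to a positive multiple of $\int\phi_\xi^2/u^{2\lambda-1}$ for $\lambda\in[1,3/2)$. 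This yields \eqref{esimate in Y0}.

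\textbf{Step 2 ($\mathbf{Y}_1$ estimate).} We multiply by $\phi_X/u$. The term $u\phi_{\xi\xi}\cdot\phi_X/u$ becomes $\frac12\frac{d}{dX}\int\phi_\xi^2$ after integrating by parts, with no weight commutator thanks to the choice of multiplier, and the first term gives $\int\phi_X^2/u$. The convection term $a\int\phi_\xi\phi_X/u$ is bounded by $\frac12\int\phi_X^2/u+\frac{a^2}{2}\int\phi_\xi^2/u$. Since $u^{-1}\le u^{-(2\lambda-1)}$ for $\lambda\ge1$, the last integral is controlled by Step~1, which produces the factor $a\|\phi_0\|_{\mathbf{Y}_{0,0}}$. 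The $A\phi$ term is handled similarly, via $|A|\lesssim u^{-2}$ and Hardy. The nonlinearity hidden in $u$, $A$, $u_X$ is of size $\epsilon\|\phi\|_{\mathbf{Y}}$ and is absorbed.

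The main obstacle is Step~1. There the degeneracy $u\sim\sqrt\xi$ makes the commutator coming from the weight $u^{-2\lambda}$ borderline, and it must be absorbed using the sharp Hardy constant. This is where the restriction $\lambda<3/2$ enters, and I would first try to verify the sign through explicit computation with $\bar u$ before perturbing to $u$.
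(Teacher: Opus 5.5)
Your Step~2 is essentially the paper's Lemma~\ref{lem4}, once the $\mathbf{Y}_0$ bound and the basic energy bound are available. However, the comparability step and Step~1 both have genuine gaps.

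\textbf{Comparability.} The bound $u\sim\bar u$ (with $|\rho|\lesssim\bar u$) underlies every later Hardy manipulation, and it cannot be obtained by bootstrapping $\|\phi\|_{\mathbf{Y}}$. Near the wall $\bar u^2\approx 2a\xi$ vanishes linearly. The bound $|\phi|\lesssim\xi^{1/2}\|\phi_\xi\|_{L^2_\xi}$ only gives $\epsilon|\phi|/\bar u^2\lesssim\epsilon\|\phi_\xi\|_{L^2_\xi}/(a\xi^{1/2})$, which is unbounded as $\xi\to0$. No component of $\mathbf{Y}$ yields $|\phi|\lesssim\xi$ uniformly in $X$. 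The paper (Lemma~\ref{u0simbaru}) does not bootstrap. It first proves $|u_0^2-\bar u^2|\lesssim\epsilon\bar u^2$ at $X=0$; this, not smallness of $\|\phi_0\|_{\mathbf{Y}_{in}}$, is where the factor $a/(1+a)^2$ enters, to keep constants independent of $a$. It then propagates $|\phi|\lesssim\bar u^2$ by the maximum principle; for instance, $(1\pm C\epsilon)\bar u^2$ are super/subsolutions of the $w$-equation because $\bar u_{yy}<0$.

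\textbf{Step~1.} Your sign bookkeeping is wrong, and this hides a missing ingredient. For a weight decreasing in $\xi$, the transport moves mass toward the wall, so it is unfavorable. In the linearized problem, using $a\bar u_y=-\bar u_{yy}$, your convection term becomes $-\lambda a\int\phi^2\bar u_\xi\bar u^{-2\lambda-1}=\lambda\int\bar u_{yy}\phi^2\bar u^{-2\lambda-2}\le0$ on the left. It cancels the $A$-term exactly at $\lambda=1$ and outweighs it for $\lambda>1$. The commutator, after integrating by parts, equals $\frac{2\lambda-1}{2}\int\phi^2\big[(2\lambda+1)\bar u_y^2\bar u^{-2\lambda-3}+|\bar u_{yy}|\bar u^{-2\lambda-2}\big]\ge0$ on the right, so no part of it helps. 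The linearized identity is
\begin{equation*}
\frac12\frac{d}{dX}\int\frac{\phi^2}{\bar u^{2\lambda}}+\int\frac{\phi_\xi^2}{\bar u^{2\lambda-1}}=(2\lambda-1)\int\frac{\bar u_y\phi\phi_\xi}{\bar u^{2\lambda+1}}+(\lambda-1)\int\frac{|\bar u_{yy}|\phi^2}{\bar u^{2\lambda+2}},
\end{equation*}
and the weighted dissipation is the only good term.

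Near the wall, the first term on the right is absorbed with factor $\frac{2(2\lambda-1)}{2\lambda+1}<1$. This uses $\int f^2\xi^{-\lambda-3/2}\le\frac{16}{(2\lambda+1)^2}\int f_\xi^2\xi^{-\lambda+1/2}$ (your Hardy inequality has the wrong exponent) together with $\bar u^2/(2a\xi)\to1$, which forces the cutoff $\chi_{\delta/a}$ of Lemma~\ref{Lower base}. The $(\lambda-1)$ term is one power of $\bar u$ less singular and costs only $O(\sqrt\delta)$.

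The far-field and cutoff remainders, however, are bounded only by $C(\delta)\int A\phi^2$. Its $X$-integral must come from the \emph{unweighted} estimate with multiplier $\phi$ (Lemma~\ref{lem2}), where convection integrates to zero and $A\phi^2$ survives as dissipation. Your plan contains no such estimate, so Step~1 does not close.

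There is also a problem with your choice of weight. Using $u^{-2\lambda}$ instead of the $X$-independent $\bar u^{-2\lambda}$ creates the term $\lambda\int\phi^2u_Xu^{-2\lambda-1}$. This ties $\mathbf{Y}_0$ to $\|\phi_X/\sqrt u\|_{L^2_{X,\xi}}$ and would put $\|\phi_0\|_{\mathbf{Y}_{1,0}}$ into the right side of \eqref{esimate in Y0}. The paper avoids this by working with $\bar u$-weights and converting at the end via $u\sim\bar u$.
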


\begin{remark}
	The $a$-dependent constraint \eqref{condition} is critical to guarantee uniform asymptotic equivalence $u\sim\bar{u}$ with respect to the suction parameter $a>0$ in von Mises coordinates; detailed justifications can be found in Lemma \ref{u0simbaru}.
\end{remark}
\begin{remark}
	It is worth noting that the condition \eqref{condition} is sufficient to guarantee that $\|\phi_{0}\|_{\mathbf{Y}_{0,0}}$ and $\|\phi_0\|_{\mathbf{Y}_{1,0}}$ are finite, see Lemma \ref{phi0} in Appendix A for details of proof.
\end{remark}

Our second result, Theorem~\ref{Main2}, gives explicit pointwise
convergence rates of $u$ to $\bar{u}$ in von Mises coordinates.

\begin{thm}\label{Main2}
	Assume \eqref{monotony}. There exists a sufficiently small
	constant $\epsilon>0$ such that, for every $a>0$ and every
	positive integer $m$, if the initial datum $u_0(y)$ in physical
	coordinates satisfies \eqref{condition} and
	\begin{align}\label{weight}
		\|(1+\xi)^{m/2}\phi_0\|_{\mathbf{Y}_{0,0}}<\infty,
	\end{align}
	then, for all $X>0$, the following estimate holds:
	\begin{equation}
		\begin{aligned}
			&\sup_{\xi\geq 0}
			\left|
			u\bigl(X,y(X,\xi;u)\bigr)
			-\bar{u}\bigl(y(\xi;\bar{u})\bigr)
			\right|\\
			&\leq
			\epsilon C_{m,a}
			\left(
			\|(1+\xi)^{m/2}\phi_0\|_{\mathbf{Y}_{0,0}}
			+\|\phi_0\|_{\mathbf{Y}_{1,0}}
			\right)
			(1+X)^{-\frac{m}{2}},
		\end{aligned}
	\end{equation}
	where $C_{m,a}$ depends only on $m$ and $a$.
\end{thm}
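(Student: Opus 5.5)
We prove Theorem~\ref{Main2} by combining the global energy bounds of Theorem~\ref{Main} with weighted-in-$\xi$ energy estimates that convert spatial decay of $\phi_0$ into decay in $X$. The mechanism is the favorably directed convection term $-a\phi_\xi$. Multiplying \eqref{eqphi} by $(1+\xi)^{k}\phi/u^{2\lambda}$ and integrating by parts in $\xi$, the convection term yields
\[
\int -a\phi_\xi(1+\xi)^k\frac{\phi}{u^{2\lambda}}\,d\xi=\frac{a}{2}\int\Big(k(1+\xi)^{k-1}u^{-2\lambda}-(1+\xi)^k(u^{-2\lambda})_\xi\Big)\phi^2\,d\xi .
\]
The first term is a good sign and produces dissipation at weight $k-1$. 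The second term is controlled because $u_\xi$ decays exponentially, with $u\sim\bar u$ by Lemma~\ref{u0simbaru}, so it is supported essentially near the wall. The diffusion term gives $\int (1+\xi)^k\phi_\xi^2u^{1-2\lambda}$ plus commutator terms with $(1+\xi)^{k-1}$. Near the wall these are absorbed by a Hardy-type inequality in the degenerate weight, which is valid since $\lambda<3/2$. The potential $A\phi$ is handled as in the proof of Theorem~\ref{Main}: $A$ is positive and exponentially localized. Thus for $E_k(X)=\|(1+\xi)^{k/2}\phi/u^\lambda\|_{L^2_\xi}^2$ we aim for
\[
\frac{d}{dX}E_k+c\,a k\,E_{k-1}\le C_{k,a}\,E_0\quad(\text{or a localized term bounded by }\|\phi_\xi u^{1/2-\lambda}\|^2),
\]
for $0\le k\le m$.

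Next comes the iteration producing decay, in the spirit of Jia--Lei--Yuan. Set $k=m$ and use $E_m(0)<\infty$ by \eqref{weight}. Then multiply by time weights: for $j=0,\dots,m$, show $(1+X)^{j}E_{m-j}(X)+\int_0^X(1+s)^{j}E_{m-j-1}\,ds\lesssim E_m(0)+\cdots$ by induction on $j$. The derivative of the time weight contributes $j(1+X)^{j-1}E_{m-j}$, which is integrable by the previous step. At $j=m$ this gives $E_0(X)\lesssim(1+X)^{-m}$. Localized error terms are absorbed by the $L^2_{X,\xi}$ dissipation from Theorem~\ref{Main}.

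The same time-weighted scheme is then applied to the $\mathbf{Y}_1$-level energy $\|\phi_\xi\|_{L^2_\xi}^2$. Multiply the equation by $-\phi_{\xi\xi}$, or use $\phi_X$, and attach the factor $(1+X)^{m}$. The commutator $m(1+X)^{m-1}\|\phi_\xi\|^2$ is bounded by the weighted dissipation $\int(1+X)^{m-1}\|\phi_\xi u^{1/2-\lambda}\|^2$ obtained above, since $u\le C$. This yields $\|\phi_\xi(X)\|_{L^2}\lesssim(1+X)^{-m/2}$. Interpolation $\|\phi\|_{L^\infty}^2\le 2\|\phi\|_{L^2}\|\phi_\xi\|_{L^2}$ then gives $\sup_\xi|\phi|\lesssim(1+X)^{-m/2}$. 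Finally $\epsilon\rho=(u^2-\bar u^2)/(u+\bar u)=\epsilon\phi/(u+\bar u)$. Away from the wall $u+\bar u\gtrsim1$. Near $\xi=0$, write $u^2\sim\xi$ and use $|\phi|\le\sqrt\xi\,\|\phi_\xi\|_{L^2}$, so $|\rho|\lesssim\|\phi_\xi\|_{L^2}$. This yields the stated bound with constants depending on $m,a$.

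The main obstacle is the closure of the weighted estimate near the wall. The growing weight $(1+\xi)^k$ is harmless there, but the terms $(1+\xi)^k(u^{-2\lambda})_\xi\phi^2$ and $a\,\phi_\xi\phi\,u^{-2\lambda}$ are singular, of order $\xi^{-\lambda-1/2}\phi^2$. These must be dominated by the degenerate dissipation $\int\phi_\xi^2u^{1-2\lambda}$ through the sharp Hardy inequality, with constants uniform in $a$; this is where \eqref{condition} and Lemma~\ref{u0simbaru} enter. I would first try to reuse verbatim the wall analysis from the proof of \eqref{esimate in Y0} with a cutoff $\chi(\xi)$ near the wall. Since the weights are bounded there, only the far-field part sees $(1+\xi)^k$, and there the equation is uniformly parabolic.
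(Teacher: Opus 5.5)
\textbf{The time-weighted iteration does not close.} Your overall architecture matches the paper's: a $\xi$-weighted energy, a time-weighted iteration that trades $\xi$-weights for $X$-decay, an $X^m$-weighted $\phi_X/u$ estimate for $\|\phi_\xi\|_{L^2_\xi}$, and finally $|\rho|\lesssim|\phi|/\sqrt{a\xi}+|\phi|$. The problem is that you run the iteration only at the singular level $E_k=\|(1+\xi)^{k/2}\phi/u^{\lambda}\|_{L^2_\xi}^2$ with $\lambda\ge 1$.

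At that level the energy inequality is never source-free. Several terms survive on the right with a large constant and no matching positive term on the left:
\begin{itemize}
\item the far-field remainders $\frac{C}{\delta}\|\cdot\|_{L^2_{\xi\ge\delta/a}}$ of Lemma \ref{Lower base};
\item the outer part of $\int \bar u_y|\phi\phi_\xi|/\bar u^{2\lambda+1}$;
\item for $\lambda>1$, the net term $(\lambda-1)\int|\bar u_{yy}|\phi^2/\bar u^{2\lambda+2}$.
\end{itemize}
Together they give a right-hand side $C(\delta)\int A\phi^2(1+\xi)^k$. Already for $k=0$ this is why Lemma \ref{lem3} needs Lemma \ref{lem2}. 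After multiplying by $(1+X)^j$, the source becomes $C\int_0^X(1+s)^j\int A\phi^2(1+\xi)^k$. Theorem \ref{Main} controls $\int\!\!\int A\phi^2$ and $\int\!\!\int\phi_\xi^2u^{1-2\lambda}$ only without time weight. So your claim that localized errors are absorbed by the $L^2_{X,\xi}$ dissipation of Theorem \ref{Main} fails for every $j\ge1$.

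Your alternative form, $\frac{d}{dX}E_k+cakE_{k-1}\le C_{k,a}E_0$, is worse. Its last step $k=0$ reads $\frac{d}{dX}E_0\le CE_0$. With the factor $(1+X)^m$ this requires $\int_0^\infty(1+X)^mE_0\,dX<\infty$, which is stronger than what you are trying to prove.

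\textbf{What is missing is a source-free base level.} The paper first multiplies \eqref{eqphi} by $\phi\,(b_{m,a}+\xi)^k$, with no $u$-weight (Lemma \ref{RATE2}).
\begin{itemize}
\item Both $\int A\phi^2(b_{m,a}+\xi)^k$ and $-\frac12\int\bar u_{\xi\xi}\phi^2(b_{m,a}+\xi)^k$ appear on the left with the good sign.
\item Since $\bar u_\xi\sim \bar u A/a$, the bad term $\frac k2\int\bar u_\xi\phi^2(b_{m,a}+\xi)^{k-1}$ is $\lesssim\frac{m}{ab_{m,a}}\int A\phi^2(b_{m,a}+\xi)^k$.
\item Hence the inequality closes with zero right-hand side.
\end{itemize}
Iterating it with $X^{m-k}$ (Lemma \ref{RATE3}) gives $\int\!\!\int X^{m-k}A\phi^2(b_{m,a}+\xi)^k\le C_{m,a}\|(1+\xi)^{m/2}\phi_0\|_{\mathbf{Y}_{0,0}}^2$. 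This is exactly what controls the source of the $\bar u^{-2}$-weighted estimate (Lemmas \ref{RATE4}--\ref{RATE5}). Lemma \ref{RATE5} in turn supplies the $\int\!\!\int X^m\phi_\xi^2/u$ needed in the $X^m\phi_X/u$ estimate. Equivalently, you could iterate $E_k+M\int\phi^2(b_{m,a}+\xi)^k$ with $M$ large.

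\textbf{A second, fixable point.} With the fixed offset $(1+\xi)^k$, the diffusion commutator $k\int(1+\xi)^{k-1}u^{1-2\lambda}\phi\phi_\xi$ can be absorbed directly into $\frac{ak}{2}E_{k-1}$ and the dissipation only if $ku/(a(1+\xi))\ll1$. This fails when $a\lesssim m$, which is why the paper uses $(b_{m,a}+\xi)^k$ with $b_{m,a}\gg m/a$.
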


\begin{remark}
	Under the remaining assumptions of Theorem~\ref{Main2},
	the weighted condition \eqref{weight} follows from the
	following condition on the initial datum in physical coordinates:
	\begin{align}\label{initcond4}
		\|y^{m/2}(u_0-\bar{u})\|_{L_y^2}<\infty,
	\end{align}
see Lemma \ref{phi0m} in Appendix A for details of proof.
\end{remark}

Our third result, Theorem~\ref{Main3}, transfers the convergence
estimate in von Mises coordinates to the original physical
coordinates, yielding an explicit decay rate for the difference
$u(x,y)-\bar{u}(y)$.

\begin{thm}\label{Main3}
	Fix $a>0$ and a positive integer $m$.
	Assume \eqref{monotony}, \eqref{condition}, and \eqref{weight}.
	There exists a sufficiently small constant $\delta_{a,m}>0$
	such that, if
	\begin{align}\label{pho initial}
		\|(1+\xi)^{m/2}\phi_0\|_{\mathbf{Y}_{0,0}}
		+ \|\phi_0\|_{\mathbf{Y}_{\mathrm{in}}}
		\leq \delta_{a,m},
	\end{align}
	then, for all $x>0$, it holds that 
	\begin{equation}
		\begin{aligned}
			\sup_{y\geq 0}|u(x,y)-\bar{u}(y)|
			\leq &
			\epsilon C_{m,a}
			\left(
			\|(1+\xi)^{m/2}\phi_0\|_{\mathbf{Y}_{0,0}}
			+ \|\phi_0\|_{\mathbf{Y}_{\mathrm{in}}}
			\right)
			(1+x)^{-\frac{m}{2}},
		\end{aligned}
	\end{equation}
	where $C_{m,a}$ depends only on $m$ and $a$.
\end{thm}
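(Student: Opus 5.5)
Theorem~\ref{Main3} follows from Theorem~\ref{Main2} once we control the discrepancy between the two streamline coordinates. At a fixed $x=X$, write
\[
u(x,y)-\bar u(y)=\Big[u(X,y(X,\xi;u))-\bar u(y(\xi;\bar u))\Big]+\Big[\bar u(y(\xi;\bar u))-\bar u(y(\xi;u))\Big],
\]
where $\xi=\xi(x,y;u)$. Here $y(\xi;\bar u)$ is the height at which the \emph{background} stream function equals $\xi$. It is explicit, since $\bar\xi(y)=y-(1-e^{-ay})/a$.

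The first bracket is bounded by Theorem~\ref{Main2}, uniformly in $\xi$, by $\epsilon C_{m,a}(\cdots)(1+X)^{-m/2}$. For the second bracket we write $\bar u(y)=\bar u(y(\bar\xi(y);\bar u))$. We then use the mean value theorem for the map $\xi\mapsto\bar u(y(\xi;\bar u))$, whose derivative is $\bar u_y/\bar u$. This gives
\[
|\bar u(y(\xi;\bar u))-\bar u(y)|\le \sup\Big|\frac{\bar u_y}{\bar u}\Big|\,|\xi(x,y;u)-\bar\xi(y)|.
\]
The derivative $\bar u_y/\bar u$ is bounded away from $y=0$ but behaves like $1/y$ near the wall. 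So I will instead work directly with $\bar u(y(\xi;\bar u))=\sqrt{\bar w(\xi)}$. This function has derivative $\bar u_y(y(\xi;\bar u))/\bar u$, which near $\xi=0$ behaves like $1/\sqrt{\xi}$. The function is therefore H\"older-$\tfrac12$, and we also have the Lipschitz bound $\bar u_y/\bar u\lesssim_a 1$ once $\xi\gtrsim 1$. Near the wall, as an alternative, I will compare $u$ and $\bar u$ via $u_y>0$ and the bounds on $u_y$ near $y=0$. It will be simpler still to note that $|u(x,y)-\bar u(y)|\le \|u_y-\bar u_y\|_{L^\infty}y$ for small $y$, and to control that quantity by $\phi_\xi$.

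The key quantity is therefore
\[
\xi(x,y;u)-\bar\xi(y)=\int_0^y (u-\bar u)(x,s)\,ds .
\]
Its decay is not immediate, because it is an integral over an unbounded range. I will express it in von Mises variables: $y(X,\xi;u)=\int_0^\xi d\zeta/\sqrt{w}$, so that
\[
y(X,\xi;u)-y(\xi;\bar u)=-\int_0^\xi \frac{\epsilon\rho}{u\bar u}\,d\zeta .
\]
By Cauchy--Schwarz this is bounded by $\epsilon\|\phi/u^{\lambda}\|_{L^2_\xi}\cdot\big\|(1+\zeta)^{-m/2}u^{\lambda-1}/\bar u\cdot(\cdots)\big\|$. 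The weighted decay estimate underlying Theorem~\ref{Main2} gives, I expect, $\|(1+\xi)^{k/2}\phi(X)/u^\lambda\|_{L^2_\xi}\lesssim(1+X)^{-(m-k)/2}$. Taking $k$ of order one and $\lambda\ge1$ keeps the weight $u^{\lambda-1}/\bar u$ integrable near $\xi=0$, since $\bar u\sim\sqrt{\xi}$ there. This yields a decay of $(1+X)^{-(m-1)/2}$ for the shift, uniformly in $\xi$. To recover the full rate $m/2$, I will either apply Theorem~\ref{Main2} with $m+1$ in place of $m$ and absorb the extra weight into the hypothesis, or split the integral at $\xi\sim(1+X)^{\theta}$.

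We must also check that $u\sim\bar u$ uniformly, so that $y(\cdot;u)$ is a global diffeomorphism with comparable Jacobians; this is Lemma~\ref{u0simbaru}, propagated in $X$. The smallness $\delta_{a,m}$ and the inclusion of $\|\phi_0\|_{\mathbf{Y}_{2,0}}$ enter here. They give a uniform $L^\infty$ bound on $\phi_\xi$, via an energy estimate for $\phi_X$ in $\mathbf{Y}_1$ combined with the equation, and hence $|u-\bar u|\lesssim \delta\,\bar u$ near the wall. This controls the singular region of the change of variables and allows the H\"older step near $\xi=0$.

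The main obstacle is the wall region together with the loss of a power of $X$ in the coordinate shift. I will handle the loss by the weighted bootstrap described above.
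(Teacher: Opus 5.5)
Your decomposition is the same as the paper's, and the first bracket is exactly Theorem~\ref{Main2}. The gap is in the second bracket $\bar u(y^*)-\bar u(y)$, $y^*=y(\xi;\bar u)$. It appears both away from the wall and at the wall.

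\textbf{Far field.} You try to make the shift $y-y^*$ decay uniformly in $\xi$ using weighted $L^2$ norms. This costs a factor $(1+X)^{k/2}$ with $k>1$, and neither of your remedies is admissible. Running Theorem~\ref{Main2} with $m+1$ needs $\|(1+\xi)^{(m+1)/2}\phi_0\|_{\mathbf{Y}_{0,0}}$, which is neither assumed nor allowed on the right-hand side. Splitting at $\xi\sim(1+X)^{\theta}$ only trades one loss for another, as long as you bound the derivative of $\bar u$ by a constant. The missing observation is that the shift does not need to be uniformly small. Write $|\bar u(y^*)-\bar u(y)|\le a e^{-a\hat y}|y-y^*|$ and keep the exponential. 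You have $|y-y^*|\le\int_0^{\xi}\frac{|u-\bar u|}{u\bar u}\,d\xi'\lesssim \xi\,\|u-\bar u\|_{L^\infty_\xi}+(\text{contribution of }\xi'\le 1/a)$. Also $\hat y\gtrsim y^*$ (Lemma~\ref{equi2}) and $y^*\sim\xi$ for $\xi\ge1/a$ (Lemma~\ref{baru}). So the factor $a\xi e^{-ca y^*}\lesssim1$ absorbs the linear growth, and the far field costs nothing beyond Theorem~\ref{Main2}.

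\textbf{Wall.} Because $\rho=\phi/(u+\bar u)$, for $\xi'\le 1/a$ the integrand of the shift is $\epsilon|\rho|/(u\bar u)\sim\epsilon|\phi|/(a\xi')^{3/2}$. You dropped the factor $1/(u+\bar u)$. After pulling out $\phi/u^{\lambda}$ the remaining weight is $\bar u^{\lambda-3}$, not $u^{\lambda-1}/\bar u$, and $\int_0\bar u^{2\lambda-6}\,d\xi'$ diverges for every $\lambda\le2$. The paper only has $\lambda<\frac32$, with $X$-decay only at $\lambda=1$.

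Your fallbacks do not repair this:
\begin{itemize}
\item The H\"older-$\frac12$ step gives only half the rate.
\item A uniform $L^\infty$ bound on $\phi_\xi$ only reproduces $|u-\bar u|\lesssim\epsilon\bar u$ from Lemma~\ref{u0simbaru}. A decaying one would suffice, but you do not derive it, and $\phi_\xi$ compares $u_y$ at equal $\xi$, not at equal $y$.
\item The decaying $L^2$ bound gives $|\phi(\xi')|\le\xi'^{1/2}\|\phi_\xi\|_{L^2_\xi}$, which leaves the log-divergent integral $\int_0\xi'^{-1}\,d\xi'$.
\end{itemize}

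What you need is decay of $\phi_\xi$ in some $L^p$ with $p>2$. The paper proves $\|\phi_\xi\|_{L^4_\xi}\lesssim(1+X)^{-m/2}$ in Lemma~\ref{RATE11}, by testing the equation with $\phi\phi_\xi^2/u$. Then $|\phi(\xi')|\le\xi'^{3/4}\|\phi_\xi\|_{L^4_\xi}$, and the wall part is $\lesssim\epsilon a^{-7/4}\|\phi_\xi\|_{L^4_\xi}$. That $L^4$ bound rests on the pointwise-in-$X$ decay of $\|\phi_X/\sqrt u\|_{L^2_\xi}$ (Lemma~\ref{RATE10}), proved from the $X$-differentiated equation by a nonlinear bootstrap. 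This step, not the equivalence $u\sim\bar u$, is where $\|\phi_0\|_{\mathbf{Y}_{2,0}}$ and the smallness $\delta_{a,m}$ enter.
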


\begin{remark}
	For each positive integer $m$, the theorem gives convergence
	to the suction profile at the algebraic rate $(1+x)^{-m/2}$,
	provided that the initial perturbation satisfies the
	corresponding weighted smallness condition. Thus, arbitrarily
	high algebraic decay rates are obtained under suitable
	assumptions on the initial perturbation.
	By comparison, the optimal convergence rate in the
	zero-suction setting considered in \cite{Lei} is
	$(1+x)^{-1}$. This comparison highlights the enhanced
	downstream decay associated with wall suction.
\end{remark}

\begin{remark}
	There exists a sufficiently small constant $\kappa_{a,m}>0$
	such that, if the initial datum $u_0(y)$ in physical coordinates
	satisfies
	\begin{align}\label{initcond5}
		\|u_0-\bar{u}\|_{L_y^1}
		+ \|u_0-\bar{u}\|_{H_y^2}
		+ \|y^{m/2}(u_0-\bar{u})\|_{L_y^2}
		\leq \kappa_{a,m},
	\end{align}
	then \eqref{pho initial} holds, see Lemma \ref{F2,0} in Appendix~A for details of proof.
\end{remark}

\subsection{Main idea}
Now we explain the main idea in the present paper.
For simplicity, we consider the linearized equation of \eqref{eqphi}, which reads as \begin{align}\label{idea1}
		\phi_{X}-a\phi_\xi-\bar{u}\phi_{\xi\xi}-\frac{\bar{u}_{yy}}{\bar{u}^2}\phi=0.
\end{align}

1. Applying the basic energy estimate on \eqref{idea1}, one can easily derive \begin{align}\label{idea112}
	\sup_{X \ge 0}
	\left\|
\phi
	\right\|_{L^2_{\xi}}^2
	+
	\left\|
	\sqrt{\bar{u}}\phi_{\xi}
	\right\|_{L^2_{X, \xi }}^2-\int_{0}^{\infty}\int_{0}^{\infty} \frac{\bar{u}_{yy}}{\bar{u}^2}\phi^2\lesssim \|\phi_{0}\|_{L_{\xi}^2}^2.
\end{align}

However, due to the presence of suction,  we encounter difficulties when controlling $\sup_{X \geq0}\|\phi_{\xi}\|_{L^{2}_{\xi}}^{2}$.
Specifically, multiplying  \eqref{idea1} by $\frac{1}{\bar{u}}\phi_{X}$, one derives 
	\begin{align}\label{idea2}
	\int_{0}^{\infty} \frac{\phi_{X}^2}{\bar{u}}
	+\frac12 \frac{d}{d X}\int_{0}^{\infty} \phi_{\xi}^2
	=a\int_{0}^{\infty} \frac{\phi_{\xi}\phi_{X}}{\bar{u}}
	+\int_{0}^{\infty} \frac{\bar{u}_{yy}}{\bar{u}^3}\phi\phi_{X}.
\end{align}
	It is clear that
\begin{align}\label{idea3}
\begin{split}
	a\left|\int_{0}^{\infty}\int_{0}^{\infty} \frac{\phi_{\xi}\phi_{X}}{\bar{u}}\right|
&\leq \frac14 \int_{0}^{\infty}\int_{0}^{\infty} \frac{\phi_X^2}{\bar{u}}
+a^2\int_{0}^{\infty}\int_{0}^{\infty} \frac{\phi_\xi^2}{\bar{u}},\\
\bigg|\int_{0}^{\infty}\int_{0}^{\infty} \frac{\bar{u}_{yy}}{\bar{u}^3}\phi\phi_{X}\bigg|
&\leq     \frac14 \int_{0}^{\infty}\int_{0}^{\infty} \frac{\phi_X^2}{\bar{u}}+
\int_{0}^{\infty}\int_{0}^{\infty}\frac{|\bar{u}_{yy}|^2}{\bar{u}^{5}}\phi^2.  
\end{split}
\end{align}
Intuitively, without higher‑order derivative  estimates, the terms
\begin{align}\label{1.31}
	\int_{0}^{\infty} 	\int_{0}^{\infty}\frac{\phi_{\xi}^{2}}{\bar{u}} \quad \mbox{and}\quad 
	\int_{0}^{\infty}\int_{0}^{\infty}\frac{|\bar{u}_{yy}|^2}{\bar{u}^{5}}\phi^2,
\end{align}
on RHS of \eqref{idea3} are difficult to bound.

If we apply the same quotient estimate introduced by Iyer \cite{Iyer}, then one can only get  \begin{align}
		\left\|
		\phi_{\xi}
		\right\|_{L^2_{X, \xi }}^2+\	\sup_{X \ge 0}
		\left\|
		\frac{\phi}{\sqrt{\bar{u}}}
		\right\|_{L^2_{\xi}}^2\leq C\bigg\|\frac{\phi_{0}}{\sqrt{u_0}}\bigg\|_{L_{\xi}^2}^2,
\end{align}
which is not enough to control the first term in \eqref{1.31}. Instead, we consider a sharper quotient estimate, that is, multiplying \eqref{idea1} by  $\frac{1}{\bar{u}^{2\lambda}} \phi$ with $\lambda\in[1,\frac32)$, one has 
\begin{align}\label{idea114}
	\frac{1}{2}\frac{d}{dX}\int_{0}^{\infty}\frac{\phi^2}{\bar{u}^{2\lambda}}+\int \frac{\phi_\xi^2}{\bar{u}^{2\lambda-1}}
	&\leq  (2\lambda-1)\bigg\|\frac{\phi_\xi}{\bar{u}^{\lambda-\frac{1}{2}}}\bigg\|_{L_{\xi}^2}
	\bigg\|\frac{\bar{u}_y\phi}{\bar{u}^{\lambda+\frac{3}{2}}}\bigg\|_{L_{\xi}^2}+ (\lambda-1)\bigg\|\frac{|\bar{u}_{yy}|^{\frac{1}{2}}\phi}{\bar{u}^{\lambda+1}}\bigg\|_{L_{\xi}^2}^2.
\end{align} 
It is noted that 
the most difficult term on RHS of \eqref{idea114} is $\left\| \frac{\phi\bar{u}_y\chi_{\frac{\delta}{a}}}{\bar{u}^{\lambda+\frac{3}{2}}} \right\|_{L_\xi^2}.$
By using a sharp Hardy-type inequality established in Lemma \ref{Lower base}, we can obtain	\begin{align}\label{idea113}
		\left\|\frac{\phi\bar{u}_y\chi_{\frac{\delta}{a}}}{\bar{u}^{\lambda+\frac{3}{2}}}\right\|_{L_\xi^2}
	\leq \frac{1}{C_2(\delta)^{2}}\frac{4}{2\lambda+1} \Bigg(
	\left\|  \frac{\bar{u}_y \phi\chi_{\frac{\delta}{a}}}{\bar{u}^{\lambda+\frac{1}{2}}}\right\|_{L_{\xi}^2}
	+ \left\|  \frac{ \phi_\xi\chi_{\frac{\delta}{a}}}{\bar{u}^{\lambda-\frac{1}{2}}}\right\|_{L_{\xi}^2}
	+\frac{C}{\delta} \left\| \frac{\bar{u}_y \phi}{\bar{u}^{\lambda-\frac{1}{2}}} \right\|_{L_{\xi\geq \frac{\delta}{a}}^2}
	\Bigg),
\end{align}
where
\begin{align}\label{1.36}
\lim_{\delta\to 0}C_2(\delta)=\sqrt{2} .
\end{align}
Due to  $\lambda\in[1,\frac{3}{2})$ and \eqref{1.36}, it holds that 
\begin{align*}
	\frac{2\lambda-1}{C_{2}(\delta)^2}\frac{4}{2\lambda+1}<1\quad \mbox{for}\,\, \delta\ll1,
\end{align*}
thus
\begin{align}
 (2\lambda-1)\bigg\|\frac{\phi_\xi}{\bar{u}^{\lambda-\frac{1}{2}}}\bigg\|_{L_{\xi}^2}
\bigg\|\frac{\bar{u}_y\phi}{\bar{u}^{\lambda+\frac{3}{2}}}\bigg\|_{L_{\xi}^2}
&\leq \frac{2\lambda-1}{C_{2}(\delta)^2}\frac{4}{2\lambda+1} \bigg\|\frac{\phi_\xi}{\bar{u}^{\lambda-\frac{1}{2}}}\bigg\|_{L_{\xi}^2}^2 + \cdots,
\end{align}
which implies that $\frac{2\lambda-1}{C_{2}(\delta)^2}\frac{4}{2\lambda+1} \bigg\|\frac{\phi_\xi}{\bar{u}^{\lambda-\frac{1}{2}}}\bigg\|_{L_{\xi}^2}^2$ can be absorbed by the second term on LHS of \eqref{idea114}. We remark that the sharp Hardy-type inequality Lemma \ref{Lower base} plays an essential role in the  above argument. For the second term  of \eqref{1.31} and the second term on RHS of \eqref{idea114}, we need to use the sharp Hardy-type estimates twice to bound these terms. Then one can get the following sharper quotient estimate
\begin{align}\label{1.37}
\left\|
	\frac{\phi_\xi}{u^{\lambda-\frac{1}{2}}}
	\right\|_{L^2_{X, \xi }}^2+ \ \sup_{X \ge 0}
	\left\|
	\frac{\phi}{u^{\lambda}}
	\right\|_{L^2_{\xi}}^2 \lesssim  \| \phi_0\|_{\mathbf{Y}_{0,0}}^{2},
\end{align}
which, together with \eqref{idea2}, yields that 
\begin{align}\label{1.38}
\left\|
	\frac{\phi_X}{\sqrt{u}}
	\right\|_{L^2_{X , \xi }}^2 + \ \sup_{X \ge 0}
	\left\|
	\phi_\xi
	\right\|_{L^2_{\xi }}^2 \lesssim a^2\| \phi_0 \|_{\mathbf{Y}_{0,0}}^{2}+
	\| \phi_0 \|_{\mathbf{Y}_{1,0}}^{2}.
\end{align}
We refer to Section \ref{subsection2.2} for the details of proof.

\smallskip


2. Due to the effect of suction with $a>0$, and using \eqref{1.37}-\eqref{1.38}, and then applying the weighted parabolic argument trading spatial weights for temporal decay, we can obtain the following key estimates:
	\begin{align}
		\sup_{X \geq0}
		\int_0^\infty X^{m} \frac{\phi^2}{u^{2}}
		+ \int_0^\infty\int_0^\infty X^{m} \frac{\phi_{\xi}^2}{u}
		&\leq   C_{m,a} \|(1+\xi)^{\frac{m}{2}}\phi_{0}\|_{\mathbf{Y}_{0,0}}^{2},\label{1.39}\\
	\sup_{X \geq0}
	\int_0^\infty X^m \phi_{\xi}^2 \,
	+ \int_0^\infty\int_0^\infty X^m \frac{\phi_{X}^2}{u}
	&\leq C_{m,a}\|(1+\xi)^{\frac{m}{2}}\phi_{0}\|_{\mathbf{Y}_{0,0}}^{2}+\|\phi_{0}\|_{\mathbf{Y}_{1,0}}^2,\label{1.40}
\end{align}
where $m$ is a nonnegative integer, and $C_{m,a}$ is a constant depending only on $m$ and $a$.
Then it follows from \eqref{1.39}-\eqref{1.40} and the  Sobolev embedding theory that
\begin{equation}
	\begin{aligned}
			\|\phi\|_{L_{\xi}^\infty}& \le \|\phi\|_{L_{\xi}^2}^{\frac12} \|\phi_\xi\|_{L_{\xi}^2}^{\frac12}\le C_{m,a} (1+X)^{-\frac{m}{2}} \left(\big\| (1+\xi)^{\frac{m}{2}} \phi_0 \big\|_{L_{\xi}^2}+ \| \phi_{0\xi} \|_{L_{\xi}^2}\right).
	\end{aligned}
\end{equation}
We refer to Section \ref{decay von mises} for the detailed proof.

\smallskip 

3. To obtain the decay estimate in the original physical coordinates, however, the available decay estimates for $\|\phi\|_{L_{\xi}^2}$ and $\|\phi_{\xi}\|_{L_{\xi}^2}$ are insufficient to yield pointwise decay in physical coordinates. Motivated by \cite{Lei}, we first derive a decay estimate for $\left\| \frac{\phi_X}{\sqrt{u}} \right\|_{L_{\xi}^2}$, i.e.,
\begin{align*}
	&\sup_{ X \geq 0}\Big\{(1+X)^{m}\int_0^\infty \frac{\phi_{X}^2}{u} \Big\}
	+  \int_0^\infty  \int_0^\infty (1+X)^{m} |\phi_{X\xi}|^2  
	\leq C_{a,m}  \left(\|(1+\xi)^{\frac{m}{2}}\phi_{0}\|_{\mathbf{Y}_{0,0}}^2+\|\phi_{0}\|_{\mathbf{Y}_{in}}^2\right),
\end{align*}
which further yields that
	\begin{equation*}
	\begin{aligned}
\|\phi_\xi\|_{L_{\xi}^4}
		\leq C_{m,a}\left( \|(1+\xi)^{\frac{m}{2}}\phi_{0}\|_{\mathbf{Y}_{0,0}}+ \|\phi_{0}\|_{\mathbf{Y}_{in}}\right)(1+X)^{-\frac{m}{2}}.
	\end{aligned}
\end{equation*}	
With the help of above estimates, one can obtain 
\begin{align}
	\sup_{y\geq 0}|u(x,y)-\bar{u}(y)|
\leq &
\epsilon C_{m,a}
\left(
\|(1+\xi)^{m/2}\phi_0\|_{\mathbf{Y}_{0,0}}
+ \|\phi_0\|_{\mathbf{Y}_{\mathrm{in}}}
\right)
(1+x)^{-m/2},
\end{align}
We refer to Section \ref{decay physical} for the detailed proof.

\subsection{Notations.} We fix notation conventions used throughout the manuscript:
We adopt the standard comparison notation:
 	$A\lesssim B$ means $A\le cB$ for some absolute constant $c>0$, and $A\lesssim_\beta B$ means $A\le c'B$ for some constant $c'>0$ depending only on $\beta$. Similarly, $A\sim_\beta B$ and $A\sim B$ stand for mutual two-sided estimates $A\lesssim_\beta B\lesssim_\beta A$ and $A\lesssim B\lesssim A$, respectively.
  For brevity, integrals without explicit integration limits are abbreviated as
 	\begin{equation*}
 		\int \cdots := \int_{0}^{\infty}\cdots\,d\xi , \quad 
 		\int \int \cdots := \int_{0}^{\infty} \int_{0}^{\infty}\cdots\,d\xi \, dX.
 	\end{equation*}

\section{Global Existence}\label{YYYY}
For later use, we define the following cut-off function
	\begin{equation}\label{cut}
		\chi(\xi)=
		\begin{cases}
			1, & \xi\le 1,\\
			0, & \xi\ge 2.
		\end{cases}
	\end{equation}
	
For any $\alpha>0$, we denote
\begin{align}
\chi_{\alpha}(\xi):=\chi\!\left(\frac{\xi}{\alpha}\right).
\end{align}

	\subsection{Useful estimates}
\begin{lem}\label{baru}
	For every fixed $s>0$, the following estimates hold:
	\begin{equation}\label{equiva}
		\begin{aligned}
			&\bar{u}(y(\xi;\bar{u}))\sim_s 1, \quad \text{when} \quad \xi\geq \frac{s}{a},\\
			& y(\xi;\bar{u}) \sim_s \xi, \quad \text{when} \quad \xi\geq \frac{s}{a},\\
			&\bar{u}(y(\xi;\bar{u}))\sim_s ay(\xi;\bar{u})\sim_s \sqrt{a\xi}, \quad \text{when} \quad \xi\leq \frac{s}{a},\\
			&\lim_{\xi\to 0}\frac{\bar{u}(y(\xi;\bar{u}))}{\sqrt{a\xi}}=\sqrt{2}.
		\end{aligned}
	\end{equation}
\end{lem}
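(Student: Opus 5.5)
The plan is to compute the von Mises map for the explicit profile $\bar u(y)=1-e^{-ay}$ in closed form and read off all four relations. Integrating,
\[
\xi(y)=\int_0^y (1-e^{-as})\,ds = y-\frac{1-e^{-ay}}{a}.
\]
This function is strictly increasing in $y$ (its derivative is $\bar u>0$ for $y>0$), vanishes at $y=0$, and tends to infinity, so $y(\xi;\bar u)$ is well defined. Everything is expressed through the scaled variable $t=ay\in[0,\infty)$:
\[
a\xi = t-1+e^{-t}=:g(t),\qquad \bar u = 1-e^{-t}=:h(t).
\]
Since both $a\xi$ and $\bar u$ are functions of $t$ alone, each claimed equivalence reduces to a statement about the single-variable functions $g,h$ on a set of the form $\{g(t)\ge s\}$ or $\{g(t)\le s\}$. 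The implicit constants then depend only on $s$, and not on $a$, which is what the notation $\sim_s$ requires. I will simply write $y$ for $y(\xi;\bar u)$.

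\textbf{Region $\xi\le s/a$.} Here $g(t)\le s$, which forces $t\le t_s:=g^{-1}(s)$, a compact range $[0,t_s]$. On $(0,t_s]$ the ratios $h(t)/t$ and $g(t)/t^2$ are continuous and positive. As $t\to0$ they tend to $1$ and $1/2$ respectively, since $h(t)=t+O(t^2)$ and $g(t)=t^2/2+O(t^3)$. They therefore extend to continuous positive functions on $[0,t_s]$ and are bounded above and below by constants depending only on $s$. This gives $\bar u\sim_s t=ay$ and $\sqrt{g}\sim_s t$, i.e. $ay\sim_s\sqrt{a\xi}$. The limit statement follows from the same expansions:
\[
\frac{\bar u}{\sqrt{a\xi}}=\frac{h(t)}{\sqrt{g(t)}}\to \frac{t}{t/\sqrt2}=\sqrt2 \quad\text{as } \xi\to0,
\]
using that $\xi\to0$ if and only if $t\to0$.

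\textbf{Region $\xi\ge s/a$.} Here $t\ge t_s>0$, so $h(t)\in[1-e^{-t_s},1)$, which gives $\bar u\sim_s1$. For $y\sim\xi$, note first that $\xi\le y$ always holds. In the other direction, $a\xi = t-1+e^{-t}\ge c_s t$ for $t\ge t_s$: the ratio $g(t)/t$ is positive and increasing (as $g$ is convex with $g(0)=0$), so it is bounded below by $g(t_s)/t_s=s/t_s$. Hence $y\le (t_s/s)\,\xi$.

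\textbf{Main obstacle.} There is no real obstacle here. The only delicate point is bookkeeping: checking that every constant depends on $s$ alone and not on $a$, which the scaling $t=ay$ makes automatic.
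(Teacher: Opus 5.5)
Your proof is correct and follows essentially the same route as the paper. Both use the closed form $a\xi=t-1+e^{-t}$ with $t=ay$, convert $\xi\gtrless s/a$ into $t\gtrless t_s$ by monotonicity, and obtain the small-$\xi$ regime and the limit $\sqrt2$ from the expansions at $t=0$. Your continuity/compactness argument for $h(t)/t$ and $g(t)/t^2$ on $[0,t_s]$ and your convexity bound $g(t)/t\ge s/t_s$ are slightly tidier versions of the paper's Taylor-expansion and integral-lower-bound steps, and they make the $a$-independence of the constants explicit.
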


\begin{proof} Recall $\bar{u}(y)=1-\mathrm{e}^{-ay}$ and its associated stream-function
	\begin{align}\label{stream}
		\xi=\int_{0}^{y}\bar{u}(y')\,\mathrm{d}y'=y+\frac1a\bigl(\mathrm{e}^{-ay}-1\bigr).
	\end{align}
We divide the proof into three cases.

\smallskip

1. We consider the case for $\xi\ge \frac{s}{a}$. Then, from \eqref{stream}, it is clear that
\[
ay+\mathrm{e}^{-ay}-1\ge s.
\]
By strict monotonicity of the function $f(t)=t+\mathrm{e}^{-t}-1$, there exists a constant $C_1(s)$ depending only on $s$ such that
	\[
	ay\ge C_1(s) \quad \text{iff} \quad \xi \geq \frac{s}{a},
	\]
which yields immediately that
	\[
	\tilde{C}_1(s)\le \bar{u}(y)\le 1 \quad \text{for} \quad \xi\geq \frac{s}{a}.
	\]
	Thus we obtain
	\[
	\bar{u}\big(y(\xi;\bar{u})\big)\sim_s 1 \quad \text{for} \quad \xi\geq \frac{s}{a}.
	\]
	Since $\xi\geq \frac{s}{a}$ implies $y\geq \frac{C_1(s)}{a}$, it holds that
	\[
	\xi=\int_{0}^{y}\bar{u}(y')\,\mathrm{d}y'
	\ge \int_{\frac{C_1(s)}{4a}}^{y}\bar{u}(y')\,\mathrm{d}y'
	\ge \frac{\tilde{C}_1(s)}{2}y \quad \text{for} \quad y\geq \frac{C_1(s)}{a}.
	\]
which, together with $\bar{u}(y)\le 1$, yields that
	\[
	\frac{\tilde{C}_1(s)}{2}\,y \le \xi \le y.
	\]
	Hence we obtain
	\[
	y(\xi;\bar{u}) \sim_s \xi \quad \text{for}\quad \xi\geq \frac{s}{a}.
	\]
	
\smallskip

2. Next, we consider the case for $\xi\leq \frac{s}{a}$. It is clear that the constraint $\xi\leq \frac{s}{a}$ is equivalent to
	\begin{align*}
		ay\leq C_1(s),
	\end{align*}
	where $C_1(s)>0$ is a constant depending only on $s$.
It holds that
	\[
	\bar{u}(y)=ay-\frac{(ay)^2}{2}+o((ay)^2) \quad \text{as} \quad ay\to 0,
	\]
	which implies the asymptotic equivalence
	\begin{align}\label{Taylor}
		\bar{u}(y(\xi;\bar{u}))\sim_s ay(\xi;\bar{u}) \quad \text{for} \quad \xi\leq \frac{s}{a}.
	\end{align}
Then one has from \eqref{Taylor} that
	\[
	\xi
	=\int_{0}^{y}\bar{u}(y')\,\mathrm{d}y'
	\sim_s \int_{0}^{y} a y'\,\mathrm{d}y'
	= \frac12 a y^2 \quad \text{for} \quad ay\leq C_1(s),
	\]
	which implies
	\[
	ay(\xi;\bar{u}) \sim_s \sqrt{a\xi}.
	\]

3. 	Finally, we consider the limit $\eqref{equiva}_4$.
Denote $t = ay$, as $\xi\to 0$, one has from \eqref{stream} that
	\[
	a\xi = t + \mathrm{e}^{-t} - 1 = \tfrac{1}{2}t^2+ o(t^2),
	\]
which implies that 
	\[
	\lim_{\xi\to 0}
	\frac{\bar{u}\big(y(\xi;\bar{u})\big)}{\sqrt{a\xi}}
	=	\lim_{t\to 0} \frac{t+o(t)}{\sqrt{t^2/2+o(t^2)}} = \sqrt{2}.
	\]
Therefore the proof of Lemma \ref{baru} is completed.
\end{proof}

\begin{remark}\label{H,h}
	Denote
	\begin{align}
		H(\delta)=\sup_{\xi\leq \frac{\delta}{a}}\left|\frac{\bar{u}(y(\xi;\bar{u}))}{\sqrt{a\xi}}\right|, \qquad h(\delta)=\inf_{\xi\leq \frac{\delta}{a}}\left|\frac{\bar{u}(y(\xi;\bar{u}))}{\sqrt{a\xi}}\right|.
	\end{align}
	From Lemma \ref{baru}, thus we have
	\begin{equation}\label{2.6}
		\begin{aligned}
			& h(\delta)\sqrt{a\xi} \leq \bar{u}(y(\xi;\bar{u}))\leq H(\delta)\sqrt{a\xi}, \\
			& \lim_{\delta\to 0}H(\delta)=\lim_{\delta\to 0}h(\delta)=\sqrt{2}.
		\end{aligned}
	\end{equation}
We emphasize that the behavior \eqref{2.6} plays a crucial role when applying  Hardy's inequality in Section \ref{subsection2.2}.
\end{remark}

\begin{lem}\label{u0simbaru}
	Under the assumption  \eqref{condition}, for $(X,\xi)\in \mathbb{R}^{+}\times \mathbb{R}^{+}$, we have
	\begin{equation}\label{lyh1}
		\begin{aligned}
			|\phi(X,\xi)|&\lesssim \bar{u}^2\big(y(\xi;\bar{u})\big),\\
			|\rho(X,\xi)| &\lesssim \bar{u}\big(y(\xi;\bar{u})\big).
		\end{aligned}
	\end{equation}
	Since $\epsilon$ is sufficiently small, it holds that
	\begin{equation}
		u\big(X,y(X,\xi;u)\big) \sim \bar{u}\big(y(\xi;\bar{u})\big),
	\end{equation}
	over the domain $\mathbb{R}^{+}\times \mathbb{R}^{+}$.
\end{lem}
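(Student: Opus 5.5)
The plan is to prove \eqref{lyh1} through a pointwise bound on $\epsilon\rho = u - \bar u$ in von Mises coordinates. I will establish
\[
|u(X,y(X,\xi;u))-\bar u(y(\xi;\bar u))| \le C\,\frac{(1+a)^2}{a}\cdot\frac{a}{(1+a)^2}\,\epsilon\,\bar u(y(\xi;\bar u)),
\]
so the $a$-dependence in \eqref{condition} cancels the $a$-dependence of the constants. Since $\epsilon\phi = u^2-\bar u^2 = \epsilon\rho\,(u+\bar u)$, the bound $|\rho|\lesssim \bar u$ gives $|u|\le (1+C\epsilon)\bar u$, hence $u+\bar u\lesssim \bar u$ and $|\phi|\lesssim \bar u^2$. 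Then, for $\epsilon$ small, $|u-\bar u|\le C\epsilon\,\bar u$ yields $u\sim\bar u$. So everything reduces to the $\rho$ estimate, and since no decay in $X$ is claimed it suffices to show it is propagated from $X=0$.

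\textbf{Step 1: initial data.} At $X=0$ both $u_0$ and $\bar u$ vanish at $y=0$ with positive slope. I write $\xi=\int_0^y u_0$ and compare $y(\xi;u_0)$ with $y(\xi;\bar u)$. The difference of stream functions is bounded by $\|u_0-\bar u\|_{L^1_y}$, and near the wall the bound $|u_0-\bar u|\le \|\partial_y(u_0-\bar u)\|_{L^\infty}\,y$ controls everything linearly in $y$. Combined with Lemma \ref{baru} ($\bar u\sim ay\sim\sqrt{a\xi}$ near the wall, $\bar u\sim1$ away from it, with thresholds scaling like $1/a$), this gives $|u_0(y(\xi;u_0))-\bar u(y(\xi;\bar u))|\lesssim \frac{(1+a)^2}{a}\,\|\cdot\|\,\bar u$. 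The factors of $a$ come from converting $y$-distances to $\xi$-distances: $\bar u_y=ae^{-ay}\le a$ and $y\lesssim \xi/\bar u$.

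\textbf{Step 2: propagation in $X$.} I use the maximum principle for \eqref{vonprandtl}, in the spirit of Oleinik and Serrin. The functions $w$ and $\bar w=\bar u^2$ both solve $w_X-aw_\xi-\sqrt w\,w_{\xi\xi}=0$, and $\bar w$ is $X$-independent. I build barriers $\bar w_\pm(\xi)$ that are stationary sub/supersolutions with $\bar w_\pm - \bar w = \pm C\epsilon\,\bar u^2$ (roughly). A natural choice is a shifted or rescaled profile, for example $\bar w_\pm(\xi)=\bar u^2(y(\xi\mp\eta;\bar u))$ suitably truncated, or $(1\pm\epsilon')\bar w$ corrected near infinity. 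These barriers satisfy the same equation or a one-signed inequality and sandwich $w_0$ by Step 1. Comparison, valid since Theorem \ref{oleinik} gives $w\sim\xi$ near the wall uniformly on $[0,L]$, then gives $\bar w_-\le w\le \bar w_+$ for all $X$. Taking square roots recovers $|u-\bar u|\lesssim\epsilon\bar u$.

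\textbf{Main obstacle.} The hardest step is constructing barriers that are genuine sub/supersolutions of the degenerate equation, match the boundary conditions $0$ at $\xi=0$ and $1$ at infinity, and deviate from $\bar w$ by only $O(\epsilon)\bar w$ near the wall. A translated profile $\bar w(\xi+\eta)$ is nonzero at $\xi=0$, and a pure multiple $(1+\epsilon)\bar w$ violates the condition at infinity. I would first try the family $\bar u^2$ with suction parameter perturbed, $a\to a(1\pm c\epsilon)$, with the far-field value left unchanged. Each such profile is an exact steady solution with suction $a(1\pm c\epsilon)$, and the difference in the drift term $\mp c\epsilon a\,\bar w_\xi$ has a definite sign, making it a sub- or supersolution of the equation with suction $a$. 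Near the wall $\bar u^2\sim 2a\xi$, so these profiles differ by a factor $1\pm c\epsilon$, exactly the required relative closeness. The verification of the sign of the drift defect and of the initial ordering is where I expect the real work.
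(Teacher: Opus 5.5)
Your reduction to $|u-\bar u|\lesssim\epsilon\bar u$ and your Step 1 match the paper, which proves $|w_0-\bar w|\lesssim\epsilon\,\bar w$ (with $\bar w=\bar u^2(y(\xi;\bar u))$) by the same stream-function comparison. The gap is in Step 2: the barriers you settle on cannot be placed around the data.

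\textbf{Why the suction-perturbed profiles fail.} Since $\bar w_b(\xi)=(1-e^{-t})^2$ with $t+e^{-t}-1=b\xi$, replacing $a$ by $a(1\pm c\epsilon)$ is just the dilation $\xi\mapsto(1\pm c\epsilon)\xi$. Every such profile satisfies $\bar w_b\le 1$ and $1-\bar w_b(\xi)\approx 2e^{-1-b\xi}$. Hence $|\bar w_{a(1\pm c\epsilon)}-\bar w|$ is exponentially small as $\xi\to\infty$. But Step 1, and the lemma itself, only give $|w_0-\bar w|\lesssim\epsilon$ there. Moreover, \eqref{condition} controls $u_0-\bar u$ only in $L^1\cap H^1$ plus a Lipschitz bound. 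This allows $u_0>1$ somewhere, in which case $w_0>1\ge\bar w_{a(1+c\epsilon)}$. It also allows an algebraic tail $1-u_0\sim c_0(1+y)^{-2}$, in which case $w_0<\bar w_{a(1-c\epsilon)}$ for large $\xi$. So the initial ordering you need is false in general. Translated profiles fail for the same reason.

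\textbf{The fix is the candidate you discarded.} A barrier only has to be correctly ordered on the parabolic boundary; it need not satisfy the boundary condition. The functions $(1\pm C\epsilon)\bar w$ vanish at $\xi=0$ and tend to $1\pm C\epsilon$, which lies on the correct side of $1$ in each case. Using $-a\bar w_\xi=\bar u\,\bar w_{\xi\xi}$ and $\bar w_{\xi\xi}=2\bar u_{yy}/\bar u=-2a^2e^{-ay}/\bar u$, one finds for a constant $\mu>0$
\[
(\mu\bar w)_X-a(\mu\bar w)_\xi-\sqrt{\mu\bar w}\,(\mu\bar w)_{\xi\xi}=2a^2\mu\,e^{-ay}\bigl(\sqrt{\mu}-1\bigr).
\]
So $\mu\bar w$ is a supersolution for $\mu>1$ and a subsolution for $\mu<1$. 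Step 1 gives exactly $(1-C\epsilon)\bar w\le w_0\le(1+C\epsilon)\bar w$, and comparison propagates this for all $X$.

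Alternatively, at the level of the linear equation \eqref{eqphi}, $\bar u^2$ is itself a supersolution:
\[
-a(\bar u^2)_\xi-u(\bar u^2)_{\xi\xi}+A\bar u^2=-2\bar u_{yy}u^2/(\bar u(u+\bar u))\ge 0.
\]
Since $A\ge 0$, the maximum principle keeps $C\bar u^2\pm\phi\ge0$. This is the maximum-principle step the paper cites from \cite{Lei}. The point you missed is that the barrier gap must be of relative size $\epsilon$ uniformly in $\xi$. That is exactly what Step 1 supplies, and the suction-perturbed profiles do not provide it.
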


\begin{proof} We follow the argument in Lemma 2.2 of \cite{Lei}. And we perform a refined quantitative analysis to ensure that all constants in the resulting bounds are independent of $a$.	 

\smallskip

1. We first aim to prove
\begin{align}\label{initial0}
	\big|u_{0}^2\big(y(\xi;u_0)\big)-\bar{u}^2\big(y(\xi;\bar{u})\big)\big|\lesssim \epsilon\,\bar{u}^2\big(y(\xi;\bar{u})\big),
	\quad \forall \xi \geq0.
\end{align}
It is clear that
\begin{align*}
	\xi(y;u_0)=\int_{0}^{y}u_0(s)\mathrm{d}s =\xi(y;\bar{u})+\int_{0}^{y}\big[u_0(s)-\bar{u}(s)\big]\mathrm{d}s.
\end{align*}

\smallskip

1.1.	For $y\leq 1$, it follows from \eqref{condition} that
	\begin{equation}\label{initial1}
		\begin{aligned}
			\big|\xi(y;u_0)-\xi(y;\bar{u})\big|
			&\leq \int_{0}^{y} \big\| \partial_y(u_0-\bar{u}) \big\|_{L^\infty} s\mathrm{d}s \lesssim \frac{a\epsilon}{(1+a)^2} y^2,
		\end{aligned}
	\end{equation}
	where we have used $u_0(0)-\bar{u}(0)=0$.
	
For $y\geq 1$,  we have from \eqref{condition} that
	\begin{equation}\label{initial1.5}
		\begin{aligned}
			\big|\xi(y;u_0)-\xi(y;\bar{u})\big|\leq \big\| u_0-\bar{u} \big\|_{L^1}\lesssim \frac{a\epsilon}{(1+a)^2},
		\end{aligned}
	\end{equation}
which, together with \eqref{initial1}, yields that
	\begin{align}\label{initial2}
		\big|\xi(y;u_0)-\xi(y;\bar{u})\big|\lesssim\frac{a\epsilon}{(1+a)^2} \sigma^2(y),
	\end{align}
	where
	\begin{align}\label{sigma}
		\sigma(y)=
		\begin{cases}
			y, & \text{for} \quad y\leq 1,\\
			1, & \text{for} \quad y\geq 1.
		\end{cases}
	\end{align}
	
1.2. Since $\xi(y(\xi;u_0);u_0)=\xi$, it follows from \eqref{initial2} that
	\begin{align*}
		\xi-M\frac{a\epsilon}{(1+a)^2}\sigma^2\big(y(\xi;u_0)\big)\leq \xi\big(y(\xi;u_0);\bar{u}\big)\leq \xi+ M\frac{a\epsilon}{(1+a)^2}\sigma^2\big(y(\xi;u_0)\big),
	\end{align*}
	which yields that
	\begin{align*}
		\xi^{-1}\bigg(\xi-M\frac{a\epsilon}{(1+a)^2}\sigma^2\big(y(\xi;u_0)\big);\bar{u}\bigg)\leq y(\xi;u_0)\leq \xi^{-1}\bigg(\xi+M\frac{a\epsilon}{(1+a)^2}\sigma^2\big(y(\xi;u_0)\big);\bar{u}\bigg).
	\end{align*}
	As a consequence, we obtain
	\begin{equation}\label{inequality}
		\begin{aligned}
			|y(\xi;u_0)-y(\xi;\bar{u})|\leq \max\bigg\{	\xi^{-1}\bigg(\xi+M\frac{a\epsilon}{(1+a)^2}\sigma^2\big(y(\xi;u_0)\big);\bar{u}\bigg)-\xi^{-1}(\xi;\bar{u}), \\
			\xi^{-1}(\xi;\bar{u})-\xi^{-1}\bigg(\xi-M\frac{a\epsilon}{(1+a)^2}\sigma^2\big(y(\xi;u_0)\big);\bar{u}\bigg)\bigg\}.
		\end{aligned}
	\end{equation}
	
1.3. We now consider the case $\xi \geq \frac{1}{a}$.
	It follows from \eqref{sigma} and \eqref{inequality} that
	\begin{equation*}
		\begin{aligned}
			|y(\xi;u_0)-y(\xi;\bar{u})|\leq \max\bigg\{\xi^{-1}\bigg(\xi+M\frac{a\epsilon}{(1+a)^2};\bar{u}\bigg)-\xi^{-1}(\xi;\bar{u}), \\
			\xi^{-1}(\xi;\bar{u})-\xi^{-1}\bigg(\xi-M\frac{a\epsilon}{(1+a)^2}; \bar{u}\bigg)\bigg\}.
		\end{aligned}
	\end{equation*}
		A direct calculation gives
	\begin{align*}
		{\xi^{-1}}'(\xi;\bar{u})=\frac{1}{\xi'(y(\xi;\bar{u});\bar{u})}=\frac{1}{1-e^{-ay(\xi;\bar{u})}}\leq 1+\frac{1}{ay(\xi;\bar{u})}\lesssim 1+\frac{1}{a\xi}\lesssim 1,
	\end{align*}
	for all $\xi\geq \frac{1}{2a}$, where we have utilized the following inequality
	\begin{align}\label{inequality2}
		1-e^{-x}\geq \frac{x}{1+x}, \quad \forall\, x\geq 0,
	\end{align}
	as well as the conclusion of Lemma \ref{baru}, which states that $y(\xi;\bar{u}) \sim \xi$ whenever $\xi\geq \frac{1}{2a}$.

	Thus by the mean value theorem, we have
	\begin{align}\label{initial3}
		|y(\xi;u_0)-y(\xi;\bar{u})|\lesssim \frac{a\epsilon}{(1+a)^2} \quad \text{for} \quad \xi \geq \frac{1}{a},
	\end{align}	
which yields that
	\begin{align}\label{initial4}
			|u_{0}(y(\xi;u_0))-\bar{u}(y(\xi;\bar{u}))|&\leq 	|u_{0}(y(\xi;u_0))-\bar{u}(y(\xi;u_0))| +|\bar{u}(y(\xi;u_0))-\bar{u}(y(\xi;\bar{u}))| \nonumber \\
			&\leq \|u_0-\bar{u}\|_{L_{y}^{\infty}}
			+ \|\bar{u}'\|_{L_{y}^{\infty}}|y(\xi;u_0)-y(\xi;\bar{u})| \nonumber\\
			&\lesssim \|u_0-\bar{u}\|_{L_{y}^{2}}^{\frac{1}{2}}\|\partial_y(u_0-\bar{u})\|_{L_{y}^{2}}^{\frac{1}{2}}+\frac{a^2\epsilon}{(1+a)^2}\lesssim \epsilon,\quad \xi \geq \frac{1}{a}.
	\end{align}
		Noting \eqref{inequality2}, we have
	\begin{align*}
		\bar{u}(y(\xi;\bar{u}))=1-e^{-ay(\xi;\bar{u})}\geq \frac{ay(\xi;\bar{u})}{1+ay(\xi;\bar{u})}\gtrsim 1 \quad \text{for} \quad \xi\geq \frac{1}{a},
	\end{align*}
	which, together with \eqref{initial4}, yields that
	\begin{align}\label{initial5}
		|u_{0}(y(\xi;u_0))-\bar{u}(y(\xi;\bar{u}))|\lesssim \epsilon\bar{u}(y(\xi;\bar{u})) \quad \text{for} \quad \xi \geq \frac{1}{a}.
	\end{align}
	
\smallskip

1.4. We now consider the case $\xi\leq \frac{1}{a}$.
	It follows from \eqref{sigma} and \eqref{inequality} that
	\begin{equation}\label{initial6.5}
		\begin{aligned}
			\big|y(\xi;u_0)-y(\xi;\bar{u})\big|
			\leq \max\bigg\{\xi^{-1}\big(\xi+ \frac{Ma\epsilon}{(1+a)^2} y^2(\xi;u_0);\bar{u}\big)-\xi^{-1}(\xi;\bar{u}),\\
			\xi^{-1}(\xi;\bar{u})-\xi^{-1}\big(\xi- \frac{Ma\epsilon}{(1+a)^2} y^2(\xi;u_0);\bar{u}\big) \bigg\}.
		\end{aligned}
	\end{equation}
	Now we proceed to estimate $y^2(\xi;u_0)$.
	
	It follows from \eqref{initial2} that there exists a constant $M_1$, independent of $a$, such that
	\begin{align}\label{initial7}
		\xi\big((1-M_1\epsilon)y;\bar{u}\big)\leq \xi(y;u_0) \leq \xi\big((1+M_1\epsilon)y;\bar{u}\big).
	\end{align}
	We only verify the left-hand side of \eqref{initial7}, as the right-hand side follows by a similar argument.
	By the mean value theorem,
	\begin{align*}
		\xi(y;\bar{u})-\xi\big((1-M_1\epsilon)y;\bar{u}\big)
		= \xi'\big(y-\mu M_1 \epsilon y;\bar{u}\big) M_1\epsilon y,\quad \text{for some} \quad \mu \in [0,1].
	\end{align*}
	In view of \eqref{initial2}, it suffices to show that
	\begin{align*}
		\xi'\big(y-\mu M_1 \epsilon y;\bar{u}\big) M_1\epsilon y
		\geq \frac{Ma\epsilon}{(1+a)^2}\sigma^2(y),\quad \forall\, y\geq 0.
	\end{align*}

For the case $y\geq \frac{2}{a}$, since $\epsilon\ll 1$, it is enough to guarantee
	\begin{align*}
		\xi'\big(\tfrac{1}{a};\bar{u}\big)M_1\epsilon y\geq \frac{Ma\epsilon}{(1+a)^2}\sigma^2(y),
		\quad \text{i.e.,} \quad M_1\geq M_2\frac{\sigma^2(y)}{y}.
	\end{align*}
Due to $\frac{\sigma^2(y)}{y}\leq 1$
it is clear that  such a constant $M_1$ can be suitably chosen.
	
Next, we consider the case $y\leq \frac{2}{a}$. Recall that $\xi'(y;\bar{u})\gtrsim ay$ for $y\leq \frac{2}{a}$. For sufficiently small $\epsilon$, we have
	\begin{align*}
		\xi'\big(y-\mu M_1 \epsilon y; \bar{u}\big) M_1\epsilon y
		\geq \epsilon M_1M_3ay^2
		\geq \frac{Ma\epsilon}{(1+a)^2}\sigma^2(y)
		\quad \text{for} \quad M_1\geq M_4.
	\end{align*}
	Combining the above two cases, we can choose a suitable constant $M_1$, independent of $a$, such that inequality \eqref{initial7} holds for all $y\geq 0$.
	
	Thus we have from \eqref{initial7} that
	\begin{align}\label{initial7.5}
		\frac{\xi^{-1}(\xi;\bar{u})}{1+M_1\epsilon} \leq y(\xi;u_0)\leq \frac{\xi^{-1}(\xi;\bar{u})}{1-M_1\epsilon}.
	\end{align}
	Recall $\xi^{-1}(\xi;\bar{u})=y(\xi;\bar{u})$. Combining \eqref{initial7.5} with Lemma \ref{baru}, we obtain
	\begin{align}\label{initial8}
		y^2(\xi;u_0)\sim y^2(\xi;\bar{u}), \quad \forall \xi\leq \frac{1}{a}.
	\end{align}
	
We have from Lemma \ref{baru} that
	\begin{align*}
		\big(\xi^{-1}\big)'(\xi;\bar{u})
		=\frac{1}{\xi'\big(y(\xi;\bar{u});\bar{u}\big)}
		=\frac{1}{\bar{u}\big(y(\xi;\bar{u})\big)}
		\sim \frac{1}{a\,y(\xi;\bar{u})},
		\quad \forall \xi \leq \frac{2}{a},
	\end{align*}
	which, together with \eqref{initial6.5}, and \eqref{initial8}, yields that
	\begin{align}\label{initial9}
		\big|y(\xi;u_0)-y(\xi;\bar{u})\big|
		\lesssim \frac{M\epsilon}{(1+a)^2}\, y(\xi;\bar{u}),
		\quad \forall \xi \leq \frac{1}{a}.
	\end{align}
	
Using \eqref{initial9}, we proceed with a similar argument as for \eqref{initial4} to derive
	\begin{align}\label{initial9.5}
			\big|u_{0}\big(y(\xi;u_0)\big)-\bar{u}\big(y(\xi;\bar{u})\big)\big|
			\leq& \big|u_{0}\big(y(\xi;u_0)\big)-\bar{u}\big(y(\xi;u_0)\big)\big| + \big|\bar{u}\big(y(\xi;u_0)\big)-\bar{u}\big(y(\xi;\bar{u})\big)\big| \nonumber \\
			\leq& \big\|\partial_y(u_0-\bar{u})\big\|_{L_{y}^{\infty}}\,y(\xi;u_0) + \|\bar{u}'\|_{L_y^{\infty}} |y(\xi;u_0)-y(\xi;\bar{u}) | \nonumber\\
			\lesssim& \epsilon a y(\xi;\bar{u})
			\lesssim \epsilon \bar{u}\big(y(\xi;\bar{u})\big), \quad \forall \xi \leq \frac{1}{a},
	\end{align}
where we have used \eqref{equiva} in the last inequality.
	
\smallskip
	
1.5. Combining \eqref{initial5} and \eqref{initial9.5}, one obtains
	\begin{align}
		\big|u_{0}^2\big(y(\xi;u_0)\big)-\bar{u}^2\big(y(\xi;\bar{u})\big)\big|
		\lesssim \epsilon\, \bar{u}^2\big(y(\xi;\bar{u})\big),
		\quad \forall \xi\geq 0.
	\end{align}
	
2. Using the standard parabolic maximum principle and following the argument of Lemma 2.2 in \cite{Lei}, we can obtain
	\begin{align}\label{initial9.9}
		|\phi(X,\xi)|\lesssim \bar{u}^2\big(y(\xi;\bar{u})\big).
	\end{align}
	Noting \eqref{initial9.9} and $\epsilon\ll 1$, one gets
	\begin{align*}
		u^2\big(X,y(X,\xi;u)\big) \sim \bar{u}^2\big(y(\xi;\bar{u})\big),
	\end{align*}
	which implies that
	\begin{align}
		|\rho(X,\xi)|\lesssim \frac{|\phi(X,\xi)|}{\bar{u}}
		\lesssim \bar{u}\big(y(\xi;\bar{u})\big).
	\end{align}
	Therefore the proof of Lemma \ref{u0simbaru} is completed.
\end{proof}

	\begin{lem}[\cite{Hardy}]\label{lem1}
	Let $f$ be an absolutely continuous function on $[0,\infty)$,  and $s\neq\frac{1}{2}$. Then it holds that
		\begin{align}
		\Big\| \frac{f}{\xi^s}\Big\|_{L^2}\leq
		\begin{cases}
		\displaystyle	\frac{2}{2s-1} 	\Big\| \frac{f'}{\xi^{s-1}}\Big\|_{L^2},  & if\ s>\frac{1}{2} \ and\  f(0)=0, \\[10pt]
			\displaystyle
			\frac{2}{1-2s}\Big\| \frac{f'}{\xi^{s-1}}\Big\|_{L^2}, & if\ s<\frac{1}{2} \ and\  f(+\infty)=0.
		\end{cases} 
		\end{align}
	\end{lem}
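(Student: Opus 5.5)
The plan is to give the standard weighted Hardy argument: integrate by parts against a power weight, then apply Cauchy--Schwarz. The constants $\frac{2}{|2s-1|}$ are exactly what this produces. I first prove the inequality for $f\in C_c^\infty(0,\infty)$, or for $f$ with $f'/\xi^{s-1}\in L^2$ and suitable truncation, and then pass to general absolutely continuous $f$ by approximation or monotone convergence. If the right-hand side is infinite there is nothing to prove, so I assume it is finite throughout.

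\textbf{Case $s>\frac12$, $f(0)=0$.} Fix $0<\eta<R<\infty$ and write $\xi^{-2s}=\frac{1}{1-2s}\frac{d}{d\xi}\xi^{1-2s}$. Integrating by parts gives
\begin{align*}
\int_\eta^R \frac{f^2}{\xi^{2s}}\,d\xi=\frac{1}{1-2s}\Big[\xi^{1-2s}f^2\Big]_\eta^R+\frac{2}{2s-1}\int_\eta^R \frac{f f'}{\xi^{2s-1}}\,d\xi .
\end{align*}
Since $1-2s<0$, the boundary term at $R$ has a favourable sign and may be dropped. The boundary term at $\eta$ is $\frac{1}{2s-1}\eta^{1-2s}f(\eta)^2$. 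To control it, use $f(0)=0$ and Cauchy--Schwarz:
\begin{align*}
|f(\eta)|\le \int_0^\eta |f'|\le \Big(\int_0^\eta \frac{f'^2}{\xi^{2s-2}}\Big)^{1/2}\Big(\int_0^\eta \xi^{2s-2}\Big)^{1/2}.
\end{align*}
The last factor equals $\eta^{2s-1}/(2s-1)$. Hence $\eta^{1-2s}f(\eta)^2\lesssim\int_0^\eta \xi^{2-2s}f'^2\,d\xi\to0$ as $\eta\to0$. Next apply Cauchy--Schwarz to the remaining integral, noting that $\frac{ff'}{\xi^{2s-1}}=\frac{f}{\xi^s}\cdot\frac{f'}{\xi^{s-1}}$. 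This gives $\|f\xi^{-s}\|^2_{L^2(\eta,R)}\le o(1)+\frac{2}{2s-1}\|f\xi^{-s}\|_{L^2(\eta,R)}\|f'\xi^{1-s}\|_{L^2}$. The left-hand side is finite on $(\eta,R)$, so I divide by it and then let $\eta\to0$ and $R\to\infty$.

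\textbf{Case $s<\frac12$, $f(+\infty)=0$.} The same integration by parts applies, but now $1-2s>0$. The boundary term at $\eta$ carries the sign $-\frac{1}{1-2s}\eta^{1-2s}f(\eta)^2\le0$ and is dropped. The term at $R$, namely $\frac{1}{1-2s}R^{1-2s}f(R)^2$, must be shown to vanish. I write $f(R)=-\int_R^\infty f'$ and use Cauchy--Schwarz with weight $\xi^{2s-2}$, which is integrable at infinity since $2s-2<-1$:
\begin{align*}
|f(R)|^2\le \frac{R^{2s-1}}{1-2s}\int_R^\infty \xi^{2-2s}f'^2 .
\end{align*}
Hence $R^{1-2s}f(R)^2\to0$, and the Cauchy--Schwarz and division step concludes exactly as before with the constant $\frac{2}{1-2s}$.

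The only delicate point is the vanishing of the boundary terms, which is where the hypotheses $f(0)=0$ and $f(\infty)=0$ are used. The integral representation plus Cauchy--Schwarz handles this, provided the right-hand side is finite. A secondary point is the division step. Working on truncated intervals $(\eta,R)$ keeps the left-hand side finite there, so the division is justified, and the full inequality then follows by monotone convergence.
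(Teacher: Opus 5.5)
The paper does not prove this lemma; it quotes it as the classical weighted Hardy inequality. Your argument is the standard proof, it is correct, and it gives the sharp constant $2/|2s-1|$ that the paper later relies on in Lemma~\ref{Lower base}. The argument integrates $\xi^{-2s}f^2$ by parts on $(\eta,R)$, kills the one unfavourable boundary term using $f(0)=0$ or $f(+\infty)=0$ together with Cauchy--Schwarz, and then applies Cauchy--Schwarz to $\int ff'\xi^{1-2s}$.

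The only step to tidy is the last one. Write $A=\|f\xi^{-s}\|_{L^2(\eta,R)}$, $B=\|f'\xi^{1-s}\|_{L^2}$ and $c=2/|2s-1|$, and let $\varepsilon_\eta\to 0$ denote the boundary term. Because of $\varepsilon_\eta$ you cannot literally divide $A^2\le \varepsilon_\eta+cAB$ by $A$. Instead, solve the quadratic to get $A\le cB+\sqrt{\varepsilon_\eta}$, then let $\eta\to0$ and $R\to\infty$. This also makes your preliminary reduction to $C_c^\infty$ functions unnecessary, since the truncated argument already handles general absolutely continuous $f$.
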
 
We now establish the following sharp Hardy-type estimate, which plays a crucial role in the subsequent proof.
	\begin{lem}\label{Lower base}Given any fixed constant $\beta\neq1$.
	  Assume $f$ is absolutely continuous in every bounded interval of  $\mathbb{R}$ and $f(0)=0$, then   \begin{align}\label{hardy-type}
	  	\Big\| \frac{\bar{u}_y f\chi_{\frac{\delta}{a}}}{\bar{u}^{\beta}}\Big\|_{L_{\xi}^2}
	  	\leq& \frac{1}{C_2(\delta)^{2}}\frac{2}{|\beta-1|}   \left(\Big\|  \frac{\bar{u}_y f\chi_{\frac{\delta}{a}}}{\bar{u}^{\beta-1}}\Big\|_{L_{\xi}^2}    
	  	+        \Big\|  \frac{ f_\xi\chi_{\frac{\delta}{a}}}{\bar{u}^{\beta-2}}\Big\|_{L_{\xi}^2}   +\frac{C}{\delta}  \Big\| \frac{\bar{u}_y f}{\bar{u}^{\beta-2}}   \Big\|_{L_{\xi\geq \frac{\delta}{a}}^2}                        \right)   .
	  \end{align}where  $\chi_{\frac{\delta}{a}}$ is the cut-off function defined in \eqref{cut},  $C>0$ is a constant depending only on $\chi'$ and the constant $C_2(\delta)$ satisfies \begin{align}\label{sharp constant}
	  \lim_{\delta\to 0}C_2(\delta)=\sqrt{2}.
	  \end{align} 
	 Here we remark that \eqref{sharp constant} is essential for estimates \eqref{F11} and \eqref{weighted F11}.
	\end{lem}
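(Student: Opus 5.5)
The plan is to reduce the estimate to the classical Hardy inequality of Lemma~\ref{lem1} with power weights in $\xi$. The bridge is the near-wall equivalence $\bar{u}\approx\sqrt{2a\xi}$ from Remark~\ref{H,h}, and the factor $\bar u_y$ is handled as an explicit function of $\bar u$. Since $\bar u=1-e^{-ay}$, we have $\bar u_y=a(1-\bar u)$. On the support of $\chi_{\delta/a}$, that is $\xi\le 2\delta/a$, Lemma~\ref{baru} gives $\bar u\lesssim\sqrt{\delta}$, so $\bar u_y=a(1+O(\sqrt\delta))$. Consequently the weight $\bar u_y/\bar u^{\beta}$ equals $a\,(a\xi)^{-\beta/2}$ times a factor lying between $h(2\delta)^{-\beta}$ and $H(2\delta)^{-\beta}$, up to $1+O(\sqrt\delta)$. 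Both bounds tend to $2^{-\beta/2}$. The same reasoning applies to the weights $1/\bar u^{\beta-1}$ and $1/\bar u^{\beta-2}$.

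Set $g=f\chi_{\delta/a}$, which satisfies $g(0)=0$ and has compact support. Then
\[
\Big\|\frac{\bar u_y g}{\bar u^\beta}\Big\|_{L^2_\xi}\le \frac{a^{1-\beta/2}(1+O(\sqrt\delta))}{h(2\delta)^{\beta}}\Big\|\frac{g}{\xi^{\beta/2}}\Big\|_{L^2_\xi}.
\]
Apply Lemma~\ref{lem1} with $s=\beta/2$. If $\beta>1$, use the case $g(0)=0$; if $\beta<1$, use the case $g(\infty)=0$, which holds by compact support. In both cases
\[
\Big\|\frac{g}{\xi^{\beta/2}}\Big\|_{L^2}\le\frac{2}{|\beta-1|}\Big\|\frac{g_\xi}{\xi^{\beta/2-1}}\Big\|_{L^2}.
\]
Now write $g_\xi=f_\xi\chi_{\delta/a}+f\,\chi'(\xi a/\delta)\,a/\delta$. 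Converting $\xi^{1-\beta/2}$ back to $\bar u$ introduces $(a\xi)^{1-\beta/2}\le H(2\delta)^{\beta-2}\bar u^{2-\beta}$ when $\beta<2$. When $\beta>2$ one uses $h$ instead, which produces a different power of $H$ or $h$.

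This gives three contributions. The first is $a^{1-\beta/2}\cdot a^{\beta/2-1}\|f_\xi\chi_{\delta/a}/\bar u^{\beta-2}\|$. The powers of $a$ cancel, and this term appears on the right-hand side of \eqref{hardy-type}. The second comes from the cutoff derivative and is supported on $\delta/a\le\xi\le2\delta/a$. There $\bar u_y\sim a$, so $a/\delta\lesssim \bar u_y/\delta$, and it is bounded by $\frac{C}{\delta}\|\bar u_y f/\bar u^{\beta-2}\|_{L^2_{\xi\ge\delta/a}}$, with $C$ depending on $\|\chi'\|_\infty$ and the comparability constants. The term $\|\bar u_y f\chi/\bar u^{\beta-1}\|$ on the right-hand side is not needed in this form. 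However, it allows slack: the error term $O(\sqrt\delta)\,a\, f$ coming from $\bar u_y\ne a$ can be absorbed into it, so including it costs nothing.

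The main obstacle is the bookkeeping of the sharp constants. After the steps above, the prefactor is $\frac{2}{|\beta-1|}$ multiplied by a combination of the form $h(2\delta)^{-\beta}H(2\delta)^{\beta-2}(1+O(\sqrt\delta))$, or its mirrored version. One then defines $C_2(\delta)$ through $C_2(\delta)^{-2}$ equal to this combination. Its limit is $2^{-\beta/2}\cdot2^{(\beta-2)/2}=2^{-1}$ by Remark~\ref{H,h}, which gives exactly $\lim_{\delta\to0}C_2(\delta)=\sqrt2$. The care required is to check this limit for every sign of $\beta-2$, choosing $h$ or $H$ according to the direction of each inequality, and to make sure no $a$-dependence remains in the constants.
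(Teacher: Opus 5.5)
Your proof is correct, but it takes a genuinely different route from the paper's in one respect. The paper keeps $\bar u_y$ inside the function and applies Lemma~\ref{lem1} with $s=\beta/2$ to $F=\bar u_y f\chi_{\delta/a}$. Differentiating $F$ produces the term $\frac{\bar u_{yy}}{\bar u}f\chi_{\delta/a}$, and since $\bar u_{yy}=-a\bar u_y$, this term is exactly the source of $\|\bar u_y f\chi_{\delta/a}/\bar u^{\beta-1}\|_{L^2_\xi}$ on the right of \eqref{hardy-type}. The cutoff term then carries $\bar u_y$ automatically, with $C=\|\chi'\|_\infty$. The paper uses $\bar u_y\le a$ only afterwards, on the $f_\xi$ term.

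You instead use $\bar u_y\le a$ before applying Hardy, and apply Lemma~\ref{lem1} to $g=f\chi_{\delta/a}$. No derivative of $\bar u_y$ ever appears. The price is that you need a lower bound $\bar u_y\gtrsim a$ on the strip $\delta/a\le\xi\le 2\delta/a$ to put $\bar u_y$ back into the cutoff term. This bound holds with an absolute constant for $\delta\le 1$, which covers every use in the paper.

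Your route buys a strictly stronger inequality: the term $\|\bar u_y f\chi_{\delta/a}/\bar u^{\beta-1}\|_{L^2_\xi}$ is not needed at all. As a consequence, the second applications of the lemma in \eqref{F09.5} and \eqref{3.17-2} would become unnecessary. You are also more careful than the paper in noting that $\beta<1$ requires the $g(\infty)=0$ case of Lemma~\ref{lem1}, which compact support supplies.

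Two corrections to your bookkeeping.

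First, $\bar u_y=ae^{-ay}\le a$ holds exactly. So the factors $1+O(\sqrt\delta)$ and the ``slack'' discussion are superfluous in the upper bound; comparability is needed only as the lower bound on the transition strip.

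Second, for $\beta<2$ your conversion goes the wrong way. From $\bar u\ge h(2\delta)\sqrt{a\xi}$ one gets $(a\xi)^{1-\beta/2}\le h(2\delta)^{\beta-2}\bar u^{2-\beta}$. The factor $H(2\delta)^{\beta-2}$ is the correct one only for $\beta>2$. With the right choices your constant becomes $h(2\delta)^{-2}$ for $0<\beta\le 2$ and $h(2\delta)^{-\beta}H(2\delta)^{\beta-2}$ for $\beta\ge 2$. This is exactly the paper's $C_2(\delta)^{-2}$. The limit $\sqrt2$ is unaffected, since both $h$ and $H$ tend to $\sqrt2$.
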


\begin{proof}
Using Remark \ref{H,h} and Lemma \ref{lem1}, one has
	\begin{align*}
		\bigg\| \frac{\bar{u}_y f\chi_{\frac{\delta}{a}}}{\bar{u}^{\beta}}\bigg\|_{L_{\xi}^2}
		\leq& \frac{1}{h(2\delta)^{\beta}} \bigg\| \frac{\bar{u}_y f\chi_{\frac{\delta}{a}}}{(a\xi)^{\frac{\beta}{2}}}\bigg\|_{L_{\xi}^2}\\
		\leq& \frac{1}{h(2\delta)^{\beta}}\frac{2}{|\beta-1|}  \bigg\| \left(\frac{\bar{u}_{yy}}{\bar{u}}f\chi_{\frac{\delta}{a}}+\bar{u}_yf_\xi \chi_{\frac{\delta}{a}}+\frac{a}{\delta}\bar{u}_yf \chi_{\frac{\delta}{a}}' \right) \frac{\xi}{(a\xi)^{\frac{\beta}{2}}}\bigg\|_{L_{\xi}^2}\\	
		=& \frac{1}{h(2\delta)^{\beta}}\frac{2}{|\beta-1|}    \bigg\| \left(\frac{-\bar{u}_{y}}{\bar{u}}f\chi_{\frac{\delta}{a}}+\bar{u}_y\frac{f_\xi}{a} \chi_{\frac{\delta}{a}}+\frac{1}{\delta}\bar{u}_yf \chi_{\frac{\delta}{a}}' \right) \frac{1}{(a\xi)^{\frac{\beta}{2}-1}}\bigg\|_{L_{\xi}^2}\\
		\leq& \frac{1}{C_2(\delta)^{2}}\frac{2}{|\beta-1|}    \bigg\| \left(\frac{-\bar{u}_{y}}{\bar{u}}f\chi_{\frac{\delta}{a}}+\bar{u}_y\frac{f_\xi}{a} \chi_{\frac{\delta}{a}}+\frac{1}{\delta}\bar{u}_yf \chi_{\frac{\delta}{a}}' \right) \frac{1}{\bar{u}^{\beta-2}}\bigg\|_{L_{\xi}^2}\\
		\leq& \frac{1}{C_2(\delta)^{2}}\frac{2}{|\beta-1|} \Bigg( \bigg\|  \frac{\bar{u}_y f\chi_{\frac{\delta}{a}}}{\bar{u}^{\beta-1}}\bigg\|_{L_{\xi}^2}
		+ \bigg\|  \frac{ f_\xi\chi_{\frac{\delta}{a}}}{\bar{u}^{\beta-2}}\bigg\|_{L_{\xi}^2}
		+\frac{C}{\delta}  \bigg\| \frac{\bar{u}_y f}{\bar{u}^{\beta-2}}   \bigg\|_{L_{\xi\geq \frac{\delta}{a}}^2} \Bigg),
	\end{align*}
where
	\begin{equation}
		C_2(\delta)^2=
		\begin{cases}
			\displaystyle h(2\delta)^2, & \beta\le 2,\\
		\displaystyle 	\frac{h(2\delta)^\beta}{H(2\delta)^{\beta-2}}, & \beta\ge 2.   
		\end{cases} 
	\end{equation} 
	Therefore the proof of Lemma \ref{Lower base} is completed.
\end{proof}

 \subsection{Proof of Theorem \ref{Main}}\label{subsection2.2}
	\begin{lem}\label{lem2}
		Let $\phi$ solve  \eqref{eqphi}. Under the assumptions of Theorem \ref{Main}, there exists a sufficiently small constant $\epsilon>0$ such that
		\begin{align}\label{F00}
		\sup_{X \geq0}
		\int\phi^2  + \int\int u \phi_{\xi}^2 
			+\int\int A \phi^2  \lesssim \|\phi_0\|^2_{L^2_\xi} \lesssim \|\phi_0\|_{\mathbf{Y}_{0,0}}^2.
		\end{align}
	\end{lem}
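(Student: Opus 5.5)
Testing \eqref{eqphi} against $\phi$ over $\xi\in(0,\infty)$ gives the basic $L^2$ energy identity directly, with no weights. The three resulting terms are handled as follows.

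\textbf{The transport term.} We have
\[
-a\int \phi\phi_\xi=-\tfrac a2\big[\phi^2\big]_0^\infty=0
\]
by the boundary conditions $\phi(X,0)=\phi(X,\infty)=0$. This is the favorable direction of the suction convection: it contributes no bad term. A weighted version would even give a positive contribution.

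\textbf{The diffusion term.} Integrating by parts,
\[
-\int u\phi_{\xi\xi}\phi=\int u\phi_\xi^2+\int u_\xi\phi\phi_\xi .
\]
The boundary terms vanish since $u(X,0)=0$ and $\phi\to0$ at infinity (with $\phi_\xi$ bounded by Theorem \ref{oleinik}). Since $u_\xi=u_y/u$, the cross term is $\frac12\int (u_y/u)\,\partial_\xi(\phi^2)$. Integrating by parts again produces $-\frac12\int \partial_\xi(u_y/u)\,\phi^2=-\frac12\int \frac{u_{yy}u-u_y^2}{u^3}\phi^2$.

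\textbf{The potential term.} The term $\int A\phi^2$ is kept on the left. Note $A=-2\bar u_{yy}/(\bar u(u+\bar u))\ge0$, since $\bar u_{yy}=-a^2e^{-ay}<0$.

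The main obstacle is the lower-order term $-\frac12\int \partial_\xi(u_y/u)\phi^2$, whose sign is not clear a priori. To avoid it, I would instead write $u\phi_{\xi\xi}=\partial_\xi(u\phi_\xi)-u_\xi\phi_\xi$. It is simpler still to note that $u=\bar u+\epsilon\rho$, with $|\rho|\lesssim\bar u$ by Lemma \ref{u0simbaru}. For the model coefficient $\bar u$ one has $\bar u_\xi=\bar u_y/\bar u=a e^{-ay}/\bar u$, so
\[
\partial_\xi \bar u_\xi=\frac{\bar u_{yy}\bar u-\bar u_y^2}{\bar u^3}<0 .
\]
Hence $-\frac12\int\partial_\xi(\bar u_\xi)\phi^2\ge0$ is a good term. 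The perturbative part, involving $\epsilon\rho$, is controlled as follows. Using $u\sim\bar u$ and $|\epsilon\rho_\xi|$-type bounds, it is bounded by $\epsilon\int\frac{\bar u_y^2}{\bar u^3}\phi^2$-like quantities. These are handled by the Hardy inequality (Lemma \ref{lem1}) near $\xi=0$, using $\bar u\sim\sqrt{a\xi}$ and $|\phi|\lesssim\bar u^2$ from Lemma \ref{u0simbaru}. The result is bounded by $\epsilon\int u\phi_\xi^2$ plus $\epsilon\int\phi^2$ away from the wall, and both are absorbed for $\epsilon$ small. If this bound on the perturbative part fails, I would fall back on comparing with $\int A\phi^2$: near the wall, $A\approx a^2/\bar u^2\gtrsim \bar u_y^2/\bar u^2$, which dominates the bad weight after one Hardy step.

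Integrating in $X$ then yields \eqref{F00}, with the first inequality $\lesssim\|\phi_0\|_{L^2_\xi}^2$. The final inequality $\|\phi_0\|_{L^2_\xi}\lesssim\|\phi_0\|_{\mathbf Y_{0,0}}$ is immediate because $u_0\le C$ (indeed $u_0\lesssim1$ by Lemma \ref{u0simbaru}), so $u_0^{-2\lambda}\gtrsim1$.
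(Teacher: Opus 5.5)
\textbf{Gaps in the absorption step.} Your skeleton matches the paper's. You test \eqref{eqphi} with $\phi$, keep $-\frac12\int\bar u_{\xi\xi}\phi^2\ge0$ (this is \eqref{F03.5}), and treat only $\epsilon\int\rho_\xi\phi\phi_\xi$ perturbatively. On the $\epsilon\rho$ part you must integrate by parts only once, since a second integration produces $\rho_{\xi\xi}$, which is not controlled at this level. Two steps, however, do not close as you state them.

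First, you have no pointwise bound on $\rho_\xi$, because Lemma \ref{u0simbaru} controls only $\phi$ and $\rho$. Your ``$|\epsilon\rho_\xi|$-type bounds'' must therefore be replaced by the identity
\[
\rho_\xi=\frac{\phi_\xi}{2u}-\frac{\bar u_y}{\bar u\,u}\,\rho ,
\]
obtained by differentiating $\phi=(2\bar u+\epsilon\rho)\rho$. Combined with $|\phi|\lesssim\bar u^2$, $|\rho|\lesssim\bar u$ and $u\sim\bar u$, it gives $\epsilon\bigl|\int\rho_\xi\phi\phi_\xi\bigr|\lesssim\epsilon\int\bar u\phi_\xi^2+\epsilon\int\frac{\bar u_y}{\bar u}|\phi\phi_\xi|$.

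Second, a remainder $\epsilon\int\phi^2$ away from the wall cannot be absorbed if it is unweighted. The left side has no zeroth-order dissipation for large $\xi$: both $A$ and $-\bar u_{\xi\xi}$ decay like $a^2e^{-ay}$, and $\int u\phi_\xi^2$ admits no Poincar\'e inequality on the half-line. You would be left with Gronwall and a factor $e^{C\epsilon X}$, which destroys the uniform-in-$X$ bound. What saves the argument is that the remainder actually carries the weight $\bar u_y^2/\bar u^3$, or, for the cut-off derivative produced by Hardy, is supported where $\xi\sim1/a$. In both cases the weight is $\lesssim A$: use $\bar u_y^2\le|\bar u_{yy}|$ and $\bar u\sim1$ away from the wall, and $A\sim a^2$ on $\xi\sim1/a$. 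That is your fallback, and it is exactly the paper's route through Lemma \ref{Lower base}. But it must be used away from the wall as well, not only near it.

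\textbf{A shorter route.} You were one observation away from a proof shorter than the paper's. The good term you isolated satisfies
\[
-\tfrac12\bar u_{\xi\xi}=\tfrac12\Bigl(\frac{|\bar u_{yy}|}{\bar u^{2}}+\frac{\bar u_y^{2}}{\bar u^{3}}\Bigr)\ge\frac{\bar u_y^{2}}{2\bar u^{3}} .
\]
After Cauchy--Schwarz, $\epsilon\int\frac{\bar u_y}{\bar u}|\phi\phi_\xi|\le\frac{\epsilon}{2}\int\bar u\,\phi_\xi^2+\frac{\epsilon}{2}\int\frac{\bar u_y^2}{\bar u^3}\phi^2$. This is absorbed directly by $\int u\phi_\xi^2$ and $-\frac12\int\bar u_{\xi\xi}\phi^2$, near and far from the wall alike, with no Hardy inequality and no use of $A$.

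The paper instead discards this term in \eqref{F04} and compensates with Lemma \ref{Lower base} and $\int A\phi^2$. That detour is unnecessary for this lemma. It does rehearse the mechanism that becomes indispensable in Lemma \ref{lem3}, where the commutator term $k\int\phi\phi_\xi\,\bar u_y/\bar u^{k+2}$ has an $O(1)$ coefficient and the sharp Hardy constant genuinely matters.
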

	
\begin{proof}
Multiplying \eqref{eqphi} by $\phi$ and noting $u=\bar{u}+\epsilon \rho$, we obtain the energy identity
	\begin{align}\label{F01}
		\frac{1}{2} \frac{d}{d X} \int \phi^2  +\int u \phi_{\xi}^2-\frac{1}{2}\int \bar{u}_{\xi\xi}\phi^2
		+\int A \phi^2 =- \epsilon \int \rho_\xi\phi\phi_{\xi}.
	\end{align}
	For the  term on RHS, noting $\phi=(2\bar{u}+\epsilon\rho)\rho$, we have
\begin{align}\label{rhoxi}
	\rho_{\xi}
	=\frac{\phi_{\xi}}{2u} - \frac{\bar{u}_{\xi}}{u}\rho
	=\frac{\phi_{\xi}}{2u} - \frac{\bar{u}_{y}}{\bar{u}u}\rho.
\end{align}
which, together with Lemma \ref{u0simbaru}, yields that
\begin{align}\label{F02}
	\bigg|\int  \rho_\xi\phi_{\xi}\phi\,\mathrm{d}\xi\bigg|
	\lesssim \int \bar{u}\,\phi_{\xi}^2\mathrm{d}\xi
	+ \int  \frac{\bar{u}_{y}\,|\phi\phi_\xi|}{\bar{u}}\mathrm{d}\xi.
\end{align}
Noting $\bar{u}\sim u$, $\bar{u}_{y}^2\leq |\bar{u}_{yy}|$ and \eqref{hardy-type}, one has
	\begin{align}\label{F03}
			\int  \frac{\bar{u}_{y}|\phi\phi_\xi|}{\bar{u}}
			\leq& \Big\|\sqrt{u} \phi_{\xi} \Big\|_{L_{\xi}^2} \Big\| \frac{\bar{u}_{y}\phi}{\bar{u}^{\frac{3}{2}}}\Big\|_{L_{\xi}^2}  
			\leq \Big\|\sqrt{u} \phi_{\xi} \Big\|_{L_{\xi}^2}  \left[ \Big\| \frac{\bar{u}_{y}\phi  \chi_\frac{1}{a} }{\bar{u}^{\frac{3}{2}}}\Big\|_{L_{\xi}^2}+\Big\| \frac{\bar{u}_{y}\phi}{\bar{u}^{\frac{3}{2}}}\Big\|_{L_{\xi\geq\frac{1}{a}}^2}  \right] \nonumber\\
			\lesssim& \Big\| \sqrt{u}\phi_{\xi} \Big\|_{L_{\xi}^2}  \left[ \Big\|  \frac{\bar{u}_y \phi\chi_{\frac{1}{a}}}{\bar{u}^{\frac{1}{2}}}\Big\|_{L_{\xi}^2}    
			+ \Big\|  \sqrt{\bar{u}}\phi_\xi\chi_{\frac{1}{a}}\Big\|_{L_{\xi}^2}   
			+ \Big\| \sqrt{\bar{u}}\bar{u}_y \phi   \Big\|_{L_{\xi\geq \frac{\delta}{a}}^2}                 
			+\Big\| \frac{\bar{u}_{y}\phi}{\bar{u}^{\frac{3}{2}}}\Big\|_{L_{\xi\geq\frac{1}{a}}^2}   \right] \nonumber\\
			\lesssim& \Big\| \sqrt{u}\phi_{\xi} \Big\|_{L_{\xi}^2}  \left[ \Big\|\sqrt{u} \phi_\xi\Big\|_{L_{\xi}^2}+ \left(\int A\phi^2 \right)^{\frac{1}{2}}  \right] \nonumber\\
			\lesssim& \Big\|\sqrt{u} \phi_\xi\Big\|_{L_{\xi}^2}^2+ \int A \phi^2.
	\end{align}
Noting
\begin{align}\label{F03.5}
	\bar{u}_{\xi\xi}
	=\left(\frac{\bar{u}_y}{\bar{u}}\right)_\xi
	=\frac{\bar{u}_{yy}}{\bar{u}^2}-\frac{\bar{u}_{y}^2}{\bar{u}^3}\leq0,
\end{align}
then substituting \eqref{F02}-\eqref{F03} into \eqref{F01}, and using \eqref{F03.5}, one has 
\begin{align}\label{F04}
	\frac{d}{d X} \int \phi^2
	+ \int u\phi_{\xi}^2
	+ \int A \phi^2
	\le 0,
\end{align}where we have used $0<\epsilon\ll 1$.
	
	Integrating \eqref{F04}  over $[0,X]$,
	we derive the desired energy estimate \eqref{F00}. Therefore the proof of Lemma \ref{lem2} is completed.
\end{proof}

	\begin{lem}\label{lem3}
		Let $\phi$ solve \eqref{eqphi} and $\lambda\in [1,\frac32)$. Under the assumptions in Theorem \ref{Main}, it holds that
		\begin{align}\label{F05}
		\| \phi\|_{\mathbf{Y}_0}^2\equiv \left\|
		\frac{\phi_\xi}{u^{\lambda-\frac{1}{2}}}
		\right\|_{L^2_{X, \xi }}^2+ \ \sup_{X \ge 0}
		\left\|
		\frac{\phi}{u^{\lambda}}
		\right\|_{L^2_{\xi}}^2 \lesssim  \| \phi_0\|_{\mathbf{Y}_{0,0}}^{2},
		\end{align}
		provided $0<\epsilon\ll 1$.
	\end{lem}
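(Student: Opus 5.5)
We run a weighted energy estimate by testing \eqref{eqphi} against $\phi/\bar{u}^{2\lambda}$. The weight $\bar{u}^{-2\lambda}$ depends only on $\xi$, and Lemma~\ref{u0simbaru} gives $u\sim\bar{u}$, so all weights $u^{-s}$ in $\mathbf{Y}_0$ may be replaced by $\bar{u}^{-s}$ at the cost of absolute constants. Multiplying by $\phi\bar{u}^{-2\lambda}$ and integrating in $\xi$, the boundary terms vanish because $\phi(X,0)=0$ and, by Lemma~\ref{u0simbaru}, $|\phi|\lesssim\bar{u}^2$, so $\phi\bar{u}^{-2\lambda}$ stays bounded near $\xi=0$ for $\lambda<3/2$. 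We obtain
\[
\frac12\frac{d}{dX}\int\frac{\phi^2}{\bar{u}^{2\lambda}}+\int\frac{u\phi_\xi^2}{\bar{u}^{2\lambda}}+\int\frac{A\phi^2}{\bar{u}^{2\lambda}}
=-\frac{a}{2}\int(\bar{u}^{-2\lambda})_\xi\phi^2-\int u_\xi\frac{\phi\phi_\xi}{\bar{u}^{2\lambda}}-\int u\,(\bar{u}^{-2\lambda})_\xi\phi\phi_\xi .
\]

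\textbf{Convection and zeroth-order terms.} Since $(\bar{u}^{-2\lambda})_\xi=-2\lambda\bar{u}_y\bar{u}^{-2\lambda-2}<0$, the convection term $-a\phi_\xi$ yields a term of favorable sign, which we simply drop or keep. The term with $A\ge0$ (recall $\bar{u}_{yy}<0$) is also positive.

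\textbf{Splitting $u=\bar{u}+\epsilon\rho$.} The principal remaining contributions are
\[
(2\lambda-1)\int\frac{\bar{u}_y\phi\phi_\xi}{\bar{u}^{2\lambda+1}}
\]
and $\epsilon$-perturbations. The $\epsilon$ terms involve $\rho_\xi$, which we express through \eqref{rhoxi}. Together with $|\rho|\lesssim\bar{u}$ from Lemma~\ref{u0simbaru}, they are bounded by $\epsilon$ times the same quantities as the principal term, so they are absorbed once $\epsilon\ll1$.

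\textbf{The critical cross term.} We apply Cauchy--Schwarz in the form \eqref{idea114}, bounding the cross term by
\[
(2\lambda-1)\,\|\phi_\xi\bar{u}^{-(\lambda-\frac12)}\|_{L^2_\xi}\,\|\bar{u}_y\phi\,\bar{u}^{-(\lambda+\frac32)}\|_{L^2_\xi}.
\]
We split the second factor with $\chi_{\delta/a}$. On $\xi\ge\delta/a$, Lemma~\ref{baru} gives $\bar{u}\sim_\delta1$, so that piece is controlled by $C_\delta\|\phi\|_{L^2}$, whose time integral is bounded by Lemma~\ref{lem2}, or by $\int A\phi^2$. Near the wall we apply Lemma~\ref{Lower base} with $\beta=\lambda+\frac32$. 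This gives the constant $\frac{1}{C_2(\delta)^2}\frac{2}{\lambda+1/2}$ in front of $\|\phi_\xi\chi\bar{u}^{-(\lambda-\frac12)}\|$, plus lower-order terms $\|\bar{u}_y\phi\chi\bar{u}^{-(\lambda+\frac12)}\|$ and far-field terms. By \eqref{sharp constant}, the coefficient $(2\lambda-1)\cdot\frac{4}{(2\lambda+1)C_2(\delta)^2}$ tends to $\frac{2(2\lambda-1)}{2\lambda+1}<1$ for $\lambda<3/2$. Choosing $\delta$ small therefore absorbs the leading part into the dissipation $\int\phi_\xi^2\bar{u}^{1-2\lambda}$ with a margin.

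\textbf{Lower-order term near the wall.} The term $\|\bar{u}_y\phi\chi\bar{u}^{-(\lambda+\frac12)}\|$ is one power of $\bar{u}$ milder. We apply Lemma~\ref{Lower base} again with $\beta=\lambda+\frac12$, where $\beta\ne1$ since $\lambda\ge1$. This bounds it by $\|\phi_\xi\bar{u}^{-(\lambda-\frac32)}\chi\|\lesssim\delta^{1/2}$-small multiples of the dissipation (because $\bar{u}\lesssim\sqrt\delta$ on the support), plus $\|\bar{u}_y\phi\bar{u}^{-(\lambda-\frac12)}\chi\|$. This last quantity has weight at most $\bar{u}^{-1/2}$ for $\lambda<3/2$ and is controlled by $\int A\phi^2$ from Lemma~\ref{lem2}; alternatively, we iterate once more with Hardy. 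Young's inequality then handles the products.

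\textbf{Conclusion.} We integrate in $X$ and use Lemma~\ref{lem2} to bound all lower-order remainders by $\|\phi_0\|^2_{L^2}\lesssim\|\phi_0\|^2_{\mathbf{Y}_{0,0}}$. Converting $\bar{u}$ back to $u$ gives \eqref{F05}.

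\textbf{Main obstacle.} The delicate point is the sharp absorption of the critical cross term. The constant must be strictly below $1$, which forces $\lambda<3/2$ and requires the limit $C_2(\delta)\to\sqrt2$. The $\epsilon\rho$ perturbation must also not destroy this margin; it is handled by taking $\epsilon$ small after $\delta$ is fixed.
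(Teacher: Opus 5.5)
There is a genuine gap: you have the sign of the convection term backwards, and as a result you drop a term that works against you. In your own identity the term $-\frac{a}{2}\int(\bar u^{-2\lambda})_\xi\phi^2$ sits on the right-hand side. Since $(\bar u^{-2\lambda})_\xi<0$, this term is nonnegative. The reason is that $-a\phi_\xi$ transports $\phi$ toward $\xi=0$, where your weight blows up; convection is favorable only for weights increasing in $\xi$, as with $(b_{m,a}+\xi)^k$ in Lemma~\ref{RATE2}. Using $a\bar u_y=-\bar u_{yy}$, the term equals $\lambda\int|\bar u_{yy}|\phi^2\bar u^{-2\lambda-2}$. The $A$-term you keep on the left equals $\int|\bar u_{yy}|\phi^2\bar u^{-2\lambda-2}\big(1+\frac{\epsilon\rho}{2\bar u}\big)^{-1}$. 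The two only partially cancel, leaving on the right
\[
\int\Big(\lambda-\frac{1}{1+\frac{\epsilon\rho}{2\bar u}}\Big)\frac{|\bar u_{yy}|\phi^2}{\bar u^{2\lambda+2}}\le\big(\lambda-1+C\epsilon\big)\int\frac{|\bar u_{yy}|\phi^2}{\bar u^{2\lambda+2}}.
\]
This is precisely the third term on the right of \eqref{F07}, and the last term in \eqref{idea114}. Near the wall it is more singular than $\int A\phi^2$ by a factor $\bar u^{-2\lambda}$. So it can neither be discarded nor bounded by Lemma~\ref{lem2}. Even for $\lambda=1$, the $O(\epsilon)$ remainder carries this singular weight.

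The repair is one extra Hardy step, and it is subcritical, so your sharp constant is unaffected. On $\xi\le 2\delta/a$ one has $|\bar u_{yy}|\le a^2$ and $\bar u^2\sim a\xi$. Applying Lemma~\ref{lem1} with $s=\frac{\lambda+1}{2}$ to $\phi\chi_{\delta/a}$ gives
\[
\int\frac{|\bar u_{yy}|\phi^2\chi_{\delta/a}^2}{\bar u^{2\lambda+2}}\lesssim\int\frac{\phi_\xi^2\chi_{\delta/a}^2}{\bar u^{2\lambda-2}}+C(\delta)\int A\phi^2\lesssim\sqrt{\delta}\int\frac{\phi_\xi^2}{\bar u^{2\lambda-1}}+C(\delta)\int A\phi^2 .
\]
The $\chi'$-contribution is supported where $A\gtrsim_\delta a^2$, and the region $\xi\ge\delta/a$ contributes at most $C(\delta)\int A\phi^2$. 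The paper obtains the same bound through two applications of the argument of Lemma~\ref{Lower base}, in \eqref{F12}--\eqref{F13}. With this term included, the dissipation coefficient becomes $1-\frac{2(2\lambda-1)}{2\lambda+1}-O(\sqrt\delta)-O(\epsilon)$. The rest of your proposal then coincides with the paper's proof: the critical cross term via Lemma~\ref{Lower base} with $\beta=\lambda+\frac32$ and then $\beta=\lambda+\frac12$, the $\epsilon$-terms via \eqref{rhoxi}, and integration in $X$ with Lemma~\ref{lem2}.

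Two minor corrections. First, $\phi\bar u^{-2\lambda}\lesssim\bar u^{2-2\lambda}$ is unbounded for $\lambda>1$. The boundary terms still vanish, because the quantities that actually appear are $u\phi\phi_\xi\bar u^{-2\lambda}\lesssim\bar u^{3-2\lambda}$ and $\phi^2\bar u^{-2\lambda}\lesssim\bar u^{4-2\lambda}$, with $\phi_\xi$ bounded. Second, Lemma~\ref{lem2} controls $\sup_X\|\phi\|_{L^2_\xi}^2$, not its $X$-integral. So the far-field pieces must be bounded through $\int A\phi^2$, as in your stated alternative.
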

\begin{proof}
Denote $k=2\lambda-1$, it is clear that $1\le k<2$. Multiplying \eqref{eqphi} by  $\phi/\bar{u}^{k+1} $,  one gets
	\[
	\frac12 \partial_X \left(\phi^2\right)\frac{1}{\bar{u}^{k+1}}
	-\frac{a}{2}\partial_\xi \left(\phi^2\right)\frac{1}{\bar{u}^{k+1}}
	-\left(\frac1{\bar{u}^k}+\epsilon\frac{\rho}{\bar{u}^{k+1}}\right)\frac12\partial_{\xi\xi}(\phi^2)
	+\frac{\bar{u}+\epsilon\rho}{\bar{u}^{k+1}}\phi_\xi^2
	+A\frac{\phi^2}{\bar{u}^{k+1}}=0.
	\]
Noting the definition of $\bar{u}$, one easily obtains 
	\begin{align}\label{F06}
	-a\bar{u}_\xi=\bar{u}_{yy}/\bar{u},
	\end{align}
then one has from integration by parts that
	\begin{equation}\label{F07}
		\begin{aligned}
			&	\frac12 \frac{d}{d X}\int\frac{\phi^2}{\bar{u}^{k+1}}
			+\int\frac{\bar{u}+\epsilon\rho}{\bar{u}^{k+1}}\phi_\xi^2\\
			&=\,k\int \phi\phi_\xi\frac{\bar{u}_y}{\bar{u}^{k+2}}
			-\epsilon\int \left(\frac{\rho}{\bar{u}^{k+1}}\right)_\xi \phi\phi_\xi 
			+\int\left(\frac1{1+\frac{\epsilon\rho}{2\bar{u}}}-\frac{k+1}{2}\right)\frac{\bar{u}_{yy}\phi^2}{\bar{u}^{k+3}}.
		\end{aligned}
	\end{equation}

For the 1st term  on RHS of \eqref{F07},
it holds that
	\begin{align}\label{F08}
		\left|\int \phi\phi_\xi \frac{\bar{u}_y}{\bar{u}^{k+2}}\right|
		\le \left\|\frac{\phi_\xi}{\bar{u}^{\frac{k}{2}}}\right\|_{L_\xi^2}
		\left\|\frac{\phi\bar{u}_y\chi_\frac{\delta}{a}}{\bar{u}^{\frac k2+2}}\right\|_{L_\xi^2}
		+\delta\left\|\frac{\phi_\xi}{\bar{u}^{\frac{k}{2}}}\right\|_{L_\xi^2}^2
		+C(\frac{1}{\delta})\int A\phi^2,
	\end{align}
	where we have used $\bar{u}_y^2\lesssim|\bar{u}_{yy}|$ and  $\bar{u}\sim_{\delta} 1$ for $\xi\geq \frac{\delta}{a}$ from Lemma \ref{baru}.
	
	Using Lemma \ref{Lower base}, one derives 
\begin{align}\label{F09}
	\left\|\frac{\phi\bar{u}_y\chi_{\frac{\delta}{a}}}{\bar{u}^{\frac k2+2}}\right\|_{L_\xi^2}
	\leq \frac{1}{C_2(\delta)^{2}}\frac{4}{k+2} \Bigg(
	\left\|  \frac{\bar{u}_y \phi\chi_{\frac{\delta}{a}}}{\bar{u}^{\frac{k}{2}+1}}\right\|_{L_{\xi}^2}
	+ \left\|  \frac{ \phi_\xi\chi_{\frac{\delta}{a}}}{\bar{u}^{\frac{k}{2}}}\right\|_{L_{\xi}^2}
	+\frac{C}{\delta} \left\| \frac{\bar{u}_y \phi}{\bar{u}^{\frac{k}{2}}} \right\|_{L_{\xi\geq \frac{\delta}{a}}^2}
	\Bigg).
\end{align}
We again invoke Lemma \ref{Lower base} to get
\begin{align}\label{F09.5}
	\left\|  \frac{\bar{u}_y \phi\chi_{\frac{\delta}{a}}}{\bar{u}^{\frac{k}{2}+1}}\right\|_{L_{\xi}^2}
	&\lesssim \left\|  \frac{\bar{u}_y \phi\chi_{\frac{\delta}{a}}}{\bar{u}^{\frac{k}{2}}}\right\|_{L_{\xi}^2}
	+\left\|  \frac{ \phi_\xi\chi_{\frac{\delta}{a}}}{\bar{u}^{\frac{k}{2}-1}}\right\|_{L_{\xi}^2}
	+\frac{C}{\delta}\left\| \frac{\bar{u}_y \phi}{\bar{u}^{\frac{k}{2}-1}} \right\|_{L_{\xi\geq \frac{\delta}{a}}^2} \nonumber\\
		&\lesssim  \left\|  \frac{\bar{u}_y \phi\chi_{\frac{\delta}{a}}}{\bar{u}^{\frac{k}{2}}}\right\|_{L_{\xi}^2}
		+\sqrt{\delta} \left\|  \frac{ \phi_\xi\chi_{\frac{\delta}{a}}}{\bar{u}^{\frac{k}{2}}}\right\|_{L_{\xi}^2}
		+\frac{C}{\delta}\left\| \frac{\bar{u}_y \phi}{\bar{u}^{\frac{k}{2}-1}} \right\|_{L_{\xi\geq \frac{\delta}{a}}^2},
\end{align}
where we have used the fact $C_2(\delta)\sim 1$ for $\delta\ll 1$ and $\bar{u}\sim \sqrt{a\xi}$ for $\xi\leq \frac{1}{a}$.

Noting $1\leq k<2$, then we have from \eqref{F08}-\eqref{F09.5} that
\begin{align}\label{F11}
			\int \left|\phi\phi_\xi \frac{\bar{u}_y}{\bar{u}^{k+2}}  \right|
		\leq&
		\frac{1}{C_2(\delta)^{2}}\frac{4}{k+2} \left\|\frac{\phi_\xi}{\bar{u}^{\frac{k}{2}}}\right\|_{L_\xi^2}\Bigg(
	\big(C\sqrt{\delta}+1\big) \left\|  \frac{ \phi_\xi\chi_{\frac{\delta}{a}}}{\bar{u}^{\frac{k}{2}}}\right\|_{L_{\xi}^2}
		+C(\frac{1}{\delta})\left\| \frac{\bar{u}_y \phi}{\bar{u}^{\frac{k}{2}}} \right\|_{L_{\xi}^2}	\Bigg) \nonumber\\
		&
		+\delta\left\|\frac{\phi_\xi}{\bar{u}^{\frac{k}{2}}}\right\|_{L_\xi^2}^2
		+C(\frac{1}{\delta})\int A\phi^2
	\nonumber	\\
		\leq&\left(\frac{4}{(k+2)C_2(\delta)^2}+C\sqrt{\delta} \right)\left\|\frac{\phi_\xi}{\bar{u}^{k/2}}\right\|_{L_\xi^2}^2
		+C(\frac{1}{\delta})\int A\phi^2.
\end{align}

\smallskip

For the 3rd term on RHS of \eqref{F07},
it is clear that
\begin{equation}\label{F12}
	\begin{aligned}
		\left\|\frac{\phi\,|\bar{u}_{yy}|^{\frac{1}{2}}}{\bar{u}^{\frac{k+1}{2}+1}}\right\|_{L_\xi^2}^2
		\lesssim
		\left\|\frac{\phi\,|\bar{u}_{yy}|^{\frac{1}{2}} \chi_{\frac{\delta}{a}} }{\bar{u}^{\frac{k+1}{2}+1}}\right\|_{L_\xi^2}^2
		+
		C(\frac{1}{\delta})\int A \phi^2.
	\end{aligned}
\end{equation}
Using similar arguments as in Lemma \ref{Lower base} twice, we obtain
\begin{align}\label{F13}
			\left\|\frac{\phi\,|\bar{u}_{yy}|^{\frac{1}{2}} \chi_{\frac{\delta}{a}} }{\bar{u}^{\frac{k+1}{2}+1}}\right\|_{L_\xi^2}^2
		&\lesssim
		\left\|  \frac{|\bar{u}_{yy}|^{\frac{1}{2}} \phi\,\chi_{\frac{\delta}{a}}}{\bar{u}^{\frac{k+1}{2}}}\right\|_{L_{\xi}^2}^2
		+\left\|  \frac{\phi_\xi\,\chi_{\frac{\delta}{a}}}{\bar{u}^{\frac{k+1}{2}-1}}\right\|_{L_{\xi}^2}^2
		+\frac{C}{\delta^2}\left\| \frac{|\bar{u}_{yy}|^{\frac{1}{2}} \phi}{\bar{u}^{\frac{k+1}{2}-1}} \right\|_{L_{\xi\ge \frac{\delta}{a}}^2}^2 \nonumber\\
		&\lesssim 	\left\|  \frac{|\bar{u}_{yy}|^{\frac{1}{2}} \phi\,\chi_{\frac{\delta}{a}}}{\bar{u}^{\frac{k+1}{2}-1}}\right\|_{L_{\xi}^2}^2+\left\|  \frac{\phi_\xi\,\chi_{\frac{\delta}{a}}}{\bar{u}^{\frac{k+1}{2}-1}}\right\|_{L_{\xi}^2}^2
		+\frac{C}{\delta^2}\left\| \frac{|\bar{u}_{yy}|^{\frac{1}{2}} \phi}{\bar{u}^{\frac{k+1}{2}-1}} \right\|_{L_{\xi\ge \frac{\delta}{a}}^2}^2  \nonumber\\
		&\lesssim \sqrt{\delta}\left\|\frac{\phi_\xi}{\bar{u}^{k/2}}\right\|_{L_\xi^2}^2
		+C(\frac{1}{\delta})\int A\phi^2.
\end{align}

\smallskip

For the second term on RHS of \eqref{F07},
it is clear that
	\begin{align*}
		\epsilon\int\left(\frac{\rho}{\bar{u}^{k+1}}\right)_\xi\phi\phi_\xi
		=\epsilon\int\frac{\rho_\xi}{\bar{u}^{k+1}}\phi\phi_\xi
		-\epsilon(k+1)\int\frac{\rho\bar{u}_y}{\bar{u}^{k+3}}\phi\phi_\xi.
	\end{align*}
	Using \eqref{rhoxi} and Lemma \ref{u0simbaru}, one has
	\begin{align}\label{F14}
		\left|\epsilon\int\left(\frac{\rho}{\bar{u}^{k+1}}\right)_\xi\phi\phi_\xi\right|
		\lesssim\epsilon\left\|\frac{\phi_\xi}{\bar{u}^{k/2}}\right\|_{L_\xi^2}^2
		+\epsilon \int\frac{\bar{u}_y|\phi\phi_\xi|}{\bar{u}^{k+2}},
	\end{align}
	which has been estimated in \eqref{F11}.
	
	Recall the crucial estimate \eqref{sharp constant}, i.e., that $	\lim_{\delta\to 0}C_2(\delta)=\sqrt{2}$.
	 Substituting \eqref{F08}-\eqref{F14} into \eqref{F07}, we obtain 
	\[
	\frac12\frac{d}{d X} \int\frac{\phi^2}{\bar{u}^{k+1}}+C(k)\int\frac{\phi_\xi^2}{\bar{u}^k}
	\lesssim \int A\phi^2,
	\]where 
		$C(k)=1-\frac{2k}{k+2}-o_{\delta,\epsilon}(1).$
		
	Integrating  over $[0,X]$, and noting $k=2\lambda-1$, one has
	\[
	\|\phi\|_{\mathbf{Y}_0}^2\lesssim\|\phi_0\|_{\mathbf{Y}_{0,0}}^2+\int_0^\infty\!\!\int A\phi^2,
	\]
	which, together with Lemma \ref{lem2}, concludes  \eqref{F05}.
	Therefore  the proof of Lemma \ref{lem3} is completed.
\end{proof}
	\begin{lem}\label{lem4}
		Let $\phi$ solve \eqref{eqphi}. There exists $\epsilon>0$  sufficiently small such that under the assumptions in Theorem \ref{Main}, it holds that
		\begin{align}\label{F15}
			\| \phi\|_{\mathbf{Y}_1}^{2}\equiv \left\|
			\frac{\phi_X}{\sqrt{u}}
			\right\|_{L^2_{X , \xi }}^2 + \ \sup_{X \ge 0}
			\left\|
			\phi_\xi
			\right\|_{L^2_{\xi }}^2 \lesssim a^2\| \phi_0 \|_{\mathbf{Y}_{0,0}}^{2}+
			\| \phi_0 \|_{\mathbf{Y}_{1,0}}^{2}.
		\end{align}
	\end{lem}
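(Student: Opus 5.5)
We multiply \eqref{eqphi} by $\phi_X/u$ and integrate in $\xi$, as sketched in \eqref{idea2}. Since $\phi_X(X,0)=0$ and $\phi_\xi$ decays at infinity, integration by parts gives $-\int \phi_{\xi\xi}\phi_X = \frac12\frac{d}{dX}\int\phi_\xi^2$. This yields
\begin{align*}
\int\frac{\phi_X^2}{u}+\frac12\frac{d}{dX}\int\phi_\xi^2
= a\int\frac{\phi_\xi\phi_X}{u}-\int\frac{A\phi\phi_X}{u}.
\end{align*}
The key point is that the nonlinear coefficient $u$ in front of $\phi_{\xi\xi}$ cancels exactly against the weight $1/u$. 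Hence no $\rho_\xi$ commutator terms appear, and the only nonlinearity sits in the weights. By Lemma \ref{u0simbaru}, $u\sim\bar u$ uniformly, so all weights may be exchanged between $u$ and $\bar u$ at the cost of absolute constants. The same lemma gives $A\sim |\bar u_{yy}|/\bar u^2$.

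We apply Cauchy--Schwarz to both terms on the right, absorbing $\frac14\int\phi_X^2/u$ twice. This gives
\begin{align*}
\frac12\int\frac{\phi_X^2}{u}+\frac12\frac{d}{dX}\int\phi_\xi^2\lesssim a^2\int\frac{\phi_\xi^2}{\bar u}+\int\frac{|\bar u_{yy}|^2}{\bar u^{5}}\phi^2 .
\end{align*}
Integrating over $[0,X]$, the claim \eqref{F15} reduces to bounding the two space-time integrals on the right by $\|\phi_0\|^2_{\mathbf{Y}_{0,0}}$, with the first carrying the factor $a^2$.

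For the first integral we use Lemma \ref{lem3} with $\lambda=1$, so that the exponent $\lambda-\frac12=\frac12$ matches. Then $\iint \phi_\xi^2/\bar u\lesssim \|\phi\|_{\mathbf{Y}_0}^2\lesssim\|\phi_0\|_{\mathbf{Y}_{0,0}}^2$, which produces the term $a^2\|\phi_0\|^2_{\mathbf{Y}_{0,0}}$.

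The main obstacle is the second integral, since its weight $|\bar u_{yy}|^2/\bar u^5$ is more singular near $\xi=0$ than anything in \eqref{F00}. We split it with the cutoff $\chi_{\delta/a}$.

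\emph{Far region} $\xi\ge\delta/a$. Here $\bar u\sim_\delta 1$ by Lemma \ref{baru}, and $|\bar u_{yy}|\lesssim a^2$. So the integrand is bounded by $|\bar u_{yy}|\,\phi^2$ times a constant of size $a^2C(\delta)$, hence by $a^2C(\delta)\,A\phi^2$. This is controlled by Lemma \ref{lem2}, giving again an $a^2$-weighted bound.

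\emph{Near region} $\xi\le\delta/a$. Here $|\bar u_{yy}|=a\bar u_y\sim a^2$. We write $\sqrt{|\bar u_{yy}|}\,\bar u_y$ in place of $|\bar u_{yy}|$ and apply the Hardy-type Lemma \ref{Lower base}, mimicking \eqref{F13}. Each application lowers the power of $\bar u$ in the denominator by one and produces a $\phi_\xi$ term. We iterate until we reach $\|\phi_\xi\chi/\bar u^{1/2}\|^2$, which is the $\lambda=1$ term of $\mathbf{Y}_0$, plus terms of the form $\int A\phi^2$ and far-region remainders.

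We must track the factors of $a$ produced by this iteration. Near the wall $\bar u_y\sim a$ and $\bar u\sim\sqrt{a\xi}$, and the Hardy inequality in $\xi$ costs no power of $a$ after rescaling by $a\xi$. The expected outcome is therefore a constant multiple of $a^2\|\phi_0\|^2_{\mathbf{Y}_{0,0}}$. If the powers of $a$ do not balance exactly, we would instead use Lemma \ref{lem3} with $\lambda$ close to $\frac32$: the heavier weight $\bar u^{-2\lambda}$ in $\sup_X\|\phi/u^\lambda\|$, combined with Hardy in $\xi$, absorbs the extra singularity. Since no smallness is needed at this stage, all constants only need to be finite and can depend on $\delta$.

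Finally, the sup in $X$ of $\|\phi_\xi\|_{L^2_\xi}^2$ is bounded by its initial value $\|\phi_0\|^2_{\mathbf{Y}_{1,0}}$ plus the right-hand side bounds above. Combining everything gives \eqref{F15}.
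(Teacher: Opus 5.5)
Your argument is correct and matches the paper's proof: multiply by $\phi_X/u$, use Cauchy--Schwarz on the two right-hand terms, control the $A\phi\phi_X/u$ term near the wall with the Hardy-type Lemma \ref{Lower base} and away from it by $\int A\phi^2$, and close with Lemmas \ref{lem2} and \ref{lem3}. The paper applies the Hardy bound inside the product $a\|\phi_X/\sqrt{u}\|_{L^2_\xi}\|\bar u_y\phi\chi_{1/a}/u^{5/2}\|_{L^2_\xi}$ before using Young, whereas you apply Cauchy--Schwarz first; that difference is only cosmetic. The powers of $a$ do balance, since $|\bar u_{yy}|=a\bar u_y$ and Lemma \ref{Lower base} has $a$-independent constants, giving a near-wall bound of $a^2\iint\phi_\xi^2/\bar u+C a^2\iint A\phi^2$. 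So your fallback with $\lambda$ near $\frac32$ is unnecessary, and as phrased it would fail anyway, because a $\sup_X$ bound cannot control an integral over $X\in[0,\infty)$.
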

	\begin{proof}
		Multiplying \eqref{eqphi} by $\dfrac{\phi_X}{u}$, one has
		\begin{align}\label{F16}
			\int \frac{\phi_{X}^2}{u}
			+\frac12 \frac{d}{d X}\int \phi_{\xi}^2
			=a\int \frac{\phi_{\xi}\phi_{X}}{u}
			-\int A \phi \frac{\phi_{X}}{u}.
		\end{align}
			It is clear that
		\begin{align}\label{F17}
			a\left|\int \frac{\phi_{\xi}\phi_{X}}{u}\right|
			\leq \frac12 \int \frac{\phi_X^2}{u}
			+\frac{a^2}{2}\int \frac{\phi_\xi^2}{u}.
		\end{align}
			For the last term on RHS of \eqref{F16}, using Lemmas \ref{u0simbaru} \& \ref{lem1},     one has
		\begin{align}\label{E103}
				\left|\int A \phi \frac{\phi_X}{u}\,\chi_{\frac{1}{a}} \right|
				&\lesssim
				a\left\| \frac{\phi_X}{\sqrt{u}} \right\|_{L^2_\xi}
				\left\| \frac{\bar{u}_{y}\phi\,\chi_{\frac{1}{a}}}{u^{\frac52}} \right\|_{L^2_\xi} \nonumber\\
				&\lesssim
				a\left\| \frac{\phi_X}{\sqrt{u}} \right\|_{L^2_\xi}
				\left(
				\left\| \frac{\bar{u}_{y} \phi\,\chi_{\frac{1}{a}}}{\bar{u}^{\frac32}} \right\|_{L^2_\xi}
				+\left\| \frac{\phi_\xi\,\chi_{\frac{1}{a}}}{\bar{u}^{\frac12}} \right\|_{L^2_\xi}
				+C\left\| \frac{\bar{u}_{y} \phi}{\bar{u}^{\frac12}} \right\|_{L^2_{\xi\ge 1/a}}
				\right)\nonumber \\
				&\lesssim a
				\left\| \frac{\phi_X}{\sqrt{u}} \right\|_{L^2_\xi}
				\left(
				\left\| \frac{\phi_\xi\,\chi_{\frac{1}{a}}}{\bar{u}^{\frac12}} \right\|_{L^2_\xi}
				+\left(\int A\phi^2\right)^{\frac12}
				\right)\nonumber \\
				&\lesssim
				\delta \left\| \frac{\phi_X}{\sqrt{u}} \right\|_{L^2_\xi}^2
				+C(\frac{1}{\delta})a^2\int \frac{\phi_\xi^2}{u}
				+C(\frac{1}{\delta})a^2\int A\phi^2,
		\end{align}
	which yields immediately that 
		\begin{align}\label{F18}
			\left|\int A \phi \frac{\phi_X}{u} \right|
			\lesssim
			\delta \int \frac{\phi_X^2}{u}
			+C(\frac{1}{\delta})a^2\int \frac{\phi_\xi^2}{u}
		+C(\frac{1}{\delta})a^2\int A\phi^2.
		\end{align}
			Taking $\delta>0$ suitably small, we have from \eqref{F16}-\eqref{F18} that \begin{align}\label{F19}
		\frac{d}{dX}\int \phi_{\xi}^2+\int \frac{\phi_X^2}{u}\lesssim a^2\int \frac{\phi_\xi^2}{u}
		+a^2\int A\phi^2.
	\end{align}
	Finally, integrating \eqref{F19} over $[0,X]$, and using Lemmas \ref{lem2} \&  \ref{lem3}, we conclude \eqref{F15}.
		Therefore the proof of Lemma \ref{lem4} is completed.
	\end{proof}
	\begin{proof}[\textbf{Proof of Theorem \ref{Main}}]
	Combining Lemmas \ref{lem2}--\ref{lem4}, we conclude \eqref{esimate in Y0}-\eqref{estimate in Y1}.
	\end{proof}

	\section{Decay Estimates}
		\subsection{Decay estimates in von Mises coordinates}\label{decay von mises}

\begin{lem}\label{RATE2}
	Under the hypotheses of Theorem \ref{Main2},   for all integers $0\le k\le m$, it holds that
\begin{equation}\label{rate3}
		\begin{aligned}
			&\frac{d}{d X}\int \phi^2(b_{m,a}+\xi)^k
			+\frac{ka}{2}\int\phi^2 (b_{m,a}+\xi)^{k-1}
			+\int  u\phi_{\xi}^2(b_{m,a}+\xi)^k
			+\int A \phi^2 (b_{m,a}+\xi)^k \le 0,
		\end{aligned}
	\end{equation} where the constant $b_{m,a}\gg 1$  depends only on $m$ and $a$.
\end{lem}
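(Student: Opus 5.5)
The plan is to repeat the basic energy estimate of Lemma \ref{lem2}, this time with the weight $w_k(\xi):=(b_{m,a}+\xi)^k$. The suction term $-a\phi_\xi$ then produces the good term $\frac{ka}{2}\int\phi^2 w_{k-1}$, and every commutator term generated by the weight will be absorbed into it by choosing $b_{m,a}$ large.

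\textbf{Step 1: the weighted identity.} Multiply \eqref{eqphi} by $\phi\,w_k$ and integrate in $\xi$. Since $\phi(X,0)=0$ and $\phi$ decays as $\xi\to\infty$, integration by parts gives
\begin{align*}
-a\int\phi_\xi\phi\,w_k=\frac{ka}{2}\int\phi^2 w_{k-1}.
\end{align*}
This is the source of decay. For the diffusion term, write $u=\bar u+\epsilon\rho$ and integrate by parts:
\begin{align*}
-\int u\phi_{\xi\xi}\phi\,w_k=\int u\phi_\xi^2 w_k+\int u_\xi\phi\phi_\xi w_k+k\int u\phi\phi_\xi w_{k-1}.
\end{align*}
The piece $\bar u_\xi\phi\phi_\xi w_k$ is integrated by parts once more, exactly as in \eqref{F01}. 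This yields $-\frac12\int\bar u_{\xi\xi}\phi^2w_k\ge0$ by \eqref{F03.5}, together with a commutator $-\frac k2\int\bar u_\xi\phi^2 w_{k-1}$. Because $\bar u_\xi=\bar u_y/\bar u$ is nonnegative, this commutator has a favourable sign after moving it to the left-hand side, or at worst is handled like the remaining commutators. The $\epsilon\rho_\xi$ piece is treated as in \eqref{F02}--\eqref{F03}. The Hardy-type Lemma \ref{Lower base} is applied on the region $\xi\le 1/a$, where $w_k\sim b^k$ is essentially constant, so that estimate goes through with the weight inserted. On the region $\xi\ge1/a$ we have $\bar u\sim1$ and $\bar u_y^2\lesssim|\bar u_{yy}|$, which gives the bound $\epsilon\big(\int u\phi_\xi^2w_k+\int A\phi^2w_k\big)$. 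For $\epsilon\ll1$ this is absorbed.

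\textbf{Step 2: the commutator term (main obstacle).} The key remaining term is $k\int u\phi\phi_\xi w_{k-1}$. Using $u\lesssim1$, $w_{k-1}=w_k/(b+\xi)$ and Cauchy--Schwarz,
\begin{align*}
k\Big|\int u\phi\phi_\xi w_{k-1}\Big|\le \frac12\int u\phi_\xi^2 w_k+\frac{k^2}{2}\int u\phi^2\frac{w_k}{(b+\xi)^2}\le\frac12\int u\phi_\xi^2w_k+\frac{Ck^2}{b}\int\phi^2 w_{k-1}.
\end{align*}
Choosing $b_{m,a}\ge 8Cm^2/a$ makes the last term at most $\frac{ka}{8}\int\phi^2w_{k-1}$ for all $k\le m$. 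This is where the dependence of $b_{m,a}$ on $m$ and $a$ enters. The other commutators of the form $\int\bar u_\xi\phi^2 w_{k-1}$ are handled similarly. On $\xi\ge1/a$, $\bar u_\xi$ is bounded by $a\,e^{-ay}\lesssim a$ times something small in $A$. On $\xi\le1/a$ they are controlled by $\int A\phi^2w_k/b$ plus the Hardy-type bound, which is again small once $b$ is large. The expected difficulty is keeping track of constants so that the absorbed fractions stay below $\frac12$ of the dissipation and below $\frac{ka}{2}$ of the gain.

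\textbf{Step 3: closing.} Collect the terms and multiply by $2$. The losses cost only fixed fractions of $\int u\phi_\xi^2w_k$, $\int A\phi^2 w_k$ and $ka\int\phi^2w_{k-1}$. Doubling the identity therefore leaves a full copy of each, after first redistributing the factors, i.e. starting with $\frac{d}{dX}\int\phi^2w_k+ka\int\phi^2w_{k-1}+2\int u\phi_\xi^2w_k+2\int A\phi^2w_k$ and losing at most half of each. This yields \eqref{rate3}. For $k=0$ the statement is exactly \eqref{F04}.
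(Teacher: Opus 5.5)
Your proposal is correct and follows essentially the same route as the paper: multiply by $\phi(b_{m,a}+\xi)^k$, let the suction term produce $\frac{ka}{2}\int\phi^2(b_{m,a}+\xi)^{k-1}$, treat the $\epsilon\rho_\xi$ term with Lemma \ref{Lower base} near the wall, and absorb the weight commutators by taking $b_{m,a}\gg m/a$. Your observation that the weight is comparable to $b_{m,a}^k$ on $\xi\le 2/a$ is a slightly simpler substitute for the paper's insertion of $(b_{m,a}+\xi)^{k/2}$ into the Hardy function.

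Two small corrections are needed.
\begin{itemize}
\item The term $\frac k2\int\bar u_\xi\phi^2(b_{m,a}+\xi)^{k-1}$ has the unfavourable sign, not the favourable one, so your fallback is actually required. It is handled most cleanly by the pointwise bound $\bar u_\xi=\bar u_y/\bar u\lesssim A/a$, which gives $\lesssim\frac{m}{ab_{m,a}}\int A\phi^2(b_{m,a}+\xi)^k$, exactly as in the paper.
\item In Step 2 you should give away, say, a quarter rather than half of $\int u\phi_\xi^2(b_{m,a}+\xi)^k$. Otherwise the additional $O(\epsilon)$ loss from the $\rho_\xi$ term leaves a coefficient $1-O(\epsilon)$ on that term after doubling, instead of the required $1$.
\end{itemize}
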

\begin{proof}
	Multiplying  \eqref{eqphi} by $\phi(b_{m,a}+\xi)^k$, one obtains
	\begin{align*}
		\frac{1}{2}\big(\phi^2(b_{m,a}+\xi)^k\big)_X
		-a\phi_{\xi}\phi(b_{m,a}+\xi)^k
		-u\phi_{\xi \xi}\phi(b_{m,a}+\xi)^k
		+A\phi^2(b_{m,a}+\xi)^k=0.
	\end{align*}
	Integrating  by parts, one obtains
\begin{align}\label{21.5}
			&\frac12\frac{d}{d X}\int \phi^2(b_{m,a}+\xi)^k
			+ \frac{ka}{2}\int (b_{m,a}+\xi)^{k-1}\phi^2
			+\int  u\phi_{\xi}^2 (b_{m,a}+\xi)^k
			-\frac12\int \bar{u}_{\xi\xi}\phi^2(b_{m,a}+\xi)^k\nonumber\\
			&+\int A \phi^2 (b_{m,a}+\xi)^k\nonumber\\
			=&\frac{k}{2}\int\bar{u}_{\xi}\phi^2 (b_{m,a}+\xi)^{k-1}
			-k\int u\phi\phi_\xi  (b_{m,a}+\xi)^{k-1}
			-\epsilon \int \phi \phi_\xi \rho_\xi (b_{m,a}+\xi)^k.
	\end{align}	Using $\bar{u}_\xi=\dfrac{\bar{u}_y}{\bar{u}}$, we have
	\begin{align}
		k\int\bar{u}_{\xi}\phi^2(b_{m,a}+\xi)^{k-1}\lesssim \frac{m}{ab_{m,a}}\int A \phi^2 (b_{m,a}+\xi)^{k}.
	\end{align}
	It is clear that
	\begin{align}
		\bigg|k	\int  u\phi\phi_\xi (b_{m,a}+\xi)^{k-1} \bigg| \leq \frac{ka}{4}\int (b_{m,a}+\xi)^{k-1} \phi^2+\frac{m}{ab_{m,a}}\int u \phi_{\xi}^2(b_{m,a}+\xi)^{k}.
	\end{align}
	For the last term on RHS of \eqref{21.5}, using Lemma \ref{u0simbaru} and \eqref{rhoxi}, one has
	\begin{equation}
		\begin{aligned}
			\epsilon \bigg|\int \phi \phi_{\xi }\rho_{\xi} (b_{m,a}+\xi)^k \bigg|
			\lesssim& \epsilon \int \frac{\bar{u}_y}{\bar{u}}|\phi \phi_\xi |(b_{m,a}+\xi)^k\chi_{\frac{1}{a}} 
			+\epsilon \int  u\phi_{\xi}^2 (b_{m,a}+\xi)^k \\
			&+\epsilon \int A \phi^2 (b_{m,a}+\xi)^k .
		\end{aligned}
	\end{equation}
	Using Lemma \ref{Lower base}, one derives 
\begin{align*}
			&\epsilon\int \frac{\bar{u}_y}{\bar{u}}|\phi \phi_\xi| (b_{m,a}+\xi)^k\chi_{\frac{1}{a}} \\
			\lesssim&
			\epsilon\Big\| \sqrt{u}\phi_\xi (b_{m,a}+\xi)^{\frac{k}{2}} \Big\|_{L_{\xi}^2}
			\Big\| \frac{ \bar{u}_y \phi (b_{m,a}+\xi)^{\frac{k}{2}} \chi_{\frac{1}{a}} }{\bar{u}^{\frac{3}{2}}} \Big\|_{L_{\xi}^2} \\
			\lesssim&
			\epsilon\Big\| \sqrt{u}\phi_\xi(b_{m,a}+\xi)^{\frac{k}{2}} \Big\|_{L_{\xi}^2}
			\Biggr\{         
			\Big\|  \frac{\bar{u}_y \phi (b_{m,a}+\xi)^{\frac{k}{2}}   \chi_{\frac{1}{a}} }{\bar{u}^{\frac{1}{2}}}\Big\|_{L_{\xi}^2}
			+\Big\|  \frac{ \phi_\xi(b_{m,a}+\xi)^{\frac{k}{2}}  \chi_{\frac{1}{a}}}{\bar{u}^{-\frac{1}{2}}}\Big\|_{L_{\xi}^2}        \\
			&+  m\Big\|  \frac{ \phi (b_{m,a}+\xi)^{\frac{k}{2}-1} \chi_{\frac{1}{a}}}{\bar{u}^{-\frac{1}{2}}}\Big\|_{L_{\xi}^2}
			+\Big\| \frac{\bar{u}_y \phi(b_{m,a}+\xi)^{\frac{k}{2}}}{\bar{u}^{-\frac{1}{2}}}   \Big\|_{L_{\xi\geq \frac{1}{a}}^2}
			\Biggl\} \\
			\lesssim&  \epsilon \Big\| \sqrt{u}\phi_\xi (b_{m,a}+\xi)^{\frac{k}{2}} \Big\|_{L_{\xi}^2}
			\Biggr\{
			\Big\| \sqrt{u}\phi_\xi (b_{m,a}+\xi)^{\frac{k}{2}} \Big\|_{L_{\xi}^2}
			+ \frac{m}{b_{m,a}}\Big\|  \phi(b_{m,a}+\xi)^{\frac{k}{2}}\chi_{\frac{1}{a}}\Big\|_{L_{\xi}^2} \\
			&+\left(\int A\phi^2 (b_{m,a}+\xi)^k \right)^{\frac{1}{2}} 
			\Biggl\} \\
			\lesssim& \epsilon \Big\| \sqrt{u}\phi_\xi (b_{m,a}+\xi)^{\frac{k}{2}} \Big\|_{L_{\xi}^2}
			\Biggl\{
			\Big\| \sqrt{u}\phi_\xi (b_{m,a}+\xi)^{\frac{k}{2}} \Big\|_{L_{\xi}^2}
			+ \frac{m}{a b_{m,a}}\left(\int A\phi^2(b_{m,a}+\xi)^{k} \right)^{\frac{1}{2}}\\
			&	+\left(\int A \phi^2 (b_{m,a}+\xi)^{k} \right)^{\frac{1}{2}}
			\Biggr\} \\
			\lesssim \epsilon	& \Big\| \sqrt{u}\phi_\xi (b_{m,a}+\xi)^{\frac{k}{2}} \Big\|_{L_{\xi}^2}^2
			+ \left(\epsilon + \epsilon \frac{m^2}{a^2b_{m,a}^2} \right)\int A\phi^2 (b_{m,a}+\xi)^k.
	\end{align*}

Combining all the above estimates, using \eqref{F03.5} and  $\epsilon>0$ sufficiently small, we obtain
	\begin{align}
		&\frac{1}{2}	\frac{d}{d X}\int \phi^2(b_{m,a}+\xi)^k
		+\frac{ka}{4}\int (b_{m,a}+\xi)^{k-1}\phi^2
		+\frac{3}{4}\int  u\phi_{\xi}^2(b_{m,a}+\xi)^k
		+\frac{3}{4} \int A \phi^2 (b_{m,a}+\xi)^k  \notag\\
		\lesssim&  \left( \frac{m}{ab_{m,a}}+\epsilon \frac{m^2}{a^2b_{m,a}^2}  \right) \left(\int A \phi^2 (b_{m,a}+\xi)^{k} +\int u \phi_{\xi}^2(b_{m,a}+\xi)^{k}\right).
		\end{align}
Finally, taking $b_{m,a}\gg \frac{m}{a}$, one concludes \eqref{rate3}.
Therefore the proof of Lemma \ref{RATE2} is completed.
\end{proof}

\begin{lem}\label{RATE3}
	Under the assumptions of Theorem \ref{Main2}, it holds that
	\begin{equation}\label{30}
		\begin{aligned}
				\sup_{X \geq0}
			\int X^{m}\phi^2
			+\int\int  X^{m} u\phi_{\xi}^2 
			+ \int \int  X^{m}A \phi^2 
			\leq & C_{m,a} \|(1+\xi)^{\frac{m}{2}}\phi_{0}\|_{\mathbf{Y}_{0,0}}^{2}.
		\end{aligned}
	\end{equation} 
\end{lem}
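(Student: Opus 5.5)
The plan is to trade spatial weights for powers of $X$ by an induction on $j$, using the differential inequality \eqref{rate3} of Lemma \ref{RATE2} as the only dynamical input. Write $b=b_{m,a}$ and
\[
E_k(X)=\int\phi^2(b+\xi)^k,\qquad D_k(X)=\int u\phi_\xi^2(b+\xi)^k+\int A\phi^2(b+\xi)^k .
\]
Then \eqref{rate3} reads
\[
E_k'+\tfrac{ka}{2}E_{k-1}+D_k\le 0,\qquad 0\le k\le m .
\]
The claim to prove by induction on $j=0,1,\dots,m$ is
\[
\sup_{X\ge0}X^{j}E_{m-j}(X)+\int_0^\infty X^{j}D_{m-j}\,dX+\int_0^\infty X^{j-1}E_{m-j}\,dX\le C_{j,m,a}\,E_m(0),
\]
where the last term is only included for $j\ge1$.

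The base case $j=0$ follows by integrating \eqref{rate3} with $k=m$ over $[0,X]$. This gives $\sup_X E_m+\int_0^\infty D_m\le E_m(0)$. It also gives $\tfrac{ma}{2}\int_0^\infty E_{m-1}\le E_m(0)$, which is the extra integral needed to start the next step.

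For the inductive step, multiply \eqref{rate3} with $k=m-j-1$ by $X^{j+1}$ and integrate in $X$. The only new term is the commutator $(j+1)\int X^{j}E_{m-j-1}$. Since $(b+\xi)^{m-j-1}\le(b+\xi)^{m-j}/b$, this is controlled by $\int X^{j}E_{m-j}$, which the induction hypothesis already bounds. Alternatively, one can use the dissipative term $\frac{(m-j)a}{2}\int X^{j}E_{m-j-1}$ coming from the previous level directly. The result is
\[
\sup_X X^{j+1}E_{m-j-1}+\int X^{j+1}D_{m-j-1}+\tfrac{(m-j-1)a}{2}\int X^{j+1}E_{m-j-2}\le C\,E_m(0).
\]
This closes the induction with constants of size roughly $m!/a^{m}$ times powers of $b^{-1}$. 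There is no boundary term at $X=0$, because $X^{j+1}$ vanishes there. Taking $j=m$ gives exactly
\[
\sup_X X^m\int\phi^2+\int\!\!\int X^m(u\phi_\xi^2+A\phi^2)\le C_{m,a}E_m(0),
\]
since $(b+\xi)^0=1$.

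The last step converts the initial weight. We have $E_m(0)=\int\phi_0^2(b+\xi)^m\lesssim_{m,a}\int(1+\xi)^m\phi_0^2$, and because $u_0\lesssim1$ this is bounded by $\|(1+\xi)^{m/2}\phi_0\|_{\mathbf Y_{0,0}}^2$. This yields \eqref{30}.

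The main point to watch is the bookkeeping of the commutator term at each level. It must be absorbed either by the dissipation $\frac{ka}{2}E_{k-1}$ produced by suction at the previous level or by the bound $\int X^{j}E_{m-j}$ from the induction hypothesis. Suction is what makes this work: it is the mechanism that turns one power of $\xi$ into one power of $X$. Beyond that, the argument needs only that \eqref{rate3} holds for every $k\le m$ with the same $b_{m,a}$, and Lemma \ref{RATE2} guarantees this.
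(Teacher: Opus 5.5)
Your argument is correct once you commit to the second (``alternatively'') branch, and it is then the paper's own proof. The paper integrates \eqref{rate3} with $k=m$, multiplies \eqref{rate3} by $X^{m-k}$ for $k=m-1,\dots,0$, and bounds the commutator $(m-k)\int\!\!\int X^{m-k-1}\phi^2(b_{m,a}+\xi)^k$ by the suction dissipation $\frac{(k+1)a}{2}\int\!\!\int X^{m-k-1}\phi^2(b_{m,a}+\xi)^{k}$ produced at the previous level. Your induction on $j=m-k$ is the same iteration. Your stated hypothesis is enough to run it, because the level-$j$ inequality gives $\frac{(m-j)a}{2}\int X^jE_{m-j-1}\le j\int X^{j-1}E_{m-j}$. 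The bookkeeping closes with $\sup_X X^mE_0+\int X^mD_0\le (2/a)^m E_m(0)$, and then $E_m(0)\le b_{m,a}^m\int(1+\xi)^m\phi_0^2$.

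The first justification you give is wrong, however, and should be deleted. The induction hypothesis at level $j$ controls $\sup_X X^jE_{m-j}$ and $\int_0^\infty X^{j-1}E_{m-j}\,dX$. It does not control $\int_0^\infty X^{j}E_{m-j}\,dX$.
\begin{itemize}
\item A supremum bound only gives $\int_0^L X^jE_{m-j}\le L\sup_X X^jE_{m-j}$, which is not uniform in $L$.
\item Under only $(1+\xi)^m$-weighted data this integral is in general infinite. In the pure-transport model $\phi=\phi_0(\xi+aX)$, finiteness requires roughly $\int\xi^{m+1}\phi_0^2<\infty$, one power more than assumed.
\end{itemize}
The underlying reason is that this branch never uses the term $\frac{ka}{2}E_{k-1}$, which is the only place where a power of $\xi$ is exchanged for a power of $X$. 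The factor $b_{m,a}^{-1}$ from $(b+\xi)^{m-j-1}\le(b+\xi)^{m-j}/b$ keeps the same power of $X$ and cannot replace it. For the same reason, your remark that the constants involve powers of $b^{-1}$ should also be dropped.
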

\begin{proof}
	Integrating \eqref{rate3} with $k=m$ over $[0,L]$, we obtain
	\begin{equation}\label{32}
		\begin{aligned}
			&	\int \phi^2(b_{m,a}+\xi)^m
			+\frac{ma}{2}\int_{0}^{L}\int\phi^2 (b_{m,a}+\xi)^{m-1}
			+ \int_{0}^{L} \int  u\phi_{\xi}^2(b_{m,a}+\xi)^m
			\\
			&+ \int_{0}^{L}   \int A \phi^2 (b_{m,a}+\xi)^m	\lesssim \|(b_{m,a}+\xi)^{\frac{m}{2}}\phi_{0}\|_{\mathbf{Y}_{0,0}}^{2}.
		\end{aligned}
	\end{equation}
	For any integer $0\le k \le m-1$, we multiply \eqref{rate3} by $X^{m-k}$ to get
	\begin{align}
			&\frac{d}{d X}\int  X^{m-k}\phi^2(b_{m,a}+\xi)^k
			+\frac{ka}{2}\int  X^{m-k}\phi^2(b_{m,a}+\xi)^{k-1}
			+\int  X^{m-k} u\phi_{\xi}^2(b_{m,a}+\xi)^k \nonumber \\
			&+\int  X^{m-k}A \phi^2 (b_{m,a}+\xi)^k \nonumber\\
			\leq& (m-k) \int X^{m-k-1}\phi^2(b_{m,a}+\xi)^k,
	\end{align}
	which yields immediately that
	\begin{align}\label{33}
			&\int  L^{m-k}\phi^2(b_{m,a}+\xi)^k
			+ \frac{ka}{2} \int_{0}^{L}\int  X^{m-k}\phi^2(b_{m,a}+\xi)^{k-1}
			+\int_{0}^{L}\int  X^{m-k} u\phi_{\xi}^2(b_{m,a}+\xi)^k \nonumber\\
			&+ \int_{0}^{L} \int  X^{m-k}A \phi^2 (b_{m,a}+\xi)^k  \nonumber\\
			\leq& \|(b_{m,a}+\xi)^{\frac{m}{2}}\phi_{0}\|_{\mathbf{Y}_{0,0}}^{2} +(m-k) \int_{0}^{L}\int X^{m-k-1}\phi^2(b_{m,a}+\xi)^k.
	\end{align}
	Thus we have for $1\leq k\leq m-1$ that
	\begin{equation}
		\begin{aligned}
		&	\int_{0}^{L}\int  X^{m-k}\phi^2(b_{m,a}+\xi)^{k-1}\\
			\leq& \frac{2m}{a}\bigg(\|(b_{m,a}+\xi)^{\frac{m}{2}}\phi_{0}\|_{\mathbf{Y}_{0,0}}^{2}
			+ \int_{0}^{L}\int X^{m-k-1}\phi^2(b_{m,a}+\xi)^k  \bigg),
		\end{aligned}
	\end{equation}
	which, together with \eqref{32} and iterating the argument for $k = m-1,m-2,\dots,1$, yields that
	\begin{equation}\label{34}
		\begin{aligned}
			\int_{0}^{L}\int  X^{m-k}\phi^2(b_{m,a}+\xi)^{k-1}
			\leq C_{m,a} \|(b_{m,a}+\xi)^{\frac{m}{2}}\phi_{0}\|_{\mathbf{Y}_{0,0}}^{2}.
		\end{aligned}
	\end{equation}
	Taking $k=0$ in \eqref{33} and using \eqref{34}, one has
	\begin{equation}
		\begin{aligned}
			L^{m}\int  \phi^2
			+\int_{0}^{L}\int  X^{m} u\phi_{\xi}^2
			+ \int_{0}^{L} \int  X^{m}A \phi^2
			\leq C_{m,a} \|(b_{m,a}+\xi)^{\frac{m}{2}}\phi_{0}\|_{\mathbf{Y}_{0,0}}^{2},\quad \mbox{for}\,\, L>0.
		\end{aligned}
	\end{equation}
	Therefore the proof of Lemma \ref{RATE3} is completed.
\end{proof}

\begin{lem}\label{RATE4}	Under the assumptions of Theorem \ref{Main2},  for all integers $0\le k\le m$, it holds
	\begin{align}\label{41}
			&\frac{1}{2}\frac{d}{d X}\int \frac{\phi^2}{\bar{u}^{2}}(b_{m,a}+\xi)^k + \frac{ka}{6}\int \frac{\phi^2}{\bar{u}^2}(b_{m,a}+\xi)^{k-1} +  \frac{1}{12}  \int \frac{\phi_{\xi}^2}{\bar{u}}(b_{m,a}+\xi)^k\nonumber\\
			\leq& C_{m,a}  \int A\phi^2(1+\xi)^{k}    .
	\end{align}
\end{lem}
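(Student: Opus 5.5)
We follow the proof of Lemma \ref{lem3} with $\lambda=1$ (i.e.\ $k_{\lambda}=2\lambda-1=1$), now carrying the spatial weight $(b_{m,a}+\xi)^k$ as in Lemma \ref{RATE2}. Concretely, we multiply \eqref{eqphi} by $\phi(b_{m,a}+\xi)^k/\bar{u}^2$ and integrate by parts, using \eqref{F06}.

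Compared with \eqref{F07}, this produces the same three terms, each now multiplied by $(b_{m,a}+\xi)^k$. It also produces new terms where a $\xi$-derivative falls on the weight:
\begin{itemize}
\item from the transport term $-a\phi_\xi$, the favorable contribution $\frac{ka}{2}\int \frac{\phi^2}{\bar{u}^2}(b_{m,a}+\xi)^{k-1}$;
\item from the transport term, a correction $a\int \phi^2(\bar{u}^{-2})_\xi (b+\xi)^k$, which is already absorbed in \eqref{F07};
\item from the diffusion term, $-k\int \frac{u}{\bar{u}^2}\phi\phi_\xi (b+\xi)^{k-1}$ and $k\int (u\bar{u}^{-2})_\xi\,\phi^2 (b+\xi)^{k-1}$-type terms.
\end{itemize}

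Since $u\sim\bar{u}\le 1$ by Lemma \ref{u0simbaru}, the first diffusion term is bounded by Cauchy--Schwarz:
\[
\frac{ka}{12}\int\frac{\phi^2}{\bar{u}^2}(b+\xi)^{k-1}+\frac{Ck}{a}\int\frac{\phi_\xi^2}{\bar{u}}\,\frac{(b+\xi)^{k-1}}{1}.
\]
The second piece is at most $\frac{Cm}{ab_{m,a}}\int\frac{\phi_\xi^2}{\bar{u}}(b+\xi)^k$, so it is absorbed once $b_{m,a}\gg m/a$. The terms involving $\bar{u}_\xi=\bar{u}_y/\bar{u}$ and $\epsilon\rho_\xi$ are treated as in Lemma \ref{RATE2}: one factor of $(b+\xi)^{-1}\le b^{-1}$ gives smallness, and the near-wall part goes through Lemma \ref{Lower base}. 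The weight is harmless there, because $\chi_{\delta/a}$ localizes to $\xi\le 2\delta/a$, where $(b+\xi)^k\sim b^k$. Its derivative inside Lemma \ref{Lower base} only adds $\frac{m}{b}\|\cdot\|$ terms, which are again absorbed.

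The core step is the analogue of \eqref{F08}--\eqref{F13} with $k_\lambda=1$. The cross term $\int\phi\phi_\xi\bar{u}_y(b+\xi)^k/\bar{u}^3$ is controlled by \eqref{F09}--\eqref{F11}. The sharp constant $\frac{4}{(k_\lambda+2)C_2(\delta)^2}\to\frac{2}{3}$ multiplies $k_\lambda=1$, leaving the coefficient $1-\frac23-o(1)$ on $\int\frac{\phi_\xi^2}{\bar{u}}(b+\xi)^k$. The $\bar{u}_{yy}$ term is handled via \eqref{F12}--\eqref{F13}; at $k_\lambda=1$ its prefactor $\frac{1}{1+\epsilon\rho/2\bar{u}}-1$ is $O(\epsilon)$, so it is harmless. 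The $\epsilon$-term follows \eqref{F14}.

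Away from the wall, i.e.\ for $\xi\ge\delta/a$, Lemma \ref{baru} gives $\bar{u}\sim_\delta 1$ and $\bar{u}_y^2\lesssim|\bar{u}_{yy}|$. All remainders there are therefore bounded by $C(\delta)\int A\phi^2(b+\xi)^k\le C_{m,a}\int A\phi^2(1+\xi)^k$, since $b_{m,a}$ depends only on $m$ and $a$. We then fix $\delta$ and $\epsilon$ small enough that the total coefficient of the dissipation is at least $\frac{1}{12}$. After the Cauchy--Schwarz splitting, the transport gain keeps at least $\frac{ka}{6}$, which gives \eqref{41}.

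The main obstacle is the near-wall Hardy absorption. The margin $1-\frac23$ must survive the extra contributions generated by the weight, namely the derivative of $(b+\xi)^k$ and of $\chi_{\delta/a}$ inside Lemma \ref{Lower base}. The plan is to order the choices as follows: first fix $\delta$ so that the Hardy constant is within $\frac1{12}$ of $\frac23$; then choose $b_{m,a}\gg m/(a\delta)$; and finally choose $\epsilon$ small.
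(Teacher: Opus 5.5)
Your proposal is correct and follows essentially the same route as the paper. You multiply \eqref{eqphi} by $\phi(b_{m,a}+\xi)^k/\bar{u}^2$, absorb the weight-derivative cross term by Cauchy--Schwarz using $b_{m,a}\gg m/a$, and control $\int\phi\phi_\xi(1/\bar{u})_\xi(b_{m,a}+\xi)^k$ by applying Lemma \ref{Lower base} twice with the sharp constant $\frac{4}{3C_2(\delta)^2}\to\frac23$; the remaining near-wall and $\epsilon$-terms are $O(\sqrt{\delta}+\epsilon)$ multiples of the dissipation, and the far-field terms go into $C_{m,a}\int A\phi^2(b_{m,a}+\xi)^k$. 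The only difference is immaterial bookkeeping: the paper keeps the $\epsilon\rho$ part of the diffusion as $\epsilon\int\rho\,\phi_{\xi\xi}\phi\,\bar{u}^{-2}(b_{m,a}+\xi)^k$ and integrates it by parts separately, so its weight-derivative term carries $1/\bar{u}$ instead of your $u/\bar{u}^2$.
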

\begin{proof}
	Multiplying \eqref{eqphi} by $\dfrac{(b_{m,a}+\xi)^k \phi}{\bar{u}^2}$, one obtains
	\begin{equation}\label{42}
		\begin{aligned}
			&\frac{1}{2}\frac{d}{d X}\int \frac{\phi^2}{\bar{u}^{2}}(b_{m,a}+\xi)^k
			+ \frac{ka}{2}\int \frac{\phi^2}{\bar{u}^2}(b_{m,a}+\xi)^{k-1}
			+\int \frac{\phi_{\xi}^2}{\bar{u}}(b_{m,a}+\xi)^k \\
			=& -\int  \left[  1- \frac{1}{1+\frac{\epsilon\rho}{2\bar{u}}}  \right]\frac{\bar{u}_{yy}\phi^2}{\bar{u}^{4}}(b_{m,a}+\xi)^k
			+\epsilon \int\rho\frac{\phi_{\xi\xi}\phi}{\bar{u}^{2}}(b_{m,a}+\xi)^k
			-\int\frac{k}{\bar{u}}\phi\phi_\xi (b_{m,a}+\xi)^{k-1} \\
			&-\int\phi_\xi \phi \left(\frac{1}{\bar{u}}   \right)_\xi (b_{m,a}+\xi)^k.
		\end{aligned}
	\end{equation}
		For the 3rd term on RHS of \eqref{42}, one has
	\begin{equation}\label{43}
		\begin{aligned}
			\bigg|\int\frac{k}{\bar{u}}\phi\phi_\xi (b_{m,a}+\xi)^{k-1}\bigg|\leq  \frac{ka}{4} \int \frac{\phi^2}{\bar{u}^2}(b_{m,a}+\xi)^{k-1}
			+ \frac{m}{ab_{m,a}}\int \phi_{\xi}^2(b_{m,a}+\xi)^{k} .
		\end{aligned}
	\end{equation}
	For the 4th term on RHS of \eqref{42}, using  Lemma \ref{Lower base}, one obtains
	\begin{align}\label{3.17-1}
			& \Big| \int\phi_\xi \phi \left(\frac{1}{\bar{u}}\right)_\xi (b_{m,a}+\xi)^k \Big|\nonumber \\
			\leq& \left\|\frac{\phi_\xi}{\bar{u}^{\frac{1}{2}}} (b_{m,a}+\xi)^{\frac{k}{2}} \right\|_{L_\xi^2}
			\left\|\frac{\phi\bar{u}_y\chi_\frac{\delta}{a}  (b_{m,a}+\xi)^{\frac{k}{2}} }{\bar{u}^{\frac{5}{2}}}\right\|_{L_\xi^2}
			+\delta\left\|\frac{\phi_\xi (b_{m,a}+\xi)^{\frac{k}{2}}}{\bar{u}^{\frac{1}{2}}}\right\|_{L_\xi^2}^2
				+C(\frac{1}{\delta})\int A\phi^2(b_{m,a}+\xi)^{k} \nonumber\\
			\leq& \frac{4}{3C_2(\delta)^2}  \left\|\frac{\phi_\xi}{\bar{u}^{\frac{1}{2}}} (b_{m,a}+\xi)^{\frac{k}{2}} \right\|_{L_\xi^2}
			\Biggr\{
			\Big\|  \frac{\bar{u}_y \phi (b_{m,a}+\xi)^{\frac{k}{2}}   \chi_{\frac{\delta}{a}} }{\bar{u}^{\frac{3}{2}}}\Big\|_{L_{\xi}^2}
			+\Big\|  \frac{ \phi_\xi(b_{m,a}+\xi)^{\frac{k}{2}}  \chi_{\frac{\delta}{a}}}{\bar{u}^{\frac{1}{2}}}\Big\|_{L_{\xi}^2} \nonumber\\
			&+ \frac{m}{2b_{m,a}} \Big\|  \frac{ \phi (b_{m,a}+\xi)^{\frac{k}{2}} \chi_{\frac{\delta}{a}}}{\bar{u}^{\frac{1}{2}}}\Big\|_{L_{\xi}^2}
			+ \frac{C}{\delta}\Big\| \frac{\bar{u}_y \phi(b_{m,a}+\xi)^{\frac{k}{2}}}{\bar{u}^{\frac{1}{2}}}   \Big\|_{L_{\xi\geq \frac{\delta}{a}}^2}
			\Biggl\}+\delta\left\|\frac{\phi_\xi (b_{m,a}+\xi)^{\frac{k}{2}}}{\bar{u}^{\frac{1}{2}}}\right\|_{L_\xi^2}^2 \nonumber\\
			&+C(\frac{1}{\delta})\int A\phi^2(b_{m,a}+\xi)^{k}.
	\end{align}
Applying Lemma \ref{Lower base} again, one gets
	\begin{equation}\label{3.17-2}
		\begin{aligned}
			&\Big\|  \frac{\bar{u}_y \phi (b_{m,a}+\xi)^{\frac{k}{2}}   \chi_{\frac{\delta}{a}} }{\bar{u}^{\frac{3}{2}}}\Big\|_{L_{\xi}^2}\\
			\lesssim& \Big\|  \frac{\bar{u}_y \phi (b_{m,a}+\xi)^{\frac{k}{2}}   \chi_{\frac{\delta}{a}} }{\bar{u}^{\frac{1}{2}}}\Big\|_{L_{\xi}^2}
			+\Big\|  \frac{ \phi_\xi(b_{m,a}+\xi)^{\frac{k}{2}}  \chi_{\frac{\delta}{a}}}{\bar{u}^{-\frac{1}{2}}}\Big\|_{L_{\xi}^2}
			+  \Big\|  \frac{ \phi (b_{m,a}+\xi)^{\frac{k}{2}-1} \chi_{\frac{\delta}{a}}}{\bar{u}^{-\frac{1}{2}}}\Big\|_{L_{\xi}^2} + \frac{C}{\delta}\Big\| \frac{\bar{u}_y \phi(b_{m,a}+\xi)^{\frac{k}{2}}}{\bar{u}^{-\frac{1}{2}}}   \Big\|_{L_{\xi\geq \frac{\delta}{a}}^2} \\
			\lesssim& \Big\|  \frac{\delta^{\frac{1}{2}} \phi_\xi(b_{m,a}+\xi)^{\frac{k}{2}}  \chi_{\frac{\delta}{a}}}{\bar{u}^{\frac{1}{2}}}\Big\|_{L_{\xi}^2}
			+ \frac{m}{b_{m,a}} \Big\|  \frac{ \phi (b_{m,a}+\xi)^{\frac{k}{2}} \chi_{\frac{\delta}{a}}}{\bar{u}^{\frac{1}{2}}}\Big\|_{L_{\xi}^2}
			+ \frac{1}{\delta}\Big\| \frac{\bar{u}_y \phi(b_{m,a}+\xi)^{\frac{k}{2}}}{\bar{u}^{\frac{1}{2}}}   \Big\|_{L_{\xi}^2},
		\end{aligned}
	\end{equation}
where we have used the fact $C_2(\delta)\sim 1$ for $\delta\ll 1$ and $\bar{u}\sim \sqrt{a\xi}$ for $\xi\leq \frac{1}{a}$. Substituting \eqref{3.17-2} into \eqref{3.17-1}, one obtains

	\begin{align}\label{weighted F11}
			&	\Big| \int\phi_\xi \phi \left(\frac{1}{\bar{u}}\right)_\xi (b_{m,a}+\xi)^k \Big|\nonumber\\
			\leq
			& \frac{4}{3C_2(\delta)^2}  \left\|\frac{\phi_\xi}{\bar{u}^{\frac{1}{2}}} (b_{m,a}+\xi)^{\frac{k}{2}} \right\|_{L_\xi^2}
			\Biggr\{
			\left(1+C\sqrt{\delta} \right)\Big\|  \frac{ \phi_\xi(b_{m,a}+\xi)^{\frac{k}{2}}  \chi_{\frac{\delta}{a}}}{\bar{u}^{\frac{1}{2}}}\Big\|_{L_{\xi}^2}+ C\frac{m}{b_{m,a}} \Big\|  \frac{ \phi (b_{m,a}+\xi)^{\frac{k}{2}} \chi_{\frac{\delta}{a}}}{\bar{u}^{\frac{1}{2}}}\Big\|_{L_{\xi}^2} \nonumber\\
			&
			+ \frac{C}{\delta}\Big\| \frac{\bar{u}_y \phi(b_{m,a}+\xi)^{\frac{k}{2}}}{\bar{u}^{\frac{1}{2}}}   \Big\|_{L_{\xi\geq \frac{\delta}{a}}^2}
			\Biggl\} +\delta\left\|\frac{\phi_\xi (b_{m,a}+\xi)^{\frac{k}{2}}}{\bar{u}^{\frac{1}{2}}}\right\|_{L_\xi^2}^2 
			+C(\frac{1}{\delta})\int A\phi^2(b_{m,a}+\xi)^{k} \nonumber\\
			\leq& \left[\frac{4}{3C_2(\delta)^2} \left(1+C\sqrt{\delta} \right) + \delta \right]
			\left\|\frac{\phi_\xi}{\bar{u}^{\frac{1}{2}}} (b_{m,a}+\xi)^{\frac{k}{2}} \right\|_{L_\xi^2}^2
			+C(\frac{1}{\delta})\frac{m^2}{b_{m,a}^2}\Big\|  \frac{ \phi (b_{m,a}+\xi)^{\frac{k}{2}} \chi_{\frac{\delta}{a}}}{\bar{u}^{\frac{1}{2}}}\Big\|_{L_{\xi}^2}^2 \nonumber\\
			&+C(\frac{1}{\delta})\int A\phi^2(b_{m,a}+\xi)^{k} \nonumber\\
			\leq& \left[\frac{4}{3C_2(\delta)^2} \left(1+C\sqrt{\delta} \right) + \delta \right]
			\left\|\frac{\phi_\xi}{\bar{u}^{\frac{1}{2}}} (b_{m,a}+\xi)^{\frac{k}{2}} \right\|_{L_\xi^2}^2 +C(\frac{1}{\delta})(1+\frac{m^2}{a^2b_{m,a}^2}) 
			\int A\phi^2(b_{m,a}+\xi)^{k} .
	\end{align}
	For the 1st term on RHS of \eqref{42}, it is clear that
	\begin{align}
			\bigg|&\int  \left[  1- \frac{1}{1+\frac{\epsilon\rho}{2\bar{u}}}  \right]\frac{\bar{u}_{yy}\phi^2}{\bar{u}^{4}}(b_{m,a}+\xi)^k\bigg|   \nonumber\\
			\lesssim& \left\|\frac{\phi|\bar{u}_{yy}|^{\frac{1}{2}}  (b_{m,a}+\xi)^{\frac{k}{2}} \chi_{\frac{\delta}{a}}}{\bar{u}^2}   \right\|_{L_\xi^2}^2
			+ C(\frac{1}{\delta})\int A\phi^2(b_{m,a}+\xi)^{k}  \nonumber \\
			\lesssim& \delta^{\frac{1}{2}}  \left\| \frac{ \phi_\xi  (b_{m,a}+\xi)^{\frac{k}{2}}\chi_{\frac{\delta}{a}}}{\bar{u}^{\frac{1}{2}}}\right\|_{L_{\xi}^2}^2
			+\big[C(\frac{1}{\delta})+ \frac{m}{ab_{m,a}}\big]^2 \int A\phi^2(b_{m,a}+\xi)^{k}
			,
	\end{align}
	where we have used similar arguments as in Lemma \ref{Lower base} to derive 
	\begin{align}\label{44}
				\left\|\frac{\phi|\bar{u}_{yy}|^{\frac{1}{2}}  (b_{m,a}+\xi)^{\frac{k}{2}} \chi_{\frac{\delta}{a}}}{\bar{u}^2}   \right\|_{L_\xi^2}
			\lesssim& \left\|  \frac{|\bar{u}_{yy}|^{\frac{1}{2}} \phi   (b_{m,a}+\xi)^{\frac{k}{2}}\chi_{\frac{\delta}{a}}}{\bar{u}}\right\|_{L_{\xi}^2}
			+\left\|  \phi_\xi  (b_{m,a}+\xi)^{\frac{k}{2}}\chi_{\frac{\delta}{a}}\right\|_{L_{\xi}^2} \nonumber\\
			&+ \frac{m}{b_{m,a}}\left\|  \phi (b_{m,a}+\xi)^{\frac{k}{2}}\chi_{\frac{\delta}{a}}\right\|_{L_{\xi}^2}
			+\frac{C}{\delta}\left\| |\bar{u}_{yy}|^{\frac{1}{2}} \phi(b_{m,a}+\xi)^{\frac{k}{2}} \right\|_{L_{\xi\ge \frac{\delta}{a}}^2}\nonumber \\
			\lesssim& \left\| \frac{\delta^{\frac{1}{4}}}{\bar{u}^{\frac{1}{2}}} \phi_\xi  (b_{m,a}+\xi)^{\frac{k}{2}}\chi_{\frac{\delta}{a}}\right\|_{L_{\xi}^2}
			+\frac{m}{ab_{m,a}} \left\|    |\bar{u}_{yy}|^{\frac{1}{2}}  \phi (b_{m,a}+\xi)^{\frac{k}{2}}\chi_{\frac{\delta}{a}}\right\|_{L_{\xi}^2}\nonumber\\
			&+ \frac{C}{\delta}   \left(\int A\phi^2(b_{m,a}+\xi)^{k}\right)^{\frac{1}{2}} \nonumber\\
			\lesssim& \delta^{\frac{1}{4}}  \left\| \frac{ \phi_\xi  (b_{m,a}+\xi)^{\frac{k}{2}}\chi_{\frac{\delta}{a}}}{\bar{u}^{\frac{1}{2}}}\right\|_{L_{\xi}^2}
			+\big(\frac{C}{\delta}+ \frac{m}{ab_{m,a}}\big) \left(\int A\phi^2(b_{m,a}+\xi)^{k}\right)^{\frac{1}{2}}.
	\end{align}
	For the second term on RHS of \eqref{42}, it is clear that
	\begin{align*}
		\epsilon \int\rho\frac{\phi_{\xi\xi}\phi}{\bar{u}^{2}}(b_{m,a}+\xi)^k
		=&-\epsilon \int \rho_\xi \frac{\phi_\xi\phi}{\bar{u}^2}(b_{m,a}+\xi)^k
		-\epsilon\int \rho \frac{\phi_{\xi}^2}{\bar{u}^2}(b_{m,a}+\xi)^k
		-k\epsilon\int\rho \frac{\phi_{\xi}\phi}{\bar{u}^2}(b_{m,a}+\xi)^{k-1} \\
		&-\epsilon \int \rho \phi_\xi \phi \left(\frac{1}{\bar{u}^2}\right)_\xi (b_{m,a}+\xi)^k.
	\end{align*}
which, together with \eqref{rhoxi} and Lemma \ref{u0simbaru}, yields that 
	\begin{align}\label{45}
			\bigg|&\epsilon \int\rho\frac{\phi_{\xi\xi}\phi}{\bar{u}^{2}}(b_{m,a}+\xi)^k\bigg|   \nonumber\\
			\lesssim &\epsilon\int \frac{\phi_{\xi}^2}{\bar{u}}(b_{m,a}+\xi)^k +\epsilon \int \bar{u}_y \frac{|\phi_\xi\phi|}{\bar{u}^3}(b_{m,a}+\xi)^k
			+k\epsilon \int \frac{|\phi_\xi\phi|}{\bar{u}}(b_{m,a}+\xi)^{k-1},
	\end{align}
where all terms on RHS of \eqref{45} can be estimated as in \eqref{43}-\eqref{weighted F11}.
	
Substituting \eqref{43}--\eqref{45} into \eqref{42} and noting $\lim_{\delta\to 0}C_2(\delta)=\sqrt{2}$, then for sufficiently small $\delta>0,\epsilon>0$ and $b_{m,a}\gg \frac{m}{a}$, one gets
\begin{align}
			&\frac{1}{2}\frac{d}{d X}\int \frac{\phi^2}{\bar{u}^{2}}(b_{m,a}+\xi)^k
			+ \frac{ka}{6}\int \frac{\phi^2}{\bar{u}^2}(b_{m,a}+\xi)^{k-1}
			+  \frac{1}{12}  \int \frac{\phi_{\xi}^2}{\bar{u}}(b_{m,a}+\xi)^k\nonumber \\
			\leq &C_{m,a} \int A\phi^2(b_{m,a}+\xi)^{k}.
\end{align}
Therefore the proof of Lemma \ref{RATE4} is completed.
\end{proof}

\begin{lem}\label{RATE5}
	Under the assumptions of Theorem \ref{Main2}, it holds
	\begin{equation}\label{50}
		\begin{aligned}
			\sup_{X \geq0}
			\int X^{m} \frac{\phi^2}{u^{2}}
			+ \int \int X^{m} \frac{\phi_{\xi}^2}{u}
			\leq   C_{m,a} \|(1+\xi)^{\frac{m}{2}}\phi_{0}\|_{\mathbf{Y}_{0,0}}^{2}.
		\end{aligned}
	\end{equation}
\end{lem}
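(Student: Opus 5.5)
The plan is to repeat the time-weighted iteration of Lemma \ref{RATE3}, now built on the weighted quotient inequality \eqref{41} of Lemma \ref{RATE4} in place of \eqref{rate3}. The right-hand side of \eqref{41} is $C_{m,a}\int A\phi^2(b_{m,a}+\xi)^k$, and Lemma \ref{RATE3}, or more precisely the intermediate bounds \eqref{32}--\eqref{34}, already controls this term with the needed powers of $X$. At the end, $u\sim\bar u$ from Lemma \ref{u0simbaru} replaces $\bar u$ by $u$ in the weights.

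\textbf{Step 1 ($k=m$).} Integrate \eqref{41} with $k=m$ over $[0,L]$. The right-hand side $C_{m,a}\int_0^L\int A\phi^2(b_{m,a}+\xi)^m$ is bounded by \eqref{32}. This gives
\begin{align*}
\int\frac{\phi^2}{\bar u^2}(b_{m,a}+\xi)^m+\frac{ma}{3}\int_0^L\int\frac{\phi^2}{\bar u^2}(b_{m,a}+\xi)^{m-1}+\frac16\int_0^L\int\frac{\phi_\xi^2}{\bar u}(b_{m,a}+\xi)^m\le C_{m,a}\|(1+\xi)^{\frac m2}\phi_0\|_{\mathbf{Y}_{0,0}}^2 ,
\end{align*}
where I use $\bar u\sim u_0$ at $X=0$ and $\lambda\ge1$, together with $u_0\lesssim 1$, so that the initial term is dominated by the $\mathbf{Y}_{0,0}$ norm with weight $(1+\xi)^{m}$; here $b_{m,a}$ depends only on $m,a$.

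\textbf{Step 2 (downward iteration).} For $0\le k\le m-1$, multiply \eqref{41} by $X^{m-k}$. The derivative of the weight produces the extra term
\begin{align*}
(m-k)\int X^{m-k-1}\frac{\phi^2}{\bar u^2}(b_{m,a}+\xi)^k .
\end{align*}
This is exactly the good term $\frac{(k+1)a}{6}\int X^{m-(k+1)}\frac{\phi^2}{\bar u^2}(b_{m,a}+\xi)^{k}$ produced at level $k+1$, whose space-time integral was bounded at the previous stage. For $k=m-1$ it comes directly from Step 1.

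The new right-hand side source, $C_{m,a}\int_0^L\int X^{m-k}A\phi^2(b_{m,a}+\xi)^k$, is controlled by \eqref{33}--\eqref{34} of Lemma \ref{RATE3}. Inductively, for $k=m-1,\dots,1$ this yields
\begin{align*}
\int_0^L\int X^{m-k}\frac{\phi^2}{\bar u^2}(b_{m,a}+\xi)^{k-1}\le C_{m,a}\|(1+\xi)^{\frac m2}\phi_0\|_{\mathbf{Y}_{0,0}}^2 .
\end{align*}
Taking $k=0$ then gives
\begin{align*}
L^m\int\frac{\phi^2}{\bar u^2}(L)+\int_0^L\int X^m\frac{\phi_\xi^2}{\bar u}\le C_{m,a}\|(1+\xi)^{\frac m2}\phi_0\|_{\mathbf{Y}_{0,0}}^2 .
\end{align*}

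\textbf{Step 3.} Take the supremum over $L$ and use $u\sim\bar u$ to obtain \eqref{50}.

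The main obstacle is bookkeeping. I need the $A$-weighted bounds from Lemma \ref{RATE3} at every intermediate power $X^{m-k}(b_{m,a}+\xi)^k$, and not only the final $k=0$ statement \eqref{30}. If the proof of Lemma \ref{RATE3} does not record these intermediate bounds explicitly, I would rerun it in parallel: add a large multiple of \eqref{rate3} to \eqref{41} at each level $k$, and iterate the combined inequality. Then the sources $\int A\phi^2(b_{m,a}+\xi)^k$ are absorbed by the good $A$-terms of \eqref{rate3}, and the cross terms from the time weight are handled exactly as above.
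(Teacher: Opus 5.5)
Your proposal is correct and follows the paper's proof essentially step for step. You integrate \eqref{41} at $k=m$ using \eqref{32}, multiply by $X^{m-k}$, and control the sources $\int_0^L\int X^{m-k}A\phi^2(b_{m,a}+\xi)^k$ through \eqref{33}--\eqref{34} (the paper's \eqref{53}); you then absorb the time-weight cross term by the good term from level $k+1$, iterate down to $k=0$, and replace $\bar u$ by $u$ via Lemma \ref{u0simbaru}. Your fallback of coupling \eqref{rate3} with \eqref{41} is not needed, since the paper extracts exactly those intermediate $A$-weighted bounds from \eqref{33}--\eqref{34}.
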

\begin{proof}
	Integrating \eqref{41} over $[0,L]$, we obtain
	\begin{align}\label{51.5}
			&\int \frac{\phi^2}{\bar{u}^{2}}(b_{m,a}+\xi)^m
			+ \int_{0}^{L}\int \frac{\phi^2}{\bar{u}^2}(b_{m,a}+\xi)^{m-1}
			+ \int_{0}^{L}\int \frac{\phi_{\xi}^2}{\bar{u}}(b_{m,a}+\xi)^m\nonumber\\
			\leq& C_{m,a} \|(b_{m,a}+\xi)^{\frac{m}{2}}\phi_{0}\|_{\mathbf{Y}_{0,0}}^{2}.
	\end{align}
	For any integer $0\le k \le m-1$, we multiply \eqref{41} by $X^{m-k}$ to derive
	\begin{align}\label{52}
			&\frac{1}{2}\frac{d}{d X}\int  X^{m-k}\frac{\phi^2}{\bar{u}^2}(b_{m,a}+\xi)^k
			+\frac{ka}{6}\int  X^{m-k}\frac{\phi^2}{\bar{u}^2}(b_{m,a}+\xi)^{k-1}
			+\frac{1}{12}\int  X^{m-k} \frac{\phi_{\xi}^2}{\bar{u}}(b_{m,a}+\xi)^k\nonumber \\
			\leq &C_{m,a}\int  X^{m-k}A \phi^2 (b_{m,a}+\xi)^k
			+ \frac{m-k}{2}\int X^{m-k-1}\frac{\phi^2}{\bar{u}^2}(b_{m,a}+\xi)^k.
	\end{align}
	Integrating \eqref{52} over $[0,L]$, one gets
	\begin{align}\label{52.5}
			&	\frac{1}{2}\int  L^{m-k}\frac{\phi^2}{\bar{u}^2}(b_{m,a}+\xi)^k
			+ \frac{ka}{6} \int_{0}^{L}\int  X^{m-k}\frac{\phi^2}{\bar{u}^2}(b_{m,a}+\xi)^{k-1}\nonumber\\
			&			+\frac{1}{12}\int_{0}^{L}\int  X^{m-k} \frac{\phi_{\xi}^2}{\bar{u}}(b_{m,a}+\xi)^k\nonumber \\
			\leq&  
			C_{m,a}\biggr\{\int_{0}^{L}\int X^{m-k-1}\frac{\phi^2}{\bar{u}^2}(b_{m,a}+\xi)^k
			+ \int_{0}^{L}\int  X^{m-k}A \phi^2 (b_{m,a}+\xi)^k \biggl\}  \nonumber \\
			&+\|(b_{m,a}+\xi)^{\frac{m}{2}}\phi_{0}\|_{\mathbf{Y}_{0,0}}^{2}.
	\end{align}
	From the estimates \eqref{33} and \eqref{34}, one can get
	\begin{equation}\label{53}
		\begin{aligned}
			\int_{0}^{L} \int  X^{m-k}A \phi^2 (b_{m,a}+\xi)^k
			\leq C_{m,a} \|  (b_{m,a}+\xi)^{\frac{m}{2}}\phi_{0}\|_{\mathbf{Y}_{0,0}}^{2}.
		\end{aligned}
	\end{equation}
	Thus we have
	\begin{equation}\label{55}
		\begin{aligned}
			\int_{0}^{L}\int X^{m-k} \frac{\phi^2}{\bar{u}^2}(b_{m,a}+\xi)^{k-1}
			\leq &C_{m,a} \biggr\{    \|(b_{m,a}+\xi)^{\frac{m}{2}}\phi_{0}\|_{\mathbf{Y}_{0,0}}^{2}
			+ \int_{0}^{L} \int X^{m-k-1} \frac{\phi^2}{\bar{u}^{2}}(b_{m,a}+\xi)^k \biggl\}
			,
		\end{aligned}
	\end{equation}which, together with \eqref{51.5}
	and 	iterating the argument for $k = m-1,m-2,\dots,1$, yields that
	\begin{equation}\label{56}
		\begin{aligned}
			\int_{0}^{L}\int X^{m-k} \frac{\phi^2}{\bar{u}^2}(b_{m,a}+\xi)^{k-1}\leq
			C_{m,a}\|(b_{m,a}+\xi)^{\frac{m}{2}}\phi_{0}\|_{\mathbf{Y}_{0,0}}^{2}.
		\end{aligned}
	\end{equation}
	Taking $k=0$ in \eqref{52.5}, then using \eqref{30}, \eqref{56} and Lemma \ref{u0simbaru}, one concludes
	\begin{equation*}
		\begin{aligned}
			\int     
			L^{m} \frac{\phi^2}{u^{2}}
			+ \int_{0}^{L} \int X^{m} \frac{\phi_{\xi}^2}{u}
			&\leq C_{m,a} \|(b_{m,a}+\xi)^{\frac{m}{2}}\phi_{0}\|_{\mathbf{Y}_{0,0}}^{2}.
		\end{aligned}
	\end{equation*}
	Therefore the proof of Lemma \ref{RATE5} is completed.
\end{proof}

\begin{lem}\label{RATE6}
	Under the assumptions of Theorem \ref{Main2}, it holds 
	\begin{align}\label{60}
		\sup_{X \geq0}
		\int X^m \phi_{\xi}^2 \,
		+  \int \int X^m \frac{\phi_{X}^2}{u}
		\leq C_{m,a}\|(1+\xi)^{\frac{m}{2}}\phi_{0}\|_{\mathbf{Y}_{0,0}}^{2}+\|\phi_{0}\|_{\mathbf{Y}_{1,0}}^2. 
	\end{align}
\end{lem}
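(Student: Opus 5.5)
We weight the $\phi_X/u$ energy identity of Lemma \ref{lem4} by $X^m$ and feed in the weighted bounds already obtained in Lemmas \ref{RATE3} and \ref{RATE5}. Multiplying \eqref{eqphi} by $X^m\phi_X/u$ and integrating in $\xi$ gives, exactly as in \eqref{F16},
\begin{align*}
\int X^m\frac{\phi_X^2}{u}+\frac12\frac{d}{dX}\int X^m\phi_\xi^2
=\frac{m}{2}\int X^{m-1}\phi_\xi^2+a\int X^m\frac{\phi_\xi\phi_X}{u}-\int X^m A\phi\frac{\phi_X}{u}.
\end{align*}
There is no boundary contribution, because $\phi_X(X,0)=0$ since $\phi(X,0)=0$. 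The new feature compared with \eqref{F16} is the commutator term $\frac m2\int X^{m-1}\phi_\xi^2$, which comes from differentiating the time weight.

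We handle the two source terms exactly as in \eqref{F17}--\eqref{F18}, with every integrand multiplied by $X^m$. This holds because the weight $X^m$ is independent of $\xi$, so the Hardy-type Lemma \ref{Lower base} and the splitting near $\xi\le 1/a$ go through unchanged. Absorbing $\frac12\int X^m\phi_X^2/u$ into the left-hand side, we obtain
\begin{align*}
\frac{d}{dX}\int X^m\phi_\xi^2+\int X^m\frac{\phi_X^2}{u}\lesssim m\int X^{m-1}\phi_\xi^2+a^2\int X^m\frac{\phi_\xi^2}{u}+a^2\int X^mA\phi^2 .
\end{align*}
We then integrate over $[0,L]$. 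The value at $X=0$ vanishes for $m\ge1$. For $m=0$ it is $\|\phi_0\|_{\mathbf{Y}_{1,0}}^2$, which is why that norm appears on the right of \eqref{60}.

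The last two terms are bounded by Lemma \ref{RATE5} and Lemma \ref{RATE3}, respectively, each by $C_{m,a}\|(1+\xi)^{m/2}\phi_0\|_{\mathbf{Y}_{0,0}}^2$. The main point is the commutator term $\int_0^L\!\int X^{m-1}\phi_\xi^2$, which carries no factor $1/u$ and no $u$ weight. Since $u\lesssim1$ by Lemma \ref{u0simbaru}, we have $\phi_\xi^2\lesssim\phi_\xi^2/u$. Hence this term is controlled by Lemma \ref{RATE5} applied with $m-1$ in place of $m$, giving the bound $C_{m-1,a}\|(1+\xi)^{(m-1)/2}\phi_0\|_{\mathbf{Y}_{0,0}}^2\le C_{m,a}\|(1+\xi)^{m/2}\phi_0\|_{\mathbf{Y}_{0,0}}^2$. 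If one prefers, Lemma \ref{RATE5} can be applied at the same $m$ after splitting $X^{m-1}\le 1+X^m$, and in that case the unweighted piece is handled by Lemma \ref{lem3} with $\lambda=1$.

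Finally, we take the supremum over $L$. This gives \eqref{60}, with $\|\phi_0\|_{\mathbf{Y}_{1,0}}^2$ entering only through the $m=0$ initial term.

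The step needing most care is checking that the estimate \eqref{E103} really survives with $X^m$ inside. In particular, the Hardy bound on the quotient $\bar u_y\phi\chi_{1/a}/u^{5/2}$ must produce only $X^m\phi_\xi^2/u$ and $X^mA\phi^2$, and not an unweighted-in-$u$ term of a type not yet controlled. Since the cutoff and the weights involve only $\xi$, this holds verbatim. Lemma \ref{RATE5} then provides precisely the $X^m\phi_\xi^2/u$ control needed, which explains why the $1/\bar u^2$-weighted decay estimate had to be proven first.
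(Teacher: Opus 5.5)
Your proposal is correct and follows the paper's proof: the same $X^m\phi_X/u$ multiplier, the same treatment of the two source terms via \eqref{F17}--\eqref{F18}, and closure by Lemmas \ref{RATE3} and \ref{RATE5}. The paper handles the commutator term as in your second option, bounding $X^{m-1}\phi_\xi^2\lesssim (1+X)^m\phi_\xi^2/u$ via $u\lesssim 1$. Also, \eqref{F18} is an inequality holding separately at each fixed $X$, so carrying the weight $X^m$ through it is just multiplication by a nonnegative scalar, and the Hardy step needs no re-checking.
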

\begin{proof}
	Multiplying \eqref{eqphi} by $\displaystyle \frac{1}{u}\phi_X X^m$, one obtains
	\begin{align}\label{61}
		\int X^m \frac{\phi_{X}^2}{u}
		+\frac{1}{2}\frac{d}{d X}\int X^m \phi_{\xi}^2
		=\frac{m}{2}\int X^{m-1} \phi_{\xi}^2
		+ a\int X^m \frac{\phi_{\xi}\phi_{X}}{u}
		-\int X^m A \phi \frac{\phi_{X}}{u}.
	\end{align}
	It is clear that
	\begin{align}\label{62}
		a\left| \int X^m \frac{\phi_{\xi}\phi_{X}}{u} \right|
		\leq \frac{1}{4} \int X^m \frac{\phi_{X}^2}{u}
		+a^2\int X^m \frac{\phi_{\xi}^2}{u}.
	\end{align}
	For the last term in \eqref{61}, similarly as \eqref{F18}, it holds that
	\begin{align}\label{63}
		\begin{aligned}
			\left| \int X^m A \phi \frac{\phi_X}{u} \right|
			\lesssim \delta \int X^m \frac{\phi_X^2}{u}
			+C(\frac{1}{\delta})a^2\int (1+X)^m \frac{\phi_\xi^2}{u}
			+C(\frac{1}{\delta})a^2\int X^m A\phi^2.
		\end{aligned}
	\end{align}
	Substituting \eqref{62}-\eqref{63} to \eqref{61}, one gets
	\begin{align}
		X^m\int \frac{\phi_{X}^2}{u}+\frac{d}{d X}\int X^m \phi_{\xi}^2\leq C_{m,a}\int (1+X)^m \frac{\phi_\xi^2}{u}
		+C_{m,a}\int X^m A\phi^2.
	\end{align}
	Then integrating the above inequality  over  $[0,L]$, and using   \eqref{30}, \eqref{50} and Lemma \ref{u0simbaru}, one concludes \eqref{60}.
	Therefore  the proof of Lemma \ref{RATE6} is completed.
\end{proof}

\begin{proof}[\textbf{Proof of Theorem \ref{Main2}}]
Using Lemma \ref{baru}, Lemma \ref{RATE3} and Lemma \ref{RATE6},
together with the Sobolev embedding, we deduce that
\begin{align}\label{decay1}
		\left\| \frac{u\big(X,y(X,\xi;u)\big) - \bar{u}\big(y(\xi;\bar{u})\big)}{\epsilon} \right\|_{L_\xi^\infty}
		\lesssim& \frac{|\phi(X,\xi)|}{\sqrt{a\xi}} + |\phi(X,\xi)| 
		\lesssim \frac{1}{\sqrt{a}} \| \phi_\xi\|_{L_\xi^2} + \|\phi\|_{L_\xi^2}^{\frac12} \| \phi_\xi\|_{L_\xi^2}^{\frac12}\nonumber \\
		\lesssim& C_{m,a}\left(\|(1+\xi)^{\frac{m}{2}}\phi_{0}\|_{\mathbf{Y}_{0,0}}+\|\phi_{0}\|_{\mathbf{Y}_{1,0}}\right)(1+X)^{-\frac{m}{2}}.
\end{align}
Therefore the proof of Theorem \ref{Main2} is completed.
\end{proof}

\subsection{Decay estimate in physical coordinates}\label{decay physical}

\begin{lem}\label{RATE10}
		Under the assumptions of Theorem \ref{Main3}, it holds that\begin{equation}\label{100}
			\begin{aligned}
					&\sup_{ X \geq 0}\Big\{(1+X)^{m}\int\frac{\phi_{X}^2}{u} \Big\}
				+\int\int(1+X)^{m} \phi_{X\xi}^2  
				\leq C_{a,m}  \left(\|(1+\xi)^{\frac{m}{2}}\phi_{0}\|_{\mathbf{Y}_{0,0}}^2+\|\phi_{0}\|_{\mathbf{Y}_{in}}^2\right).
			\end{aligned}
		\end{equation}
\end{lem}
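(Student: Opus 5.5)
The plan is to differentiate \eqref{eqphi} in $X$ and run a weighted energy estimate for $\psi:=\phi_X$, in the spirit of \cite{Lei}. Since $u=\bar{u}+\epsilon\rho$ and $A$ both depend on $X$ through $\rho$, the function $\psi$ satisfies
\begin{align*}
\psi_X-a\psi_\xi-u\psi_{\xi\xi}+A\psi=\epsilon\rho_X\phi_{\xi\xi}-A_X\phi ,\qquad \psi(X,0)=0.
\end{align*}
Here $\epsilon\rho_X=u_X$. We also have $\phi_{\xi\xi}=(\phi_X-a\phi_\xi+A\phi)/u$ from the equation, and $A_X=\frac{2\bar{u}_{yy}}{\bar{u}(u+\bar{u})^2}\,\epsilon\rho_X$. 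Since $\epsilon\phi=(u+\bar u)\epsilon\rho$ with $\bar u$ independent of $X$, we get $\epsilon\rho_X=\epsilon\psi/(u+\bar u)\sim\epsilon\psi/\bar{u}$. Hence the right-hand side equals $\epsilon\psi\,\phi_{\xi\xi}/(u+\bar u)$ plus a term of size $\epsilon A\,\psi\phi/\bar{u}^{2}$. Both terms are quadratic, with one factor $\epsilon\psi$ and one factor built from $\phi$.

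I would multiply the $\psi$-equation by $(1+X)^m\psi/u$. This weight matches the $\mathbf{Y}_{2,0}$ norm and the $\mathbf{Y}_1$ quantity $\|\phi_X/\sqrt u\|_{L^2_{X,\xi}}$. After integrating by parts, the left side contains $\frac12\frac{d}{dX}\int(1+X)^m\psi^2/u$ and $\int(1+X)^m\psi_\xi^2$. The errors to control are:
\begin{itemize}
\item the time-weight term $\frac m2\int(1+X)^{m-1}\psi^2/u$, which is integrable in $X$ by Lemma \ref{RATE6} with $m$ replaced by $m-1$, and in fact by Lemma \ref{RATE6} at level $m$ since $(1+X)^{m-1}\le(1+X)^m$;
\item the term $\int\psi^2\,\partial_X(1/u)$, of order $\epsilon\int|\psi|^3/u^{3}$;
\item the convection/diffusion commutators $a\int\psi\psi_\xi(1/u)$-type and $\int u_\xi\psi\psi_\xi/u$;
\item the $A\psi^2/u$ term.
\end{itemize}
The favorable sign structure $-a u_\xi$ (via \eqref{F06}, $\bar{u}_{\xi\xi}\le0$) handles most of the commutator. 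What remains I would bound by Cauchy--Schwarz: one part goes into $\delta\int\psi_\xi^2$, and the rest into $C\int\psi^2/u^{3}$-type terms near $\xi=0$. Those terms I would reduce to $\int\psi_\xi^2$ with a small constant plus $\int\psi^2/u$, using the Hardy inequality Lemma \ref{lem1} or Lemma \ref{Lower base} with $\psi(X,0)=0$ and $\bar u\sim\sqrt{a\xi}$. Away from the boundary, $u\sim_a1$ makes everything elementary. The linear pieces produce $C_{a}(1+X)^m\int\psi^2/u$, whose $X$-integral is bounded by Lemma \ref{RATE6}.

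The main obstacle is the nonlinear source $\epsilon\int(1+X)^m\psi^2\phi_{\xi\xi}/u^2$ together with the cubic term above. I would first substitute $\phi_{\xi\xi}=(\psi-a\phi_\xi+A\phi)/u$ to trade the second derivative for quantities already controlled. The resulting pieces would be bounded as follows.
\begin{itemize}
\item For $\epsilon\int|\psi|^3/u^3$, use the pointwise bound $|\psi|\lesssim \bar u\cdot(\ldots)$. This should follow from an analogue of Lemma \ref{u0simbaru} via the maximum principle, or from a Hardy/Gagliardo--Nirenberg bound $\|\psi/\sqrt{\xi}\|_{L^\infty}\lesssim\|\psi_\xi\|_{L^2}$. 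Combined with Theorem \ref{Main}, which gives smallness of $\|\phi_\xi\|_{L^2}$ and the initial smallness \eqref{pho initial} propagated through the estimate, this lets me absorb the term into $\int\psi_\xi^2$ and $\int\psi^2/u$ with a small factor $\epsilon\delta_{a,m}$.
\item The terms with $\phi_\xi$ and $A\phi$ are handled by Cauchy--Schwarz against $\sup_X\|\phi_\xi\|_{L^2}$ and $\|\phi/u^\lambda\|$ from Lemma \ref{lem3}.
\end{itemize}
This is exactly why the smallness $\delta_{a,m}$ in Theorem \ref{Main3} is needed. If the pointwise bound on $\psi$ is unavailable, I would instead run a continuity (bootstrap) argument on the quantity in \eqref{100}.

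Finally, I would integrate the resulting inequality over $[0,L]$. At $X=0$ the boundary term is $\|\phi_0\|_{\mathbf{Y}_{2,0}}^2\le\|\phi_0\|_{\mathbf{Y}_{in}}^2$, and the remaining time integrals are bounded by Lemmas \ref{RATE3}, \ref{RATE5} and \ref{RATE6}. This gives \eqref{100}.
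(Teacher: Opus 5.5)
Your proposal follows essentially the same route as the paper: differentiate \eqref{eqphi} in $X$, test with $(1+X)^m\phi_X/u$, control the near-wall terms by Hardy with a $\sqrt{\delta}$ gain, and bound the cubic terms via $\|\phi_X/\sqrt{\xi}\|_{L^\infty_\xi}\lesssim\|\phi_{X\xi}\|_{L^2_\xi}$ plus Hardy; the argument then closes with a bootstrap on $\sup_{X\le L}(1+X)^m\int\phi_X^2/u+\int_0^L\!\int(1+X)^m\phi_{X\xi}^2$, using \eqref{pho initial}, $\|\phi_0\|_{\mathbf{Y}_{2,0}}$ and Lemmas \ref{RATE5}--\ref{RATE6}. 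Minor corrections: $u_X=\epsilon\phi_X/(2u)$, not $\epsilon\phi_X/(u+\bar u)$ (same size); the small factor in $\epsilon\int|\phi_X|^3/u^3$ is $\|\phi_X\|_{L^2_\xi}$, which is available only through the bootstrap, not from Theorem \ref{Main}; and with this multiplier there is no diffusion commutator and $\bar u_{\xi\xi}\le 0$ plays no role, while the convection term $\tfrac a2\int\phi_X^2u_\xi/u^2$ is unfavorable on its own---its leading part is dominated by the good term $\int A\phi_X^2/u$ on the left, or can simply be bounded by Hardy as in the paper's Step 4.
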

\begin{proof}
Applying $\partial_X$ to \eqref{eqphi}, one has
	\begin{align}\label{3.38}
		\phi_{XX} - a\phi_{X\xi} - u\phi_{X\xi\xi} + A\phi_X - u_X \phi_{\xi\xi} + A_X \phi = 0.
	\end{align}
	Multiplying \eqref{3.38} by $\frac{1}{u}\phi_{X}(1+X)^m$ and integrating by parts, we obtain
	\begin{align}\label{101}
		&	\frac12 \frac{d}{d X}\int (1+X)^m\frac{\phi_{X}^2}{u}
			+ (1+X)^m\int  \phi_{X\xi}^2
			+(1+X)^m \int \frac{A}{u} \phi_X^2 \nonumber \\
			=&  (1+X)^m\int u_X  \frac{\phi_{\xi\xi} \phi_X}{u}
			-(1+X)^m\int A_X \frac{\phi\phi_{X}}{u}
			- (1+X)^m\frac{1}{2}\int \frac{u_X}{u^2}\phi_{X}^2  \nonumber\\
			&- \frac{a}{2}(1+X)^m\int \phi_{X}^2 \left(\frac{1}{u}\right)_\xi+ \frac{m}{2} (1+X)^{m-1}\int \frac{\phi_{X}^2}{u}    .
	\end{align}
In the following, we shall control the terms on RHS of \eqref{101}, the proof is divided into five steps.\\

 {\it Step 1.} For the second term on RHS of \eqref{101},
	we note
	\begin{align}\label{102}
		\epsilon \phi_{X}=2u u_X , \quad A_X= \frac{2\bar{u}_{yy}\bar{u}u_X}{\big[\bar{u}(\bar{u}+u)\big]^2}
		=\epsilon\cdot \frac{\bar{u}_{yy}\bar{u}}{\big[\bar{u}(\bar{u}+u)\big]^2} \cdot \frac{\phi_{X}}{u},
	\end{align}
which yields that
	\begin{equation*}
		\begin{aligned}
			\bigg|\int A_X \frac{\phi\phi_X}{u}\bigg|
			\lesssim  \epsilon\int \frac{|\bar{u}_{yy}| \phi_{X}^2 }{\bar{u}^3}
			= \epsilon\int \frac{|\bar{u}_{yy}| \phi_{X}^2 }{\bar{u}^3}\chi_{1/a}
			+  \epsilon\int \frac{|\bar{u}_{yy}| \phi_{X}^2 }{\bar{u}^3}  (1- \chi_{1/a}).
		\end{aligned}
	\end{equation*}
	By similar arguments as in the proof of Lemma \ref{Lower base}, and using Lemmas \ref{baru} \& \ref{u0simbaru}, we get
\begin{align}\label{102.5}
&(1+X)^{m}	\bigg|\int A_X \frac{\phi\phi_X}{u}\bigg|\nonumber\\
\lesssim & \epsilon(1+X)^{m}\left( \bigg\| \frac{|\bar{u}_{yy}|^{1/2}\phi_X }{\bar{u}^{1/2}}  \bigg\|_{L_{\xi}^2}^2
+  \| \sqrt{\bar{u}} \phi_{X\xi} \chi_{\delta/a}\|_{L_{\xi}^2}^2
+ \frac{1}{\delta} \|  \sqrt{\bar{u}} |\bar{u}_{yy}|^{1/2}\phi_{X}  \|_{L_\xi^2}^2 \right)\nonumber\\
&+  \frac{\epsilon a^2}{\delta}(1+X)^{m}\int \frac{\phi_{X}^2}{u}\nonumber\\
\lesssim & \epsilon\sqrt{\delta} (1+X)^{m}\int \phi_{X\xi}^2 + \frac{\epsilon a^2}{\delta} (1+X)^{m}\int \frac{\phi_{X}^2}{u}.
\end{align}

{\it Step 2}:	For the 1st term on RHS of \eqref{101},  and using \eqref{eqphi}, \eqref{102} and Lemma \ref{u0simbaru}, one has
	\begin{equation}\label{103}
		\begin{aligned}
			\bigg| \int u_X \frac{\phi_{X}\phi_{\xi\xi}}{u} \bigg|
			\lesssim \epsilon \int \frac{|\phi_{X}|^3}{u^3}
			+ \epsilon a\int \frac{\phi_{X}^2|\phi_{\xi}|}{u^3} + \epsilon \int A\frac{\phi_{X}^2}{u}.
		\end{aligned}
	\end{equation}
	
It is clear that 
	\begin{align}\label{500}
			\bigg|\int\frac{|\phi_{X}|^3}{u^3} (1-\chi_{1/a})  \bigg|
			&\lesssim \|\phi_{X}\|_{L_{\xi}^2}^2 \|\phi_{X}\|_{L_{\xi}^\infty}
			\lesssim \|\phi_{X}\|_{L_{\xi}^2}^{2}\|\phi_{X}\|_{L_{\xi}^2}^{1/2}\|\phi_{X\xi}\|_{L_{\xi}^2}^{1/2}\nonumber\\
			&\lesssim \|\phi_{X}\|_{L_{\xi}^2}^2 \|\phi_{X\xi}\|_{L_{\xi}^2}^2
			+ \|\phi_{X}\|_{L_{\xi}^2}^{8/3}.
	\end{align}
Using Lemma \ref{u0simbaru} and Hardy inequality, one obtains
	\begin{align}\label{103.52}
			\int \frac{|\phi_{X}|^3}{u^3}\chi_{1/a}
			&\lesssim \frac{1}{a^{3/2}} \int \frac{|\phi_{X}|^3}{\xi^{3/2}}
			\lesssim \frac{1}{a^{3/2}} \left(\int \frac{\phi_{X}^2}{\xi^2}\right)^{1/2} \left(\int \frac{\phi_{X}^4}{\xi}\right)^{1/2}\nonumber\\
			&\lesssim \frac{1}{a^{3/2}} \| \phi_{X\xi} \|_{L_{\xi}^2} \| \phi_{X} \|_{L_{\xi}^2} \bigg\| \frac{\phi_{X}}{\xi^{1/2}}\bigg\|_{L_{\xi}^{\infty}} \lesssim \frac{1}{a^{3/2}} \|\phi_{X}\|_{L_{\xi}^2} \|\phi_{X\xi}\|_{L_{\xi}^2}^2.
	\end{align}
Then it follows from  \eqref{500} and \eqref{103.52} that
	\begin{equation}\label{105}
		\begin{aligned}
			\epsilon \int \frac{|\phi_X|^3}{u^3}
			\lesssim \epsilon \left(\|\phi_{X}\|_{L_{\xi}^2}^2+ \frac{1}{a^{3/2}}\|\phi_{X}\|_{L_{\xi}^2} \right)\|\phi_{X\xi}\|_{L_{\xi}^2}^2
			+ \epsilon\|\phi_X\|_{L_{\xi}^2}^{8/3}.
		\end{aligned}
	\end{equation}
	
A direct estimate gives
	\begin{align}\label{106.1}
			a \int  \frac{\phi_{X}^2 |\phi_{\xi}|}{u^3}(1-\chi_{1/a})
			\lesssim&  a\|\phi_{X}\|_{L_{\xi}^{\infty}} \|\phi_{X}\|_{L_{\xi}^2}\|\phi_{\xi}\|_{L_{\xi}^2} 
			\lesssim a\|\phi_{X}\|_{L_{\xi}^2}^{3/2}\|\phi_{X\xi}\|_{L_{\xi}^2}^{1/2}\|\phi_{\xi}\|_{L_{\xi}^2}\nonumber\\
			\lesssim& a\|\phi_{\xi}\|_{L_{\xi}^2} \|\phi_X\|_{L_{\xi}^2}^2
			+a \|\phi_{\xi}\|_{L_{\xi}^2}\|\phi_{X\xi}\|_{L_{\xi}^2}^2.
	\end{align}
Using Lemma \ref{u0simbaru} and Hardy inequality, one obtains
	\begin{align}\label{106.2}
			a \int \frac{\phi_{X}^2 |\phi_{\xi}|}{u^3}\chi_{1/a}
			&\lesssim \frac{1}{\sqrt{a}} \int \frac{\phi_{X}^2|\phi_{\xi}|}{\xi^{3/2}}
			\lesssim \frac{1}{\sqrt{a}}\bigg\|\frac{\phi_{X}}{\xi^{1/2}}\bigg\|_{L_{\xi}^{\infty}}
			\bigg\| \frac{\phi_{X}}{\xi}\bigg\|_{L_{\xi}^2}\|\phi_{\xi}\|_{L_{\xi}^2}\nonumber\\
			&\lesssim \frac{1}{\sqrt{a}} \|\phi_\xi\|_{L_{\xi}^2}\|\phi_{X\xi}\|_{L_{\xi}^2}^2.
	\end{align}
Thus one has from  \eqref{106.1} and \eqref{106.2} that
	\begin{equation}\label{107}
		\begin{aligned}
			\epsilon a\int \frac{\phi_X^2|\phi_\xi|}{u^3}
			\lesssim \epsilon a\|\phi_{\xi}\|_{L_{\xi}^2} \|\phi_X\|_{L_{\xi}^2}^2
			+\epsilon \left(a+\frac{1}{\sqrt{a}}\right) \|\phi_{\xi}\|_{L_{\xi}^2}\|\phi_{X\xi}\|_{L_{\xi}^2}^2.
		\end{aligned}
	\end{equation}

Substituting \eqref{105} and \eqref{107} into \eqref{103}, we obtain
		\begin{align}\label{108}
		(1+X)^{m}	\bigg|\int u_X \frac{\phi_X\phi_{\xi\xi}}{u}\bigg| 
			\lesssim & \epsilon \|(1+X)^{\frac{m}{2}}\phi_{X\xi}\|_{L_{\xi}^2}^2 \biggl\{\|\phi_{X}\|_{L_{\xi}^2}^2+ \frac{1}{a^{3/2}}\|\phi_{X}\|_{L_{\xi}^2} 	+  \left(a+\frac{1}{\sqrt{a}}\right) \|\phi_{\xi}\|_{L_{\xi}^2}     \biggr\} \nonumber\\
			&+\epsilon \left(\|\phi_{X}\|_{L_{\xi}^2}^{\frac{2}{3}}+ a \|\phi_{\xi}\|_{L_{\xi}^2}\right)\|(1+X)^{\frac{m}{2}}\phi_{X}\|_{L_{\xi}^2}^2  +\epsilon\int (1+X)^m A\frac{\phi_{X}^2}{u}.
		\end{align}

{\it Step 3.} For the 3rd term on RHS of \eqref{101}, using \eqref{102} and \eqref{105}, one has
	\begin{align}\label{109}
			 (1+X)^{m}	\bigg|\int \frac{u_X}{u^2}\phi_{X}^2\bigg|
			 \lesssim & \epsilon \left(\|\phi_{X}\|_{L_{\xi}^2}^2+ \frac{1}{a^{3/2}}\|\phi_{X}\|_{L_{\xi}^2} \right)\|(1+X)^{\frac{m}{2}}\phi_{X\xi}\|_{L_{\xi}^2}^2\nonumber\\
			&+ \epsilon \|\phi_{X}\|_{L^2_{\xi}}^{2/3}\cdot \|(1+X)^{\frac{m}{2}}\phi_X\|_{L_{\xi}^2}^{2}.
	\end{align}

{\it Step 4.} For the 4th term on RHS of \eqref{101}, noting $u_\xi=\bar{u}_\xi+\epsilon \rho_\xi$, then using \eqref{rhoxi},   \eqref{102.5}, \eqref{107} and Lemma \ref{u0simbaru}, one obtains
\begin{align}\label{110}
&\frac{a}{2}(1+X)^{m}\bigg|\int \phi_{X}^2 \left(\frac{1}{u}\right)_\xi \bigg|
\lesssim (1+X)^{m} \int \frac{|\bar{u}_{yy}|\phi_{X}^2}{u^3} + \epsilon a(1+X)^{m}\int \frac{\phi_{X}^2|\phi_{\xi}|}{u^3}\nonumber\\
\lesssim&	\epsilon  a\|\phi_{\xi}\|_{L_{\xi}^2} \|(1+X)^{\frac{m}{2}}\phi_X\|_{L_{\xi}^2}^2
+ \epsilon \left(a+\frac{1}{\sqrt{a}}\right) \|\phi_{\xi}\|_{L_{\xi}^2}\|(1+X)^{\frac{m}{2}}\phi_{X\xi}\|_{L_{\xi}^2}^2\nonumber\\
&+ \sqrt{\delta}(1+X)^{m} \int \phi_{X\xi}^2 +   \frac{ a^2}{\delta} (1+X)^{m} \int \frac{\phi_{X}^2}{u}.
\end{align}

{\it Step 5.}  Substituting \eqref{102.5}, and \eqref{108}-\eqref{110} into \eqref{101} and taking  $\epsilon>0$, $\delta>0$ sufficiently small, one obtains
	\begin{align}\label{111}
			&\frac{d}{d X}\int (1+X)^{m} \frac{\phi_{X}^2}{u}
			+(1+X)^{m} \int  \phi_{X\xi}^2   \nonumber\\
			\lesssim &\|(1+X)^{\frac{m}{2}}\phi_{X\xi}\|_{L_{\xi}^2}^2 \biggl\{ \|\phi_{X}\|_{L_{\xi}^2}^2+ \frac{1}{a^{3/2}}\|\phi_{X}\|_{L_{\xi}^2} +  \left(a+\frac{1}{\sqrt{a}}\right) \|\phi_{\xi}\|_{L_{\xi}^2}    \biggr\} \nonumber\\
			&+ \left( \|\phi_{X}\|_{L_\xi^2}^{2/3}+a\|\phi_{\xi}\|_{L_{\xi}^2}    \right)\|(1+X)^{\frac{m}{2}}\phi_{X}\|_{L_{\xi}^2}^2 + (1+a^2)  \int (1+X)^{m}\frac{\phi_{X}^2}{u}.
	\end{align}
	Define the continuous function on $[0,\infty)$ by
\begin{align*}
	E(L)=\sup_{0\leq X \leq L} \Big\{ (1+X)^{m}\int \frac{\phi_{X}^2}{u}d\xi \Big\}
	+\int_{0}^{L}\int  (1+X)^{m}\phi_{X\xi}^2d\xi dX.
\end{align*}
Integrating \eqref{111}  over $[0,L]$ and  using Lemma \ref{RATE5} and Lemma \ref{RATE6}, one obtains
	\begin{align*}
		E(L)\lesssim C_{a,m} \biggl\{  & E(L)^2+E(L)^{\frac{3}{2}}
		+\big[ \|\phi_{0}\|_{\mathbf{Y}_{0,0}} +\|\phi_{0}\|_{\mathbf{Y}_{1,0}}\big] E(L) + \Big[\|(1+\xi)^{\frac{m}{2}}\phi_{0}\|_{\mathbf{Y}_{0,0}}^2 +\|\phi_{0}\|_{\mathbf{Y}_{1,0}}^2 \Big] E(L)^{\frac{1}{3}}\nonumber\\
		&+\|(1+\xi)^{\frac{m}{2}}\phi_{0}\|_{\mathbf{Y}_{0,0}}^3+\|\phi_{0}\|_{\mathbf{Y}_{1,0}}^3+\|(1+\xi)^{\frac{m}{2}}\phi_{0}\|_{\mathbf{Y}_{0,0}}^2+\|\phi_{0}\|_{\mathbf{Y}_{1,0}}^2
		+\|\phi_{0}\|_{\mathbf{Y}_{2,0}}^2  \biggr\},
	\end{align*}
which, together with  \eqref{pho initial} and the standard bootstrap argument, yields that
	\begin{align*}
	E(L)\leq C_{a,m}  \left(\|(1+\xi)^{\frac{m}{2}}\phi_{0}\|_{\mathbf{Y}_{0,0}}^2+ \|\phi_{0}\|^2_{\mathbf{Y}_{1,0}} +\|\phi_{0}\|_{\mathbf{Y}_{2,0}}^2\right) .
	\end{align*}
	Therefore the proof of Lemma \ref{RATE10} is completed.
\end{proof}

\begin{lem}\label{RATE11}
	Under the assumptions of Theorem \ref{Main3}, we have	
	\begin{equation}\label{110-1}
		\begin{aligned}
			\int \phi_{\xi}^4
			\leq C_{m,a}\left( \|(1+\xi)^{\frac{m}{2}}\phi_{0}\|_{\mathbf{Y}_{0,0}}^4+ \|\phi_{0}\|_{\mathbf{Y}_{in}}^4\right)(1+X)^{-2m}.
		\end{aligned}
	\end{equation}	
\end{lem}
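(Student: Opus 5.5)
We need a bound on $\int\phi_\xi^4$ decaying like $(1+X)^{-2m}$. The natural route is to use the equation \eqref{eqphi} to express $\phi_{\xi\xi}$ in terms of quantities whose decay is already known:
\[
u\phi_{\xi\xi}=\phi_X-a\phi_\xi+A\phi .
\]
We then combine this with the one-dimensional Gagliardo--Nirenberg inequality $\|\phi_\xi\|_{L^4_\xi}^4\lesssim \|\phi_\xi\|_{L^2_\xi}^3\|\phi_{\xi\xi}\|_{L^2_\xi}$. Since $u$ degenerates at $\xi=0$, $\phi_{\xi\xi}$ itself need not be in $L^2$. So we split the integral with the cutoff $\chi_{1/a}$ and treat the region away from the wall and the region near the wall separately.

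\emph{Region away from the wall.} On $\{\xi\ge 1/a\}$ we have $u\sim 1$ by Lemma \ref{baru} and Lemma \ref{u0simbaru}, and $A\lesssim a^2$ there. Hence
\[
\|(1-\chi_{1/a})\phi_{\xi\xi}\|_{L^2_\xi}\lesssim_a \Big\|\frac{\phi_X}{\sqrt u}\Big\|_{L^2_\xi}+\|\phi_\xi\|_{L^2_\xi}+\|\phi\|_{L^2_\xi}.
\]
Apply Gagliardo--Nirenberg to $(1-\chi_{1/a})^{1/2}\phi_\xi$, or equivalently interpolate on the half-line $[1/a,\infty)$ with a boundary term controlled by $\|\phi_\xi\|_{L^2}$ and $\|\phi_{\xi\xi}\|_{L^2}$. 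This gives
\[
\int\phi_\xi^4(1-\chi_{1/a})\lesssim_a \|\phi_\xi\|_{L^2}^3\big(\|\phi_\xi\|_{L^2}+\|\phi_{\xi\xi}\|_{L^2(\xi\ge1/a)}\big).
\]
Each factor is $O((1+X)^{-m/2})$:
\begin{itemize}
\item $\sup_X (1+X)^m\|\phi_\xi\|_{L^2}^2$ is bounded by Lemma \ref{RATE6}, combined with Theorem \ref{Main} for small $X$;
\item $(1+X)^m\|\phi_X/\sqrt u\|_{L^2}^2$ is bounded by Lemma \ref{RATE10};
\item $(1+X)^m\|\phi\|^2_{L^2}$ is bounded by Lemma \ref{RATE3}.
\end{itemize}
Multiplying out, the product decays like $(1+X)^{-2m}$, with the constant being the fourth power of the initial norms.

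\emph{Region near the wall.} This is the main obstacle. On $\xi\le 2/a$ the weight $u\sim\sqrt{a\xi}$ degenerates, and we only control $\sqrt u\,\phi_{\xi\xi}$-type quantities. The plan is to work with the weighted identity $\sqrt u\,\phi_{\xi\xi}=(\phi_X-a\phi_\xi+A\phi)/\sqrt u$ and estimate each term in $L^2$:
\begin{itemize}
\item $\phi_X/\sqrt u$ is controlled by Lemma \ref{RATE10};
\item $\phi_\xi/\sqrt u$ is not available pointwise in $X$ from earlier lemmas, so we avoid it by writing instead
\[
\int\phi_\xi^4\chi_{1/a}=-\int \phi\,\partial_\xi\big(\phi_\xi^3\chi_{1/a}\big)=-3\int\phi\,\phi_\xi^2\phi_{\xi\xi}\chi_{1/a}+\dots
\]
and substituting $\phi_{\xi\xi}=(\phi_X-a\phi_\xi+A\phi)/u$.
\end{itemize}
After substitution the terms are handled as follows.
\begin{itemize}
\item The term $a\int \phi\phi_\xi^3\chi/u$ satisfies $|\phi|/u\lesssim \bar u\lesssim 1$ by Lemma \ref{u0simbaru}. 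More usefully, using $|\phi|/u\le \|\phi_\xi\|_{L^2}\,\xi^{1/2}/\sqrt{a\xi}$, it is bounded by $C a^{1/2}\|\phi_\xi\|_{L^2}\int\phi_\xi^3\chi$, which we interpolate against $\int\phi_\xi^4$ and $\|\phi_\xi\|_{L^2}^2$ and absorb.
\item The term $\int \phi A\phi\phi_\xi^2/u$ satisfies $|A\phi^2/u|\lesssim |\phi|$ near the wall, since $A\sim \bar u^{-2}$ and $|\phi|\lesssim\bar u^2$. It is therefore bounded by $\|\phi\|_{L^\infty}\|\phi_\xi\|_{L^2}^2$.
\item The term $\int\phi\phi_\xi^2\phi_X/u$ is bounded by
\[
\|\phi/\sqrt u\|_{L^\infty}\,\|\phi_\xi\|_{L^4}^2\,\|\phi_X/\sqrt u\|_{L^2},
\]
where $\|\phi/\sqrt u\|_{L^\infty}\lesssim_a \|\phi_\xi\|_{L^2}$ by the Hardy-type bound $|\phi|\le\xi^{1/2}\|\phi_\xi\|_{L^2}$. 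Young's inequality then lets us absorb $\|\phi_\xi\|_{L^4}^4$ into the left-hand side.
\item The cutoff derivative terms are handled as in the far-field region.
\end{itemize}
Collecting everything, $\int\phi_\xi^4$ is bounded by products of four factors, each of size $O((1+X)^{-m/2})$, which yields \eqref{110-1}. The delicate step is ensuring that every near-wall term carries exactly four decaying factors with admissible weights. If the integration-by-parts route fails, I would instead apply Lemma \ref{Lower base} to transfer the weights.
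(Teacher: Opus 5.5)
\textbf{Gap: the $A$-term near the wall.} After you substitute $u\phi_{\xi\xi}=\phi_X-a\phi_\xi+A\phi$, this term is $-3\int \frac{A}{u}\phi^2\phi_\xi^2\chi_{1/a}$. You estimate it with the non-decaying pointwise bound $|\phi|\lesssim\bar u^2$ of Lemma \ref{u0simbaru}. That spends one of the four factors of $\phi$ on an $O(1)$ bound, so the result is only cubic. Your bound $\|\phi\|_{L^\infty_\xi}\|\phi_\xi\|_{L^2_\xi}^2$ decays like $(1+X)^{-3m/2}$, which is not enough for \eqref{110-1}. This contradicts your closing claim that every term carries four decaying factors.

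The pointwise claim is also off. Near the wall $A\sim a^2\bar u^{-2}$, so $A\phi^2/u\lesssim a^2|\phi|/u$, not $|\phi|$. Combining this with $|\phi|/u\lesssim a^{-1/2}\|\phi_\xi\|_{L^2_\xi}$ still gives only the cubic quantity $a^{3/2}\|\phi_\xi\|_{L^2_\xi}^3$.

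The missing observation is that $A=-2\bar u_{yy}/(\bar u(u+\bar u))>0$, because $\bar u_{yy}=-a^2e^{-ay}<0$. The term therefore has a favorable sign: move it to the left-hand side and drop it, which is exactly what the paper does in \eqref{1111}. If you prefer to estimate it, use $|\phi|\le \xi^{3/4}\|\phi_\xi\|_{L^4_\xi}$ instead. This gives a bound $\lesssim a^{1/2}\|\phi_\xi\|_{L^4_\xi}^2\|\phi_\xi\|_{L^2_\xi}^2$, which Young's inequality lets you absorb.

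\textbf{Comparison with the paper.} With that correction your argument goes through, but it is longer than necessary. The paper does not localize. It multiplies \eqref{eqphi} by $\phi\phi_\xi^2/u$ on the whole half-line, which is your integration-by-parts identity without the cutoff. It then controls both remaining terms by Young's inequality, using $\int\phi_X^2\lesssim\int\phi_X^2/u$ and the single global bound
\[
\|\phi/u\|_{L^\infty_\xi}\lesssim a^{-1/2}\|\phi_\xi\|_{L^2_\xi}+a^{1/2}\|\phi\|_{L^2_\xi}.
\]
Near the wall this comes from $|\phi|\le\xi^{1/2}\|\phi_\xi\|_{L^2_\xi}$ and $u\sim\sqrt{a\xi}$; away from it, from $u\sim1$ and the one-dimensional Agmon inequality. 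So the paper never needs the far-field Gagliardo--Nirenberg step, an $L^2$ bound for $\phi_{\xi\xi}$ read off from the equation, or the cutoff-derivative terms.

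Your far-field route is nevertheless valid. There $u\sim1$ and $A\lesssim a^2$, and $\sup_{\xi\ge1/a}\phi_\xi^2\le 2\|\phi_\xi\|_{L^2}\|\phi_{\xi\xi}\|_{L^2(\xi\ge1/a)}$ gives the required four decaying factors. Your near-wall handling of the $\phi_X$ and $a\phi_\xi$ terms is the paper's, except that you pair $\|\phi/\sqrt u\|_{L^\infty}$ with $\|\phi_X/\sqrt u\|_{L^2}$ rather than $\|\phi/u\|_{L^\infty}$ with $\|\phi_X\|_{L^2}$.
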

\begin{proof}
	Multiplying  \eqref{eqphi} by $\dfrac{1}{u}\phi\phi_{\xi}^2(1+X)^{2m} $, one has
	\begin{equation}\label{1111}
		\begin{aligned}
			\frac13\int \phi_{\xi}^4(1+X)^{2m} + \int \frac{A}{u}\phi^2\phi_{\xi}^2(1+X)^{2m}
			=& -\int \frac{1}{u}\phi_X\phi\phi_{\xi}^2(1+X)^{2m}+ a\int \frac{1}{u}\phi\phi_{\xi}^3(1+X)^{2m}.
		\end{aligned}
	\end{equation}
	
Noting Lemma \ref{u0simbaru}, one gets
\begin{equation}\label{1113}
	\begin{aligned}
		\bigg|\frac{\phi}{u}(X,\xi)\bigg| \lesssim&\frac{|\phi(X,\xi)|}{\sqrt{a\xi}} + |\phi(X,\xi)| 
		\lesssim \frac{1}{\sqrt{a}}\|\phi_{\xi}\|_{L_{\xi}^2}+\|\phi\|_{L_{\xi}^2}^{\frac{1}{2}}\|\phi_\xi\|_{L_{\xi}^2}^{\frac{1}{2}} 
		\lesssim \frac{1}{\sqrt{a}}\|\phi_{\xi}\|_{L_{\xi}^2}+\sqrt{a}\|\phi\|_{L_\xi^2}.
	\end{aligned}
\end{equation}
which yields that
	\begin{align}\label{1112}
		\bigg| \int \frac{1}{u}\phi_X\phi\phi_{\xi}^2 \bigg|
		\leq \frac{1}{12} \int \phi_{\xi}^4 + C \bigg\|\frac{\phi}{u}\bigg\|_{L_{\xi}^{\infty}}^2\int \phi_X^2\leq \frac{1}{12} \int \phi_{\xi}^4 + C_a \left(\|\phi_{\xi}\|_{L_{\xi}^2}^2+\|\phi\|_{L_\xi^2}^2\right)\int \phi_X^2,
	\end{align}
%
and
	\begin{equation}\label{1115}
		\begin{aligned}
				a\bigg| \int \frac{1}{u}\phi\phi_{\xi}^3 \bigg|
			\leq& \frac{1}{12} \int \phi_{\xi}^4 + 3a^2\bigg\|\frac{\phi}{u}\bigg\|_{L_{\xi}^{\infty}}^2 \int \phi_{\xi}^2
			\leq \frac{1}{12} \int \phi_{\xi}^4+C_a \left(\|\phi_{\xi}\|_{L_{\xi}^2}^2+\|\phi\|_{L_\xi^2}^2\right)\int \phi_{\xi}^2.
		\end{aligned}
	\end{equation}
	
Substituting \eqref{1112}-\eqref{1115} into \eqref{1111} and applying Lemmas \ref{RATE5}--\ref{RATE10}, one obtains
	\begin{equation}
		\begin{aligned}
			\int \phi_{\xi}^4(1+X)^{2m}
			\lesssim& C_{m,a}\left( \|(1+\xi)^{\frac{m}{2}}\phi_{0}\|_{\mathbf{Y}_{0,0}}^4+ \|\phi_{0}\|_{\mathbf{Y}_{in}}^4\right).
		\end{aligned}
	\end{equation}	
	Therefore the proof of Lemma \ref{RATE11} is completed.
\end{proof}

\begin{proof}[\textbf{Proof of Theorem \ref{Main3}}]
For  $x,y\geq 0$, it holds that
\begin{align}\label{decay6}
		|u(x,y)-\bar{u}(y)|
		&\leq \big|u(x,y)-\bar{u}(y^*)\big| + \big|\bar{u}(y^*)-\bar{u}(y)\big| \nonumber\\
		&\leq \big|u(x,y)-\bar{u}(y^*)\big| + a e^{-a\hat{y}} |y-y^*|,
\end{align}
where
\begin{align*}
	y^*=\int_{0}^{\xi}\frac{1}{\bar{u}(y(\xi';\bar{u}))}d\xi', \quad
	\xi=\xi(x,y;u)=\int_{0}^{y}u(x,s)ds, \quad
	\hat{y}\in \big[\min\{y,y^*\},\max\{y,y^*\}\big].
\end{align*}

For the first term on RHS of \eqref{decay6}, noting $y=y(x,\xi;u)$ and $y^*=y(\xi;\bar{u})$,  and
applying Theorem \ref{Main2}, one has
\begin{equation}\label{decay7}
	\begin{aligned}
	&	\big|u(x,y)-\bar{u}(y^*)\big|\lesssim C_{m,a} \epsilon \left(\|(b_{m,a}+\xi)^{\frac{m}{2}}\phi_{0}\|_{\mathbf{Y}_{0,0}}+\|\phi_{0}\|_{\mathbf{Y}_{1,0}}\right)(1+x)^{-\frac{m}{2}}.
	\end{aligned}
\end{equation}

For the second term on RHS of \eqref{decay6}, noting
\begin{align*}
	|\phi(X,\xi)|\leq |\xi|^{\frac{3}{4}} \|\phi_\xi\|_{L_{\xi}^4},
\end{align*}
which, together with Lemma  \ref{baru}, yields that
\begin{align}\label{decay8}
			|y-y^*|
	\lesssim	& \int_{\frac{1}{a}}^{\max\{\frac{1}{a},\xi\}
	} \frac{\big|u\big(x,y(x,\xi';u)\big)-\bar{u}\big(y(\xi';\bar{u})\big)\big|}
		{\bar{u}\big(y(\xi';\bar{u})\big)\,u\big(x,y(x,\xi';u)\big)} d\xi' + \int_{0}^{\frac{1}{a}} \frac{\big|u\big(x,y(x,\xi';u)\big)-\bar{u}\big(y(\xi';\bar{u})\big)\big|}
		{\bar{u}\big(y(\xi';\bar{u})\big)\,u\big(x,y(x,\xi';u)\big)} d\xi' \nonumber \\
		\lesssim& \xi \, \big\|u\big(x,y(x,\xi';u)\big)-\bar{u}\big(y(\xi';\bar{u})\big)\big\|_{L_{\xi'}^\infty}
		+ \frac{\epsilon}{a^{\frac{3}{2}}}\int_{0}^{\frac{1}{a}} \frac{|\phi|}{\xi'^{\frac{3}{2}}} d\xi' \nonumber\\
		\lesssim& \xi \, \big\|u\big(x,y(x,\xi';u)\big)-\bar{u}\big(y(\xi';\bar{u})\big)\big\|_{L_{\xi'}^\infty}
		+ \frac{\epsilon}{a^{\frac{7}{4}}}  \|\phi_\xi\|_{L_{\xi}^4}.
	\end{align}
Using  \eqref{decay8} and Lemma \ref{equi2}, one obtains
	\begin{align}\label{decay9}
				a e^{-a\hat{y}} |y-y^*|
			&\lesssim a\xi e^{-Ca y(\xi;\bar{u})}
			\big\|u\big(x,y(x,\xi';u)\big)-\bar{u}\big(y(\xi';\bar{u})\big)\big\|_{L_{\xi'}^\infty}
			+ \frac{\epsilon}{a^{\frac{3}{4}}}  \|\phi_\xi\|_{L_{\xi}^4} \nonumber \\
			&\lesssim  \big\|u\big(x,y(x,\xi;u)\big)-\bar{u}\big(y(\xi;\bar{u})\big)\big\|_{L_{\xi}^\infty}
			+ \frac{\epsilon}{a^{\frac{3}{4}}}  \|\phi_\xi\|_{L_{\xi}^4},
	\end{align}
where we have used Lemma \ref{baru} to derive
\begin{align*}
	\xi e^{-Ca y(\xi;\bar{u})}
	\lesssim \xi e^{-C \sqrt{a\xi}} \mathbf{1}_{\xi\leq 1/a} + \xi e^{-C a\xi} \mathbf{1}_{\xi\geq 1/a}
	\lesssim \frac{1}{a}.
\end{align*}

Substituting \eqref{decay7} and \eqref{decay9} into \eqref{decay6} and using Lemma \ref{RATE11}, we obtain
\begin{align}\label{decay10}
	|u(x,y)-\bar{u}(y)|\leq \epsilon  C_{m,a}\left( \|(1+\xi)^{\frac{m}{2}}\phi_{0}\|_{\mathbf{Y}_{0,0}}+ \|\phi_{0}\|_{\mathbf{Y}_{in}}\right)(1+x)^{-\frac{m}{2}}.
\end{align}
Therefore the proof of Theorem \ref{Main3} is completed.
\end{proof}

	\section{Appendix A}
	\begin{lem}
		Under the assumptions  of Theorem \ref{Main}, it holds that
			\begin{align}\label{a3}
			|y(\xi;u_0)-y(\xi;\bar{u})|\lesssim \frac{\epsilon}{a}.
		\end{align}
	\end{lem}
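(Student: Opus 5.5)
We prove the uniform bound $|y(\xi;u_0)-y(\xi;\bar{u})|\lesssim \epsilon/a$ for all $\xi\ge 0$ by reusing the two-regime analysis from Step 1 of the proof of Lemma \ref{u0simbaru}, splitting at $\xi=1/a$.

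\textbf{Regime $\xi\ge 1/a$.} The bound is essentially already available. By \eqref{initial2} and \eqref{inequality}, $y(\xi;u_0)$ is pinned between $\xi^{-1}(\xi\pm M\frac{a\epsilon}{(1+a)^2};\bar u)$. The derivative of the inverse stream function is
\[
(\xi^{-1})'=\frac{1}{\bar u(y(\xi;\bar u))}.
\]
It is bounded by an absolute constant for $\xi\ge \frac{1}{2a}$, by \eqref{inequality2} and Lemma \ref{baru}. The mean value theorem then gives \eqref{initial3}, namely
\[
|y(\xi;u_0)-y(\xi;\bar u)|\lesssim \frac{a\epsilon}{(1+a)^2}.
\]
Since $\frac{a}{(1+a)^2}\le \frac1a$, this is at most a constant times $\frac{\epsilon}{a}$.

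One caveat: the shifted argument $\xi-M\frac{a\epsilon}{(1+a)^2}$ must stay in $\{\xi\ge \frac{1}{2a}\}$. This holds because
\[
\frac{a\epsilon}{(1+a)^2}\le \frac{\epsilon}{4}\cdot\frac{4a}{(1+a)^2}\lesssim \frac{\epsilon}{a}\ll \frac{1}{2a}
\]
for $\epsilon$ small.

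\textbf{Regime $\xi\le 1/a$.} Here we use \eqref{initial9}:
\[
|y(\xi;u_0)-y(\xi;\bar u)|\lesssim \frac{\epsilon}{(1+a)^2}\,y(\xi;\bar u).
\]
By Lemma \ref{baru}, $a\,y(\xi;\bar u)\sim\sqrt{a\xi}\lesssim 1$ on this range, so $y(\xi;\bar u)\lesssim 1/a$. Hence the difference is $\lesssim \frac{\epsilon}{a(1+a)^2}\le \frac{\epsilon}{a}$.

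For a proof independent of \eqref{initial9}, we would rederive it as follows. Estimate \eqref{initial7.5} gives $y(\xi;u_0)=y(\xi;\bar u)(1+O(\epsilon))$, which already yields $|y(\xi;u_0)-y(\xi;\bar u)|\lesssim \epsilon\, y(\xi;\bar u)\lesssim \epsilon/a$. This requires only that the constant $M_1$ in \eqref{initial7} is independent of $a$, which was established there.

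\textbf{Main point to check.} The key issue is that every implicit constant is independent of $a$. In the first regime this rests on $\bar u\gtrsim 1$ uniformly for $\xi\ge\frac{1}{2a}$, i.e.\ $ay\ge C_1$. In the second it rests on the uniformity of $M_1$. Both were verified in Lemma \ref{u0simbaru}, so the claim follows by combining the two regimes.
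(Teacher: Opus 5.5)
Your proposal is correct and follows the paper's proof. The paper simply combines \eqref{initial3} on $\{\xi\ge 1/a\}$ with \eqref{initial9} on $\{\xi\le 1/a\}$. You additionally spell out the two elementary conversions the paper leaves implicit, namely $\frac{a}{(1+a)^2}\le\frac{1}{a}$, and $y(\xi;\bar{u})\lesssim\frac{1}{a}$ for $\xi\le\frac{1}{a}$ (via Lemma \ref{baru}).
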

	\begin{proof}
			It follows from \eqref{initial3} and \eqref{initial9} in the proof of Lemma \ref{u0simbaru} that \eqref{a3} holds.
			Therefore the proof of Lemma \ref{a3} is completed.
	\end{proof}
	
			\begin{lem}\label{equi2}
			Under the assumptions  of Theorem \ref{Main}, it holds that
			\begin{align}\label{a33}
				\quad	&y(X,\xi;u)\sim y(\xi;\bar{u}).
			\end{align}
		\end{lem}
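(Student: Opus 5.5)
We have to prove $y(X,\xi;u)\sim y(\xi;\bar u)$ uniformly on $\mathbb{R}^+\times\mathbb{R}^+$. The plan is to write both inverse maps as integrals in $\xi$ and compare the integrands pointwise, using Lemma \ref{u0simbaru}.

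Since $\partial_\xi y(X,\xi;u)=1/u(X,y(X,\xi;u))$ and $y(X,0;u)=0$, we have
\begin{align*}
y(X,\xi;u)=\int_0^\xi \frac{d\xi'}{u\big(X,y(X,\xi';u)\big)},\qquad
y(\xi;\bar u)=\int_0^\xi \frac{d\xi'}{\bar u\big(y(\xi';\bar u)\big)}.
\end{align*}
The second identity is exactly the definition of $y^*$ used in the proof of Theorem \ref{Main3}. Lemma \ref{u0simbaru} gives $u(X,y(X,\xi';u))\sim\bar u(y(\xi';\bar u))$ for all $(X,\xi')$, with constants independent of $X$, $\xi'$ and $a$. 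This comes from $|\phi|\lesssim\bar u^2$ and $\epsilon\ll1$, which give $u^2=\bar u^2+\epsilon\phi\in[\tfrac12\bar u^2,2\bar u^2]$. The two integrands are therefore comparable pointwise, and integrating over $[0,\xi]$ yields $y(X,\xi;u)\sim y(\xi;\bar u)$.

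The one point to check is that both integrals converge near $\xi'=0$. By Lemma \ref{baru}, $\bar u(y(\xi';\bar u))\sim\sqrt{a\xi'}$ for $\xi'\le 1/a$, so the integrand behaves like $(a\xi')^{-1/2}$ and is integrable. The comparison then transfers integrability to the $u$-integral as well.

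The legitimacy of the integral representation for $y(X,\cdot;u)$ is the only real (mild) obstacle. It needs $u>0$ for $y>0$ and the diffeomorphism property of the von Mises map, both of which Theorem \ref{oleinik} and the discussion after \eqref{von} supply. I expect no further difficulty: the statement is a direct corollary of Lemma \ref{u0simbaru}.
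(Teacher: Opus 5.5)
Your proposal is correct and follows the paper's proof: the paper also writes $y(X,\xi;u)$ and $y(\xi;\bar{u})$ as $\xi$-integrals of the reciprocal velocities, then compares the integrands pointwise using Lemma~\ref{u0simbaru}. Your extra checks (integrability near $\xi'=0$ and justifying the integral representation) are harmless additions that the paper leaves implicit.
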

		\begin{proof}
			We note that \begin{align*}
				y(X,\xi;u)=\int_{0}^{\xi}\frac{1}{u(X,y(X,\xi';u))}d\xi' \quad and \quad y(\xi;\bar{u})=\int_{0}^{\xi}\frac{1}{\bar{u}(y(\xi';\bar{u}))}d\xi', 
			\end{align*}then it follows from Lemma \ref{u0simbaru} that the proof of Lemma \ref{equi2} is completed.
		\end{proof}

		\begin{lem}\label{physical u0 sim baru}
			Under the assumptions \eqref{condition}  in Theorem \ref{Main}, we have \begin{align}\label{initial11}
				u_0(y)\sim \bar{u}(y) \quad for \quad y\geq 0.
			\end{align}
		\end{lem}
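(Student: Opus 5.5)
The goal is to show $u_0(y)\sim\bar u(y)$ for all $y\ge 0$, directly in physical coordinates. We write $u_0=\bar u+(u_0-\bar u)$ and show that $|u_0(y)-\bar u(y)|\le C\epsilon\,\bar u(y)$ with $C$ independent of $a$. Smallness of $\epsilon$ then gives $\tfrac12\bar u\le u_0\le\tfrac32\bar u$. As in Lemma \ref{u0simbaru}, we split into the regions $ay\le 1$ and $ay\ge 1$, because $\bar u$ behaves like $ay$ in the first and is comparable to $1$ in the second.

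\emph{Region $ay\le 1$.} Since $u_0(0)=\bar u(0)=0$, the mean value theorem and \eqref{condition} give
\begin{align*}
|u_0(y)-\bar u(y)|\le \|\partial_y(u_0-\bar u)\|_{L^\infty_y}\,y\le \frac{a\epsilon}{(1+a)^2}\,y\le \epsilon\, ay .
\end{align*}
By \eqref{inequality2}, for $ay\le 1$ we have $\bar u(y)=1-e^{-ay}\ge \frac{ay}{1+ay}\ge \frac{ay}{2}$. Hence $|u_0-\bar u|\le 2\epsilon\,\bar u$ in this region.

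\emph{Region $ay\ge 1$.} Here $\bar u(y)\ge 1-e^{-1}$. It therefore suffices to bound $\|u_0-\bar u\|_{L^\infty_y}$ by $C\epsilon$. We use the one-dimensional Gagliardo--Nirenberg inequality $\|f\|_{L^\infty}^2\le 2\|f\|_{L^2}\|f'\|_{L^2}$ on the half-line, for $f=u_0-\bar u\in H^1$. Together with \eqref{condition}, this gives $\|u_0-\bar u\|_{L^\infty_y}\lesssim \|u_0-\bar u\|_{H^1_y}\le \frac{a\epsilon}{(1+a)^2}\le\epsilon$. An alternative is to write $|f(y)|\le \int_0^y|f'|$, but that produces an unbounded factor of $y$. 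The $H^1$ route avoids this, and it is the same step used in \eqref{initial4}. So $|u_0-\bar u|\lesssim\epsilon\lesssim \epsilon\,\bar u$ for $ay\ge1$.

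Combining the two regions and taking $\epsilon$ small gives \eqref{initial11}. The main point needing care is that all constants stay independent of $a$. In the near-wall region this is handled by the factor $\frac{a}{(1+a)^2}\le a$ in \eqref{condition}, which produces exactly the linear vanishing $ay$ of $\bar u$. In the far region we only need $\frac{a}{(1+a)^2}\le 1$, so no further obstacle is expected there.
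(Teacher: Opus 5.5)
Your proof is correct and follows essentially the same route as the paper. Both arguments use the Sobolev/Gagliardo--Nirenberg bound $|u_0-\bar u|\lesssim\|u_0-\bar u\|_{L^2_y}^{1/2}\|\partial_y(u_0-\bar u)\|_{L^2_y}^{1/2}\lesssim\epsilon$ in the region $ay\ge 1$, and the mean value bound $|u_0-\bar u|\le\|\partial_y(u_0-\bar u)\|_{L^\infty_y}\,y\lesssim\epsilon a y$ in the region $ay\le 1$; these are compared against $\bar u\gtrsim 1$ and $\bar u\gtrsim ay$ respectively. Your version only makes the $a$-independent constants explicit (for example $\bar u\ge ay/2$ via \eqref{inequality2}), which the paper leaves implicit.
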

		\begin{proof}
			Using  \eqref{condition}, Sobolev embedding and the mean value theorem, we  have\begin{equation}\label{initial12}
				\begin{aligned}
					&|u_0(y)-\bar{u}(y)|\leq	 \|u_0-\bar{u}\|_{L_{y}^2}^{\frac{1}{2}}\|\partial_{y}(u_0-\bar{u})\|_{L_{y}^2}^{\frac{1}{2}}\lesssim \epsilon, \quad \forall  y\geq 0,\\
					&	|u_0(y)-\bar{u}(y)|\leq \|\partial_{y}(u_0-\bar{u})\|_{L_{y}^{\infty}}y
					\lesssim \epsilon ay ,  \quad \forall y\geq 0.
				\end{aligned}
			\end{equation} 
			Noting that $\bar{u}(y)\gtrsim 1$ for $y\geq \frac{1}{a}$ and $\bar{u}(y)\gtrsim a y$ for $y\leq \frac{1}{a}$, then we have from \eqref{initial12} that
			\[
			|u_0(y)-\bar{u}(y)|\lesssim \epsilon\,\bar{u}(y),\quad \forall\,y\geq 0,
			\]
			which  concludes    \eqref{initial11}. Therefore the proof of Lemma \ref{physical u0 sim baru} is completed.
		\end{proof}

		\begin{lem}\label{phi0}
			Under the assumptions \eqref{condition} in Theorem \ref{Main}, we have \begin{align}\label{a1}
						&	\|\phi_{0}\|_{\mathbf{Y}_{0,0}}^{2} \lesssim \frac{1}{a},\\
					&		\|\phi_{0}\|_{\mathbf{Y}_{1,0}}^2\lesssim a. \label{a2}
			\end{align}
		\end{lem}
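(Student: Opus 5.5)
We work directly with the definitions \eqref{phi_0} and \eqref{F0,0}, writing $\epsilon\phi_0 = u_0^2(y(\xi;u_0))-\bar u^2(y(\xi;\bar u))$. The first ingredient is the pointwise bound obtained in Step 1 of the proof of Lemma \ref{u0simbaru}, namely \eqref{initial5} and \eqref{initial9.5}. We also use the fact that $u_0(y(\xi;u_0))\sim\bar u(y(\xi;\bar u))$; this follows from Lemma \ref{physical u0 sim baru} together with \eqref{initial3} and \eqref{initial9}.

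\textbf{Estimate \eqref{a1}.} Using $u_0\sim\bar u$, we get
\[
|\phi_0|\lesssim \epsilon^{-1}\,\bar u\,\bigl|u_0(y(\xi;u_0))-\bar u(y(\xi;\bar u))\bigr| .
\]
Hence
\[
\|\phi_0\|_{\mathbf Y_{0,0}}^2\lesssim \epsilon^{-2}\int \frac{|u_0(y(\xi;u_0))-\bar u(y(\xi;\bar u))|^2}{\bar u^{2\lambda-2}}\,d\xi .
\]
We split the integral at $\xi=1/a$.

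For $\xi\le 1/a$, \eqref{initial9.5} gives an integrand of size $\lesssim (\epsilon a y)^2\,\bar u^{2-2\lambda}$. By Lemma \ref{baru}, $\bar u\sim ay\sim\sqrt{a\xi}$ there. Since $2\lambda-2<1$, the integrand is $\lesssim \epsilon^2 (a\xi)^{2-\lambda}$, and integrating over $[0,1/a]$ gives $\lesssim \epsilon^2/a$.

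For $\xi\ge 1/a$ the weight is harmless because $\bar u\sim 1$. We change variables back to $y$: the Jacobian is $d\xi = u_0\,dy\le dy$. We split the difference as in \eqref{initial4}. The term $|u_0-\bar u|(y(\xi;u_0))$ contributes $\|u_0-\bar u\|_{L^2_y}^2\lesssim a^2\epsilon^2/(1+a)^4\lesssim \epsilon^2/a$. The term $|\bar u(y(\xi;u_0))-\bar u(y(\xi;\bar u))|$ is bounded by $a e^{-a\min y}\,|y(\xi;u_0)-y(\xi;\bar u)|$, using \eqref{initial3}. The exponential factor makes it square-integrable, with contribution $\lesssim a^2\cdot\frac{1}{a}\cdot\bigl(a\epsilon/(1+a)^2\bigr)^2\lesssim\epsilon^2/a$. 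Combining the two regions yields \eqref{a1}.

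\textbf{Estimate \eqref{a2}.} We use the formula
\[
\epsilon\phi_0' = 2\bigl(u_0'(y(\xi;u_0))-\bar u'(y(\xi;\bar u))\bigr),
\]
and write the difference as
\[
\bigl(u_0'-\bar u'\bigr)(y(\xi;u_0)) \;+\; \bigl[\bar u'(y(\xi;u_0))-\bar u'(y(\xi;\bar u))\bigr].
\]
For the first piece, change variables $d\xi=u_0\,dy$ with $u_0\le C$; this gives $\lesssim \|\partial_y(u_0-\bar u)\|_{L^2}^2\lesssim \epsilon^2 a^2/(1+a)^4$.

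For the second piece, $|\bar u''|=a^2e^{-ay}$, and by the mean value theorem it is $\lesssim a^2 e^{-c a y(\xi;\bar u)}\,|y(\xi;u_0)-y(\xi;\bar u)|$. Two regions arise:
\begin{itemize}
\item For $\xi\le1/a$, \eqref{initial9} gives the bound $\epsilon a^2 y\cdot(1+a)^{-2}$, whose square integrates over $d\xi\sim ay\,dy$ on $y\lesssim 1/a$ to $\lesssim \epsilon^2 a$.
\item For $\xi\ge1/a$, we combine \eqref{initial3} with the exponential decay, $\int e^{-2cay}dy\sim 1/a$, which gives $\lesssim a^4\cdot\frac{1}{a}\cdot\epsilon^2 a^2(1+a)^{-4}\lesssim\epsilon^2 a$.
\end{itemize}
Dividing by $\epsilon^2$ gives \eqref{a2}.

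\textbf{Main obstacle.} The hardest part is tracking the powers of $a$ uniformly, especially near $\xi=0$, where the weight $u_0^{-2\lambda}$ is singular. There the whole argument rests on the sharp linear-in-$y$ bound \eqref{initial9.5} and on $\lambda<3/2$, which keeps the weighted integral finite.
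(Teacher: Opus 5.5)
Your proof is correct, and its skeleton matches the paper's. You split at $\xi=1/a$. You treat the same-point difference $(u_0-\bar u)(y(\xi;u_0))$, or its $y$-derivative, by changing variables $d\xi=u_0\,dy$ and using $\|u_0-\bar u\|_{H^1_y}$. You treat the shifted-point difference by the mean value theorem, together with \eqref{initial3}, \eqref{initial9} and the exponential decay of $\bar u'$ and $\bar u''$.

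The real difference is near the wall in \eqref{a1}:
\begin{itemize}
\item You factor $u_0^2-\bar u^2$ and invoke the pointwise bound \eqref{initial9.5}, equivalently \eqref{initial0}, which gives $|\phi_0|\lesssim \bar u^2$. Then $|\phi_0|^2u_0^{-2\lambda}\lesssim \bar u^{4-2\lambda}$ is bounded, and the singular weight simply disappears.
\item The paper instead splits $\epsilon\phi_0$ on $[0,1/a]$ into two pieces. The piece $u_0^2(y(\xi;u_0))-\bar u^2(y(\xi;u_0))$ is handled by changing variables, using $\|u_0-\bar u\|_{L^2_y}$ and the power count $a^{3-\lambda}y^{3-2\lambda}\lesssim a^{\lambda}$ on $y\lesssim 1/a$. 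The piece $\bar u^2(y(\xi;u_0))-\bar u^2(y(\xi;\bar u))$ is handled via $\bar u^2\sim a\xi$, $\int_0^{1/a}\xi^{1-\lambda}\,d\xi$ and \eqref{a3}.
\end{itemize}
Both routes give the same $1/a$. Yours is shorter because it reuses what Lemma \ref{u0simbaru} already proved.

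One consequence is that your closing remark misattributes the role of $\lambda<\frac32$. In your argument the near-wall integrand is $\epsilon^2(a\xi)^{2-\lambda}$, which is integrable on $[0,1/a]$ for any $\lambda<3$. It is the paper's $L^2$-based power count that uses $\lambda\le\frac32$.

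For \eqref{a2} you supply the computation that the paper only sketches as a ``similar argument''. Your powers of $a$ check out: both the $H^1$ contribution and the two mean-value contributions are $\lesssim \epsilon^2 a$.

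A small slip: $u_0$ need not be $\le 1$, so write $d\xi=u_0\,dy\lesssim dy$, using $u_0\le 1+C\epsilon$.
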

		\begin{proof}
		We divide the proof into two steps.
		
{\it Step 1.}	
We first aim to prove 	\eqref{a1},	using Lemma \ref{baru} and Lemma \ref{u0simbaru}, we have 
			\begin{equation}\label{a4}
				\begin{aligned}
						\int|\phi_{0}(\xi)|^{2}\frac{1}{u_{0}^{2\lambda}}\mathrm{d}\xi
					\lesssim\int_{0}^{\frac{1}{a}}  |\phi_{0}(\xi)|^{2} \frac{1}{a^\lambda\xi^{\lambda}}\mathrm{d}\xi
					+ \int_{\frac{1}{a}}^{\infty}  |\phi_{0}(\xi)|^{2}\mathrm{d}\xi
					:=\frac{1}{a^\lambda}I_1 +I_2.
				\end{aligned}
			\end{equation}
			\textit{Estimate of $I_1$}:
			It is clear that
			\begin{align}\label{a5}
					\epsilon^2\int_{0}^{\frac{1}{a}}  |\phi_{0}(\xi)|^{2} \frac{1}{\xi^{\lambda}}\mathrm{d}\xi
					\lesssim& \int_{0}^{\frac{1}{a}}  \big|u_{0}^2(y(\xi;u_0))-\bar{u}^2(y(\xi;u_0))\big|^{2} \frac{1}{\xi^{\lambda}}\mathrm{d}\xi  \nonumber\\
					&	+\int_{0}^{\frac{1}{a}}  \big|\bar{u}^2(y(\xi;u_0))-\bar{u}^2(y(\xi;\bar{u}))\big|^{2} \frac{1}{\xi^{\lambda}}\mathrm{d}\xi.
			\end{align}
	For the first term on RHS of \eqref{a5}, performing a change of variable and applying Lemma \ref{baru} and Lemma \ref{equi2}, one gets
			\begin{align}\label{a6}
					&\int_{0}^{\frac{1}{a}}  \big|u_{0}^2(y(\xi;u_0))-\bar{u}^2(y(\xi;u_0))\big|^{2} \frac{1}{\xi^{\lambda}}\mathrm{d}\xi  \nonumber\\
					=&\int_{0}^{y(\frac{1}{a};u_0)}  \big|u_{0}^2(y)-\bar{u}^2(y)\big|^{2} \frac{u_0(y)}{\xi(y;u_0)^{\lambda}}\mathrm{d}y  \nonumber\\
					\lesssim&\int_{0}^{M_5y(\frac{1}{a};\bar{u})}  \big|u_{0}^2(y)-\bar{u}^2(y)\big|^{2} \frac{u_0(y)}{\xi(y;u_0)^{\lambda}}\mathrm{d}y \nonumber\\
					\lesssim&\int_{0}^{\frac{M_6}{a}}  \big|u_{0}^2(y)-\bar{u}^2(y)\big|^{2} \frac{u_0(y)}{\xi(y;u_0)^{\lambda}}\mathrm{d}y.
			\end{align}
		Noting Lemma \ref{physical u0 sim baru}, we obtain
			\begin{align}\label{a7}
				\xi(y;u_0)=\int_{0}^{y}u_0(y')\mathrm{d}y'\sim \int_{0}^{y}\bar{u}(y')\mathrm{d}y'=\xi(y;\bar{u}).
			\end{align}
			Using Lemma \ref{baru}  and noting $1\leq\lambda<\frac{3}{2}$, one has
			\begin{align}\label{a8}
						\big|u_{0}^2(y)-\bar{u}^2(y)\big|^{2} \frac{u_0(y)}{\xi(y;u_0)^{\lambda}}
					&\lesssim \big|u_0(y)-\bar{u}(y)\big|^2\frac{\bar{u}^3(y)}{\xi(y;\bar{u})^{\lambda}} \nonumber\\
					&\lesssim \big|u_0(y)-\bar{u}(y)\big|^2a^{3-\lambda}y^{3-2\lambda}\nonumber \\
					&\lesssim \big|u_0(y)-\bar{u}(y)\big|^2a^\lambda, \quad for \ y\in[0, \frac{M_6}{a}].
			\end{align}
			Substituting  \eqref{a8} into \eqref{a6},	it  follows that
			\begin{align}\label{a9}
			\int_{0}^{\frac{1}{a}}  \big|u_{0}^2(y(\xi;u_0))-\bar{u}^2(y(\xi;u_0))\big|^{2} \frac{1}{\xi^{\lambda}}\mathrm{d}\xi
				\lesssim a^{\lambda} \big\|u_0-\bar{u}\big\|_{L_{y}^2}^2
				\lesssim \frac{a^{\lambda+2}}{(1+a)^4}\epsilon^2.
			\end{align}
			For the second term on RHS of \eqref{a5}, 
		we have from Lemma \ref{baru} and Lemma \ref{u0simbaru} that 
			\begin{align}\label{a10}
					&\int_{0}^{\frac{1}{a}}  \big|\bar{u}^2(y(\xi;u_0))-\bar{u}^2(y(\xi;\bar{u}))\big|^{2} \frac{1}{\xi^{\lambda}}\mathrm{d}\xi  \nonumber\\
					\lesssim& \int_{0}^{\frac{1}{a}}  \big|\bar{u}(y(\xi;u_0))+\bar{u}(y(\xi;\bar{u}))\big|^{2} \frac{1}{\xi^{\lambda}}\mathrm{d}\xi \cdot \big\|\bar{u}(y(\xi;u_0))-\bar{u}(y(\xi;\bar{u}))\big\|_{L_{\xi}^\infty}^2  \nonumber\\
					\lesssim& \int_{0}^{\frac{1}{a}}   \frac{a^3}{\xi^{\lambda-1}}\mathrm{d}\xi \cdot \big\|y(\xi;u_0)-y(\xi;\bar{u})\big\|_{L_{\xi}^\infty}^2  \nonumber\\
					\lesssim& a^{1+\lambda}  \big\|y(\xi;u_0)-y(\xi;\bar{u})\big\|_{L_{\xi}^\infty}^2 
					\lesssim a^{\lambda-1}\epsilon^2.
			\end{align}
			Combining  \eqref{a9}  and \eqref{a10}, we conclude
			\begin{align}\label{a11}
			\frac{1}{a^{\lambda}}	I_1\lesssim \frac{1}{a}.
			\end{align}
			\textit{ Estimate of $I_2$}: It is clear that
			\begin{align*}
				\epsilon^2	\int_{\frac{1}{a}}^{\infty}  |\phi_{0}(\xi)|^{2}d\xi  
				\lesssim&  \int_{\frac{1}{a}}^{\infty}  |u_{0}^2(y(\xi;u_0))-\bar{u}^2(y(\xi;u_0))|^{2} d\xi +\int_{\frac{1}{a}}^{\infty}  |\bar{u}^2(y(\xi;u_0))-\bar{u}^2(y(\xi;\bar{u}))|^{2} d\xi .
			\end{align*}
			By changing the variable once again and using the fact that $u_0$ is uniformly bounded, one has
			\begin{align*}
				\int_{\frac{1}{a}}^{\infty}  |u_{0}^2(y(\xi;u_0))-\bar{u}^2(y(\xi;u_0))|^{2} d\xi =&
				\int_{y(\frac{1}{a};u_0)}^{\infty}  |u_{0}^2(y)-\bar{u}^2(y)|^{2}u_0(y)dy\\
				\lesssim& 	\int  |u_{0}(y)-\bar{u}(y)|^{2}dy\lesssim \frac{a^2}{(1+a)^4}\epsilon^2.
			\end{align*}
		Noting $\partial_y\bar{u}= ae^{-ay}$, Lemma \ref{baru}, Lemma \ref{equi2} and \eqref{a3}, one has that
			\begin{align*}
				 \int_{\frac{1}{a}}^{\infty}  |\bar{u}^2(y(\xi;u_0))-\bar{u}^2(y(\xi;\bar{u}))|^{2} d\xi 
				&\lesssim    \int_{\frac{1}{a}}^{\infty}    |\bar{u}(y(\xi;u_0))-\bar{u}(y(\xi;\bar{u}))|^{2}  d\xi\\
				&\lesssim\int_{\frac{1}{a}}^{\infty}a^2e^{-M_7a\xi} \|y(\xi;u_0)-y(\xi;\bar{u})\|_{L_{\xi}^\infty}^2 d\xi
				\lesssim \frac{\epsilon^2}{a}.
			\end{align*}
			Thus \begin{align*}
				I_2\lesssim \frac{1}{a}.
			\end{align*}

{\it Step 2.} For \eqref{a2}, we denote 
\begin{align*}
I_3:=\int_{0}^{\frac{1}{a}} |\phi_{0}'(\xi)|^2d\xi, \quad I_4:=\int_{\frac{1}{a}}^{\infty}|\phi_{0}'(\xi)|^2d\xi
\end{align*}
where
\begin{align*}
\epsilon	\phi_{0}'(\xi)=2[(\partial_{y}u_0)(y(\xi;u_0))-(\partial_{y}\bar{u})(y(\xi;\bar{u}))].
\end{align*}
By similar argument as  in {\it Step 1}, one gets  
\begin{align*}
I_3\lesssim a, \quad I_4\lesssim a.
\end{align*}
Therefore the proof of Lemma \ref{phi0} is completed.
\end{proof}

		\begin{lem}\label{phi0m}
			Under the assumptions \eqref{condition} and   \eqref{initcond4}, we have \begin{align}\label{b0}
			\int|\phi_{0}(\xi)|^{2}\frac{\xi^m}{u_{0}^{2\lambda}}d\xi\lesssim_m \frac{1}{a^{m+1}}+ \frac{1}{\epsilon^2}\|	y^{\frac{m}{2}}(u_0-\bar{u})(y)\|_{L_{y}^2}^2   .
			\end{align}
		\end{lem}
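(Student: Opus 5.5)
The plan is to split the integral at $\xi=1/a$, in the same way as in the proof of Lemma \ref{phi0}:
\[
\int|\phi_0|^2\frac{\xi^m}{u_0^{2\lambda}}\,d\xi=\int_0^{1/a}+\int_{1/a}^\infty=:J_1+J_2 .
\]

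\emph{The region $\xi\le 1/a$.} Here $\xi^m\le a^{-m}$. Pulling this factor out and using $\eqref{a1}$ from Lemma \ref{phi0} gives
\[
J_1\le a^{-m}\,\|\phi_0\|_{\mathbf{Y}_{0,0}}^2\lesssim a^{-m-1}.
\]

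\emph{The region $\xi\ge 1/a$.} By Lemma \ref{baru} and Lemma \ref{u0simbaru}, $u_0\sim1$ there, so the weight $u_0^{-2\lambda}$ can be dropped. Next, write $\epsilon\phi_0$ as two differences,
\[
\bigl[u_0^2(y(\xi;u_0))-\bar u^2(y(\xi;u_0))\bigr]+\bigl[\bar u^2(y(\xi;u_0))-\bar u^2(y(\xi;\bar u))\bigr].
\]

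For the first difference, change variables $\xi=\xi(y;u_0)$, so that $d\xi=u_0\,dy$. The relation \eqref{a7} together with Lemma \ref{baru} gives $\xi(y;u_0)\lesssim y$ for $y\gtrsim 1/a$, hence $\xi^m\lesssim_m y^m$. Since $u_0,\bar u$ are bounded, the first difference contributes at most
\[
\epsilon^{-2}\int y^m|u_0-\bar u|^2\,dy=\epsilon^{-2}\|y^{m/2}(u_0-\bar u)\|_{L^2_y}^2 .
\]

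For the second difference, apply the mean value theorem with $\bar u_y=ae^{-ay}$. By Lemma \ref{equi2} (applied at $X=0$), the intermediate point $\hat y$ satisfies $\hat y\sim y(\xi;\bar u)\sim\xi$. By \eqref{a3}, $|y(\xi;u_0)-y(\xi;\bar u)|\lesssim\epsilon/a$. This gives the pointwise bound $|\cdot|\lesssim \epsilon e^{-Ca\xi}$. The contribution is then bounded by
\[
\epsilon^{-2}\int_{1/a}^\infty \xi^m\epsilon^2e^{-2Ca\xi}\,d\xi\lesssim_m a^{-m-1}.
\]

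\emph{Expected obstacle.} The main care is in the first difference: justifying $\xi(y;u_0)\lesssim y$ uniformly in $a$ and $\epsilon$, and controlling the image of the lower limit $1/a$ under the change of variables. Both should follow from \eqref{a7} and \eqref{initial3}. Also, in the MVT step the exponential $e^{-Ca\xi}$ must carry an $a$-independent constant $C$; this comes from Lemma \ref{equi2} together with $y(\xi;\bar u)\gtrsim\xi$ for $\xi\ge1/a$.

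Summing $J_1$ and $J_2$ yields \eqref{b0}.
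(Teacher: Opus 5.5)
Your proposal is correct and follows the paper's proof essentially step for step. You split at $\xi=1/a$ and control the near-wall part through Lemma~\ref{phi0}; pulling out $\xi^m\le a^{-m}$ and invoking \eqref{a1} is exactly what the paper's ``in view of the proof of $I_1$'' amounts to. For $\xi\ge 1/a$ you drop $u_0^{-2\lambda}$, split $\epsilon\phi_0$ into the same two differences, and handle them by the change of variables $d\xi=u_0\,dy$ and by the mean value theorem with \eqref{a3} and Lemma~\ref{equi2}, respectively, just as the paper does. The obstacle you anticipate is not real: the integrand is nonnegative, so you may extend the $y$-integral to $[0,\infty)$, and $\xi(y;u_0)\le \|u_0\|_{L^\infty}\,y\lesssim y$ holds for every $y\ge 0$, with no restriction to $y\gtrsim 1/a$.
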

		\begin{proof}
			In view of the proof of $I_1$  in Lemma \ref{phi0}, we have
			\begin{align}\label{b1}
				\int_{0}^{\frac{1}{a}} |\phi_{0}(\xi)|^{2} \frac{\xi^m}{u_{0}^{2\lambda}}   \,\mathrm{d}\xi\lesssim\frac{1}{a^{m+1}}.
			\end{align}
		We note
			\begin{align}\label{b2}
						\epsilon^2\int_{\frac{1}{a}}^{\infty} |\phi_{0}(\xi)|^{2}\frac{\xi^m}{u_{0}^{2\lambda}}   \,\mathrm{d}\xi
					\lesssim&
					\int_{\frac{1}{a}}^{\infty} \big|u_{0}^2(y(\xi;u_0))-\bar{u}^2(y(\xi;u_0))\big|^{2}\xi^m \,\mathrm{d}\xi\nonumber\\
					&+\int_{\frac{1}{a}}^{\infty} \big|\bar{u}^2(y(\xi;u_0))-\bar{u}^2(y(\xi;\bar{u}))\big|^{2}\xi^m \,\mathrm{d}\xi.
			\end{align}
			For the first term on RHS of \eqref{b2}, performing a change of variable and  	applying \eqref{a7} and Lemma \ref{baru}, one gets
			\begin{align}\label{b3}
				\int_{\frac{1}{a}}^{\infty} \big|u_{0}^2(y(\xi;u_0))-\bar{u}^2(y(\xi;u_0))\big|^{2}\xi^m \,\mathrm{d}\xi
				&\lesssim \int \big|u_{0}(y)-\bar{u}(y)\big|^{2}y^m \,\mathrm{d}y.
			\end{align}
		For the second term on RHS of \eqref{b2}, 	applying Lemma \ref{baru}, Lemma \ref{equi2} and \eqref{a3}, one has that
			\begin{align}\label{b4}
					 \int_{\frac{1}{a}}^{\infty} \big|\bar{u}^2(y(\xi;u_0))-\bar{u}^2(y(\xi;\bar{u}))\big|^{2} \xi^m \,\mathrm{d}\xi 
					\lesssim& \int_{\frac{1}{a}}^{\infty} \big|\bar{u}(y(\xi;u_0))-\bar{u}(y(\xi;\bar{u}))\big|^{2} \xi^m \,\mathrm{d}\xi \nonumber\\
					\lesssim& \int_{\frac{1}{a}}^{\infty}
					a^2 e^{-M_7a\xi} \big|y(\xi;u_0)-y(\xi;\bar{u})\big|^2 \xi^m \,\mathrm{d}\xi
					\lesssim_m \frac{\epsilon^2}{a^{m+1}}.
			\end{align}
				Combining \eqref{b1}, \eqref{b3} and \eqref{b4} yields \eqref{b0}.
					Therefore the proof of Lemma \ref{phi0m} is completed.
		\end{proof}

		\begin{lem}\label{F2,0}
	For any sufficiently small $\delta_{a,m}\ll 1$, there exists a constant $\kappa_{a,m}>0$ such that
	if \eqref{initcond5} holds, then
	\begin{equation}\label{c0}
		\|(1+\xi)^{\frac{m}{2}}\phi_{0}\|_{\mathbf{Y}_{0,0}}+\|\phi_{0}\|_{\mathbf{Y}_{\mathrm{in}}}
		\leq \delta_{a,m}.
	\end{equation}
		\end{lem}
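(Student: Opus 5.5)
The plan is to bound each of the four pieces of the left side of \eqref{c0}, namely $\|(1+\xi)^{m/2}\phi_0\|_{\mathbf{Y}_{0,0}}$, $\|\phi_0\|_{\mathbf{Y}_{0,0}}$, $\|\phi_0\|_{\mathbf{Y}_{1,0}}$ and $\|\phi_0\|_{\mathbf{Y}_{2,0}}$, by a constant $C_{a,m}$ times the left side of \eqref{initcond5}. The smallness then follows by choosing $\kappa_{a,m}=\delta_{a,m}/(4C_{a,m})$. The key point is that \eqref{initcond5} is a smallness assumption that does \emph{not} carry the scaling factor $\epsilon$. So I will not reuse Lemmas \ref{phi0}, \ref{phi0m} verbatim, since they only give $O(1)$ bounds. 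Instead I will redo their arguments while keeping track of the actual size $\eta:=\|u_0-\bar u\|_{L^1_y}+\|u_0-\bar u\|_{H^2_y}+\|y^{m/2}(u_0-\bar u)\|_{L^2_y}$. Since $\phi_0=\epsilon^{-1}(u_0^2\circ y(\cdot;u_0)-\bar u^2\circ y(\cdot;\bar u))$ and $\epsilon$ is a fixed constant, it suffices to show that the unscaled numerators are $O_{a,m}(\eta)$ in the respective weighted norms.

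First I redo Lemma \ref{u0simbaru}, Steps 1.1--1.4, with $\eta$ in place of $\frac{a\epsilon}{(1+a)^2}$. By Sobolev embedding, $H^2_y\subset W^{1,\infty}_y$, so $\|\partial_y(u_0-\bar u)\|_{L^\infty}\lesssim\eta$ is available. The same argument then gives
\[
|y(\xi;u_0)-y(\xi;\bar u)|\lesssim_a \eta\,\min\{y(\xi;\bar u),1\},\qquad |u_0(y(\xi;u_0))-\bar u(y(\xi;\bar u))|\lesssim_a\eta\,\bar u(y(\xi;\bar u)).
\]
With these, the splittings in \eqref{a4}--\eqref{a10} and \eqref{b1}--\eqref{b4} go through with $\epsilon^2$ replaced by $\eta^2$. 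For the weighted term on $\xi\ge 1/a$, the change of variables reduces to $\int|u_0-\bar u|^2y^m\,dy\le\eta^2$ plus an exponentially weighted term $\lesssim_{a,m}\eta^2$. This controls both $\mathbf{Y}_{0,0}$ norms.

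For $\mathbf{Y}_{1,0}$, I write $\epsilon\phi_0'=2[(u_0-\bar u)_y(y(\xi;u_0))]+2[\bar u_y(y(\xi;u_0))-\bar u_y(y(\xi;\bar u))]$. Changing variables with $d\xi=u_0\,dy$ bounds the first piece by $\|\partial_y(u_0-\bar u)\|_{L^2}^2\le\eta^2$. The second piece is at most $\|\bar u_{yy}\|_{L^\infty}|y(\xi;u_0)-y(\xi;\bar u)|$, localized by $e^{-ay}$, hence $\lesssim_a\eta^2$ in $L^2_\xi$.

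The main obstacle is $\mathbf{Y}_{2,0}$, where the weight $1/u_0\sim (a\xi)^{-1/2}$ is singular at the wall and the expression involves $u_{0yy}$. I write $\epsilon\phi_X(0,\xi)=2g(y(\xi;u_0))+2[h(y(\xi;u_0))-h(y(\xi;\bar u))]$, where $g=a(u_0-\bar u)_y+(u_0-\bar u)_{yy}$ and $h=a\bar u_y+\bar u_{yy}$. For $\bar u$ one has $h\equiv0$, since $\bar u_{yy}=-a\bar u_y$. Hence only $g$ survives, and after the change of variables $\int g(y(\xi;u_0))^2u_0^{-1}\,d\xi=\int g(y)^2\,dy\lesssim(1+a)^2\eta^2$, because the Jacobian $u_0$ cancels the weight exactly. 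This also explains the role of the compatibility condition $u_0''+au_0'=O(y^2)$ in \eqref{monotony}: it guarantees $g(0)=0$ and finiteness, but the quantitative bound comes only from $H^2$. Summing the four pieces gives \eqref{c0}.
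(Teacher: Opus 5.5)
Your proposal is correct and follows essentially the same route as the paper. The paper likewise replaces the smallness parameter $\epsilon$ in the hypothesis by $\epsilon\kappa_{a,m}$, which amounts to tracking the true size of $u_0-\bar u$ as your $\eta$ does, reruns the estimates behind Lemmas \ref{u0simbaru}, \ref{phi0} and \ref{phi0m} to get bounds proportional to $\kappa_{a,m}$, and then takes $\kappa_{a,m}$ small. Your $\mathbf{Y}_{2,0}$ step, using $a\bar u_y+\bar u_{yy}\equiv 0$ and the exact cancellation of the weight $1/u_0$ against the Jacobian $d\xi=u_0\,dy$, is a cleaner explicit version of what the paper only asserts ``by the same argument as in the proof of Lemma \ref{phi0}''.
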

		\begin{proof}
			1. We replace $\epsilon$ with $\epsilon\kappa_{a,m}$, such that  
				\begin{align}\label{c1}
				\| u_0 - \bar{u} \|_{L_{y}^1}
				+  \big\| u_0 - \bar{u} \big\|_{H_{y}^2}
				+ \big\| y^{\frac{m}{2}}( u_0 - \bar{u}) \big\|_{L_{y}^2}
				\leq   \frac{a}{(1+a)^2}\epsilon\kappa_{a,m}.
			\end{align}
			Then it follows from Lemma \ref{a3}, Lemma \ref{phi0} and Lemma \ref{phi0m}  that 
			\begin{equation}\label{c2}
				\begin{aligned}
				&	|y(\xi;u_0)-y(\xi;\bar{u})|\lesssim \frac{\epsilon\kappa_{a,m}}{a},\\
				&	\|\phi_{0}\|_{\mathbf{Y}_{0,0}}^{2} \lesssim \frac{\kappa_{a,m}^2}{a},\\
				&	\|\phi_{0}\|_{\mathbf{Y}_{1,0}}^2\lesssim a\kappa_{a,m}^2,\\
			&\int|\phi_{0}(\xi)|^{2}\frac{\xi^m}{u_{0}^{2\lambda}}d\xi\lesssim C_{a,m}\kappa_{a,m}^2.
				\end{aligned}
			\end{equation}
			
2. By the same argument as in the proof of Lemma \ref{phi0}, we obtain
			\begin{align*}
				\|\phi_{0}\|_{\mathbf{Y}_{2,0}}^{2}
				&=\int_{0}^{+\infty}\frac{|\phi_{X}(0,\xi)|^2}{u_0}\, d\xi
				\leq C_{a}\kappa_{a,m}^2,
			\end{align*}
			where
			\begin{equation}
				\begin{aligned}
					\epsilon\phi_{X}(0,\xi)
					&= \bigl(2a u_{0y}\big(y(\xi;u_0)\big) + 2u_{0yy}\big(y(\xi;u_0)\big)\bigr) \\
					&\quad - \bigl(2a \bar{u}_{y}\big(y(\xi;\bar{u})\big) + 2\bar{u}_{yy}\big(y(\xi;\bar{u})\big)\bigr).
				\end{aligned}
			\end{equation}
			
			Taking $\kappa_{a,m}$ sufficiently small, we conclude \eqref{c0}.
				Therefore the proof of Lemma \ref{F2,0} is completed.
		\end{proof}

	\noindent{\bf Acknowledgments.}
	Yong Wang's research is partially supported by the National Key Research and Development Program of China, grant 2021YFA1000800; the National Natural Science Foundation of China, grants 12288201 and 12421001; and the CAS Project for Young Scientists in Basic Research, grant YSBR-031. 
	
	\medskip
	
	\noindent{\bf Conflict of Interest:} The authors declare that they have no conflict of interest. 
	
	\bigskip


\end{document}